\definecolor{myblue}{cmyk}{0.77, 0.23, 0.0, 0.37}
\definecolor{mycyan}{cmyk}{0.46, 0.0, 0.0, 0.22}
\definecolor{myyellow}{cmyk}{0.0, 0.12, 0.6, 0.0}
\definecolor{mydarkgray}{cmyk}{0.1, 0.0, 0.2, 0.78}
\definecolor{myred}{cmyk}{0.0, 0.6, 0.61, 0.05}
\definecolor{myviolet}{cmyk}{0.0, 0.76, 0.15, 0.5}
\titleformat{\paragraph}[runin]
{\normalfont\normalsize\bfseries}{\theparagraph}{1em}{}
\titleformat{\subparagraph}[runin]
{\normalfont\normalsize\bfseries}{\thesubparagraph}{1em}{}
\titlespacing*{\chapter} {0pt}{50pt}{40pt}
\titlespacing*{\section} {0pt}{3.5ex plus 1ex minus .2ex}{2.3ex plus .2ex}
\titlespacing*{\subsection} {0pt}{3.25ex plus 1ex minus .2ex}{1.5ex plus .2ex}
\titlespacing*{\subsubsection}{0pt}{3.25ex plus 1ex minus .2ex}{1.5ex plus .2ex}
\titlespacing*{\paragraph} {0pt}{3.25ex plus 1ex minus .2ex}{1em}
\titlespacing*{\subparagraph} {\parindent}{3.25ex plus 1ex minus .2ex}{1em}
\newtheorem{theorem}{Theorem}[section]
\newtheorem{lemma}[theorem]{Lemma}
\newtheorem{proposition}[theorem]{Proposition}
\newtheorem{corollary}[theorem]{Corollary}
\theoremstyle{definition}
\newtheorem{definition}[theorem]{Definition}
\newtheorem{notation}[theorem]{Notation}
\newtheorem{example}[theorem]{Example}
\theoremstyle{remark}
\newtheorem{remark}[theorem]{Remark}
\DeclareMathOperator{\Diag}{Diag}
\DeclareMathOperator{\ide}{Id}
\newcommand{\mm}{\widetilde{M}}
\newcommand{\ov}{\overline}
\newcommand{\ot}{\otimes}
\newcommand{\hs}{\hspace{-0.5pt}}
\newcommand{\xcirc}{\hs \circ \hs}
\numberwithin{equation}{section}
\begin{document}

\title[On the classification of graded twisted planes]{On the classification of graded twisted planes}

\author{Ricardo Bances$^{1}$}
\email{rbances@pucp.edu.pe}

\author{Christian Valqui$^{1,}$$^2$}
\address{$^1$Pontificia Universidad Cat\'olica del Per\'u, Secci\'on Matem\'aticas, PUCP, Av. Universitaria 1801, San Miguel, Lima 32, Per\'u.}
\address{$^2$Instituto de Matem\'atica y Ciencias Afines (IMCA) Calle Los Bi\'ologos 245. Urb San C\'esar.
La Molina, Lima 12, Per\'u.}
\email{cvalqui@pucp.edu.pe}

\thanks{Christian Valqui was supported by PUCP-DGI-2019-1-0015.}
\thanks{Data sharing not applicable to this article as no datasets were generated or analysed during the current study.}
\thanks{Corresponding author: Christian Valqui}
\subjclass[2010]{primary 16S35; secondary 16S38}
\keywords{Twisted tensor products; quadratic algebras}

\begin{abstract}
We use a representation of a graded twisted tensor product of $K[x]$ with $K[y]$ in $L(K^{\mathds{N}_0})$ in order
to obtain a nearly complete classification of these graded twisted tensor products via infinite matrices.
There is one particular example and
three main cases:  quadratic algebras classified in~\cite{CG2},
 a family called $A(n,d,a)$ with the $n+1$-extension property for $n\ge 2$, and a third case, not fully classified, which contains a
 family $B(a,L)$ parameterized by quasi-balanced sequences.
\end{abstract}

\maketitle

\tableofcontents

\section*{Introduction}
In~\cite{CSV} the authors introduced the notion of twisted tensor product of unital $K$-algebras, where $K$ is a unital ring.
We assume that $K$ is a field, and consider the
basic problem of classifying all twisted tensor products
of $A$ with $B$ for a given pair of algebras $A$ and $B$. In general this problem is out of reach, although some results have been obtained, mainly
for finite dimensional algebras (see~\cite{A}, \cite{AGGV},
 \cite{C}, \cite{GGV1}, \cite{JLNS} and~\cite{LN}).
In particular, in~\cite{GGV1} some families of twisted tensor products of $K[x]$ with $K[y]$ were found. The full classification of these tensor
products seems to be still out of reach, but
in~\cite{CG2} (see also~\cite{CG})  the graded twisted tensor products of $K[x]$ with $K[y]$ which yield quadratic algebras were completely
classified.
On the other hand the twisted tensor product of $K[x]/\langle x^n\rangle$ with an algebra $A$ can be represented in $M_n(A)$
(see~\cite{GGV2}*{Theorem 1.10}).
This representation can be generalized to finite dimensional algebras (see~\cite{A}).

In this article we start with a representation of a twisted tensor product of $K[x]$ with $K[y]$ in $L(K[x]^{\mathds{N}_0})$,
which is very similar to the representation in~\cite{GGV2}*{Theorem 1.10}. In the graded case this representation can be simplified further
to a representation of the graded twisted tensor product in the algebra $L(K^{\mathds{N}_0})$, embedded in the algebra of infinite matrices with entries in $K$. Thus we manage to translate the problem of classifying
all graded twisted tensor products of $K[x]$ with $K[y]$ into the problem of classifying infinite matrices with entries in $K$ satisfying certain
conditions (see Corollary~\ref{graded condition}).
With this method we show that all graded twisted tensor products of $K[x]$ with $K[y]$ can be classified into three main cases, except
 one particular example (see Proposition~\ref{caso particular}). The first case is the case of quadratic algebras, already classified in~\cite{CG2},
the second case yields a family called $A(n,d,a)$ and in the third case we have only  some partial classification results, and obtain a family called
$B(a,L)$.

One can describe a graded twisting tensor product of $K[x]$ with $K[y]$ by specifying how $y^k$ commutes with $x$, which means determining the
coefficients $a_i$'s in
\begin{equation}\label{commutation relations}
  y^kx=\sum_{i=0}^{k+1}a_i x^{k+1-i}y^{i}.
\end{equation}
For example, we obtain the quantum plane if the commutation relation is $yx=qxy$, and in Example~\ref{Ore} we explore the case $yx=bxy+ cy^2$.
\begin{definition}
  A graded twisting tensor product of $K[x]$ with $K[y]$ has the $n$-extension property, if the multiplicative structure
  of the algebra is determined completely by the commutation relations~\eqref{commutation relations}
  for $k=1,\dots,n-1$. For example, if  the relation $yx=ax^2+bxy+cy^2$ determines the multiplicative structure
  of the algebra, then the algebra has the $2$-extension property,
  and is called quadratic.
\end{definition}
In general,
the relation $yx=ax^2+bxy+cy^2$ yields a quadratic algebra, provided that $ac\ne 1$ and that $(b,ac)$ is not a root of any member of a certain family
of polynomials $Q_n(b,c)$. In this case our results match the results of~\cite{CG2}, which were obtained
with very different methods.

If $a\ne 0$, then such a
tensor product is equivalent to one with $a=1$, and so we will focus on the case $yx=x^2+bxy+cy^2$.
In the case $c=1$ one can show that necessarily $b=-1$ (see Lemma~\ref{lema clasifica}) and the resulting algebras are not quadratic, i.e., they do not have
 the 2-extension property.
We obtain a particular algebra with $y^kx=x^{k+1}-x^ky+y^{k+1}$ for all $k\in\mathds{N}$ (see Proposition~\ref{caso particular}).
This algebra does not have the $m$-extension property for any $m$.

For every graded twisted tensor product with $yx=x^2-xy+y^2$, which is not
the particular case mentioned above, there exists $n\ge 2$ such that
$$
  y^kx=x^{k+1}-x^ky+y^{k+1},\quad\text{for all $k<n$},\quad\text{and}\quad
  y^n x\ne x^{n+1}-x^ny+y^{n+1}.
$$
A central result is Proposition~\ref{clasificacion}, which shows that we have exactly two possibilities for $y^n x$. In the first case
$$
  y^nx=dx^{n+1}-dx^n y-a xy^n+(a+1)y^{n+1},
$$
for $a,d$ in $K$ satisfying certain conditions, namely, $(a,d)$ is not a root of any member of a certain family of polynomials $R_j(a,d)$.
 This yields a family
$A(n,d,a)$ of twisted tensor products which have the $(n+1)$-extension property (see sections~\ref{seccion 5} and~\ref{seccion 6}), which means that
the multiplication is determined by the commuting relations up to degree $n+1$.
The second case is treated in sections~\ref{seccion 7}, \ref{seccion familia} and~\ref{seccion 10},
and the commutation relation at degree $n+1$ is
$$
  y^nx=dx^{n+1}-x^n y+(a+1)y^{n+1},
$$
where $(a+1)d=1$.
Although the full classification is not achieved in this case, our methods show that one can achieve the classification of
all possible twisting maps up to any degree, with increasing amount of computational work.

 Moreover, we manage to find a family of twisted tensor products which we call $B(a,L)$, parameterized by
$a\in K\setminus\{0,-1\}$ and $L\in\mathcal{L}$, where $\mathcal{L}$ is the set of quasi-balanced sequences of positive integers (see
Definition~\ref{Def quasi balanced}). These sequences
are interesting on their own, for example they show a surprising connection to Euler's function $\varphi$. Every truncated quasi-balanced sequence
can be continued in several ways, which
implies that all members of the family $B(a,L)$ have not the $m$-extension property for any $m$.

The families that were found via the partial classification in the present article could be the smallest
examples of non-quadratic graded algebras. On one hand we have a family of algebras $B(a,L)$, which have not the $m$-extension property for any $m$. On the other hand, for any chosen $n\ge 2$ there is an algebra with the $n+1$-extension property in $A(n,d,a)$.
It would be very interesting to study the homological behaviour of these algebras.

The following table contains all possible graded twisting maps of $K[x]$ with $K[y]$. The only twisting maps that have not been fully classified are
in the last row.
For the families with $y x= x^2-xy+y^2$ we formulate the commuting relations
with respect to the infinite matrices $Y=\phi(y)$, $M=\phi(x)$ and $\mm=\phi(x-y)$, obtained from the faithful representation  (see
Remark~\ref{representation graded})
$$
  \phi: K[x]\otimes_\sigma K[y]\to L(K^{\mathds{N}_0}).
$$
For example
the relation $y x= x^2-xy+y^2$ corresponds to $Y\mm=M \mm$, and $Y^k \mm=M^{k}\mm$ stands for $y^kx=x^{k+1}-x^ky+y^{k+1}$.

\begin{table}[htb] CLASSIFICATION TABLE\vspace{0.5cm}\\
 \begin{tabular}{|c|c|c|l|c|}
  \hline
  Commutation relations & Classification \& & Reference & $m$-extension \\
     & Parameters &  & property \\ \hline
  $yx=bxy+cy^2$  & $b,c\in K$ &   Example~\ref{Ore} & Quadratic \\ \hline
  $yx=x^2+bxy+cy^2$  & $b,c\in K$ &   Theorem~\ref{clasificacion caso generico} & Quadratic \\
   & $Q_k(b,c)\ne 0, \forall k\in\mathds{N}$ &  & \\ \hline
  $Y^k \mm=M^{k}\mm$, $\forall k\ge 1$  & Particular example &   Proposition~\ref{caso particular} & No $m$-extension \\
  & &    &property for \\
  & &    &any $m$. \\ \hline
  $Y^k \mm=M^{k}\mm$, $\forall k<n$ & Family $A(n,d,a)$ &   Theorem~\ref{anda} and & $n+1$-extension\\
  $Y^n \mm=dM^n \mm-a \mm Y^{n}$ & $n\in\mathds{N}$, $a,d\in K$,  &   Corollary~\ref{Corolario anda} & property. \\
  & $R_k(a,d)\ne 0, \forall k\in\mathds{N}$ &   & \\  \hline
  $Y^k \mm=M^{k}\mm$, $\forall k<n$ & Family $B(a,L)$ &  Propositions~\ref{LL es necesario para BnL}   &  No $m$-extension \\
  $Y^k \mm=d^r M^{k}\mm$,&$a\in K\setminus \{-1,0\}$  &  and~\ref{LL es suficiente para BnL} &  property  for \\
  if $L_r< k<L_{r+1}$& $L\in\mathcal{L}$   &  & any $m$. \\
  $Y^k \mm=d^r M^{k+1}-d^{r-1}M^j Y$& Subfamily of &  &  \\
  $+a Y^{j+1}$,& case below  &  & \\
  if  $k=L_r$&  &    & \\ \hline
  $Y^k \mm=M^{k}\mm$, $\forall k<n$ & Not fully &   Section~\ref{seccion 7}  & Conjecture:\\
  $Y^n \mm=dM^{n+1}-M^nY$ & classified &   & No
  $m$-extension\\
  $+a Y^{n+1}$& &    &property. \\ \hline
\end{tabular}
\end{table}
\section{Preliminaries}\label{preliminares}
Let $K$ be a field and let $A$ and $B$ be unitary $K$-algebras. A  twisted tensor product of $A$ with $B$ over $K$ is an associative
algebra structure defined on
$A\ot B$,
such that the canonical maps $i_A\colon A\longrightarrow  A\ot_K B$ and $i_B\colon B\longrightarrow A\ot_K B$ are algebra maps satisfying
$a\ot b= i_A(a)i_B(b)$.
We will classify the graded twisted tensor products of $A=K[x]$ with $B=K[y]$. As is known (see e.g. \cite{CSV}) classifying the twisted
tensor products is equivalent to classifying the twisting maps $\tau: K[y]\otimes K[x]\to K[x]\otimes K[y]$, which are $K$-linear maps satisfying
\begin{enumerate}

  \item[(a)] $\tau(1\otimes a)=a\otimes 1$,

  \item[(b)] $\tau(y^r\ot 1)=1\ot y^r$,

  \item[(c)]$\tau(y^r\otimes ab)=(\mu_A\otimes C)\xcirc (A\otimes \tau)\xcirc (\tau\otimes A) (y^r\otimes a\otimes b)$,

  \item[(d)] $\tau(y^ry^t\otimes a)=(A\otimes\mu_C)\xcirc (\tau\otimes C)\xcirc (C\otimes \tau) (y^r\otimes y^t\otimes a)$.

\end{enumerate}
(See for example~\cite{CSV}*{Remark 2.4}). The multiplication on $A\otimes B$ is then defined by
\begin{equation}\label{multiplicacion en A ot B}
  \mu_{\tau}=(\mu_A\otimes \mu_B)\circ (A\otimes \tau \otimes B).
\end{equation}
By definition two twisting maps $\tau$ and $\tau'$ are isomorphic if and only if there are algebra automorphisms
$g: A\to A$ and $h: B\to B$ such that $\tau' = (g^{-1}\otimes h^{-1})\circ \tau \circ (h\otimes g)$.

Now, a linear map
$\tau: K[y]\otimes K[x]\to K[x]\otimes K[y]$ determines and is determined by linear maps $\gamma^r_j:K[x]\to K[x]$ for $r,j\in\mathds{N}_0$ such that
$\gamma^r_j(a)=0$
for fixed $r,a$ and sufficiently big $j$; via the formula
$$
  \tau(y^r\otimes a)=\sum_j \gamma^r_j(a)\otimes y^j.
$$
\begin{proposition}\label{proposicion condiciones twisting}
A linear map
$\tau: K[y]\otimes K[x]\to K[x]\otimes K[y]$ is a twisting map if and only if
\begin{enumerate}
  \item $\gamma^0_j=\delta_{j0}\ide$.
  \item $\gamma^r_j(1)=\delta_{jr}$.
  \item For all $r,j$ and all $a,b\in K[x]$,
    $$
      \gamma^r_j(ab)=\sum_{k=0}^{\infty}\gamma^r_k(a)\gamma^k_j(b).
    $$
    Note that for fixed $a,b\in K[x]$, the sum is finite.
  \item For all $r,j$ and $i<r$,
    $$
      \gamma^r_j=\sum_{l=0}^j\gamma^i_l\circ \gamma^{r-i}_{j-l}.
    $$
\end{enumerate}
\end{proposition}

\begin{proof}
A straightforward computation shows that these four conditions correspond to the four conditions~(a)--(d) that characterize a twisting map
(see for example~\cite{GGV1}*{Theorem 2.1}).
\end{proof}

If $\tau$ is a twisting map, then we will define a representation of the twisted  tensor product $K[x]\otimes_\tau K[y]$ on
$K[x]^{\mathds{N}_0}$ along the lines of~\cite{GGV2}*{Theorem 1.10}. For this note that the elements of $L(K[x]^{\mathds{N}_0})$ are the infinite
matrices with
entries in $K[x]$ indexed by $\mathds{N}_0\times \mathds{N}_0$ such that each row has only a finite number of non zero entries.
\begin{notation}\label{matrices Y y Z}
  Throughout this paper we denote by $Y$ and $Z$ the infinite matrices
  $$
    Y:=\begin{pmatrix}
     0 & 1 & 0 & 0 & \dots \\
     0 & 0 & 1 & 0 &  \\
     0 & 0 & 0 & 1 &  \\
     0 & 0 & 0 & 0&  \\
     \vdots &  &  &  & \ddots
    \end{pmatrix}\quad \text{ and }\quad
    Z:=\begin{pmatrix}
     0 & 0 & 0 & 0 & \dots \\
     1 & 0 & 0 & 0 &  \\
     0 & 1 & 0 & 0 &  \\
     0 & 0 & 1 & 0 &  \\
     \vdots &  &  &  & \ddots
    \end{pmatrix}
  $$
  in $L(K[x]^{\mathds{N}_0})$. Note that $YZ=\ide$ and $ZY=\ide-E_{00}$.
\end{notation}
\noindent For any infinite matrix $B$ we have $(Y^k B)_{ij}=B_{i+k,j}$ and $(B Y^k)_{i,j}=\begin{cases}
                                                        B_{i,j-k}, & \mbox{if } j\ge k \\
                                                        0, & \mbox{otherwise}.
                                                      \end{cases}$

\noindent If now $\tau: K[y]\otimes K[x]\to K[x]\otimes K[y]$ is a twisting map determined by the $K$-linear maps $\gamma^i_j$,  for each $a\in K[x]$
we define the infinite matrix
$M(a)\in  L(K[x]^{\mathds{N}_0})$ by $M(a)_{ij}=\gamma_j^i(a)$. This matrix satisfies the finiteness condition, since
$$
  \tau(y^i\otimes a)=\sum_j \gamma^i_j(a)\otimes y^j\in K[x]\otimes K[y],
$$
so $\gamma_j^i(a)\ne 0$ only for a finite number of $j$'s.

\begin{remark} \label{conditions for M(a)}
  By conditions~(2) and~(3) we have $M(1)=\ide$ and $M(ab)=M(a) M(b)$ for all $a,b\in K[x]$.
\end{remark}

\begin{proposition}\label{representation}
Let $\tau: K[y]\otimes K[x]\to K[x]\otimes K[y]$ be a twisting map.
  The formulas $\psi(a\otimes 1)=M(a)$ and $\psi(1\otimes y)=Y$ determine an injective algebra map (faithful representation)
  $\psi: K[x]\otimes_{\tau}K[y]\to
  L(K[x]^{\mathds{N}_0})$.
\end{proposition}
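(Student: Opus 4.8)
The plan is to build $\psi$ from its prescribed values on the generators. Concretely, define $\psi\colon K[x]\ot_\tau K[y]\to L(K[x]^{\mathds{N}_0})$ on the monomial basis by $\psi(x^i\ot y^j):=M(x)^iY^j$ and extend $K$-linearly; by Remark~\ref{conditions for M(a)} we have $M(x)^i=M(x^i)$ and $M(1)=\ide$, so this is the only choice compatible with $\psi(a)=M(a)$, $\psi(y)=Y$ and multiplicativity. Since the products on $K[x]\ot_\tau K[y]$ and on $L(K[x]^{\mathds{N}_0})$ are $K$-bilinear, to prove that $\psi$ is an algebra map it is enough to check $\psi(1\ot1)=\ide$ — immediate from $M(1)=\ide$ — and
$$
\psi\bigl((x^i\ot y^r)(x^{i'}\ot y^{r'})\bigr)=\psi(x^i\ot y^r)\,\psi(x^{i'}\ot y^{r'})
$$
for all $i,i',r,r'\in\mathds{N}_0$. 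Expanding the twisted multiplication, $(x^i\ot y^r)(x^{i'}\ot y^{r'})=\sum_j x^i\gamma^r_j(x^{i'})\ot y^{j+r'}$, and applying $M(ab)=M(a)M(b)$ reduces the identity above to the single commutation relation
$$
Y^rM(a)=\sum_{j\ge0}M\bigl(\gamma^r_j(a)\bigr)\,Y^j\qquad\text{for all }a\in K[x],\ r\in\mathds{N}_0,
$$
the sum on the right being finite because $\gamma^r_j(a)=0$ for $j\gg0$.

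Establishing this commutation relation is the technical core, and the main place to be careful. I would compare entries: using $(Y^rB)_{il}=B_{i+r,l}$ one gets $(Y^rM(a))_{il}=\gamma^{i+r}_l(a)$, while using the formula for right multiplication by $Y^j$ one gets $\bigl(\sum_jM(\gamma^r_j(a))Y^j\bigr)_{il}=\sum_{j=0}^l\gamma^i_{l-j}\bigl(\gamma^r_j(a)\bigr)=\bigl(\sum_{j=0}^l\gamma^i_{l-j}\circ\gamma^r_j\bigr)(a)$. For $r\ge1$ the equality $\gamma^{i+r}_l=\sum_{j=0}^l\gamma^i_{l-j}\circ\gamma^r_j$ is precisely condition~(4) applied with upper index $i+r$, lower index $l$ and split index $i<i+r$, after the substitution $j\mapsto l-j$; for $r=0$ it holds trivially by condition~(1). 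The subtlety here is purely bookkeeping: getting the summation ranges and the orientation convention for $\tau$ aligned with condition~(4).

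For injectivity it suffices to inspect the $0$-th row of the image. Let $z=\sum_{i,j}a_{ij}\,x^i\ot y^j$ with $a_{ij}\in K$, almost all zero, and suppose $\psi(z)=0$. By condition~(1), $\gamma^0_m=\delta_{m0}\ide$, so the $0$-th row of $M(x^i)=M(x)^i$ is $(x^i,0,0,\dots)$; hence $(M(x^i)Y^j)_{0l}=\delta_{l,j}\,x^i$ and the $(0,l)$-entry of $\psi(z)=\sum_{i,j}a_{ij}M(x^i)Y^j$ equals $\sum_i a_{il}x^i\in K[x]$. Vanishing of $\psi(z)$ forces $\sum_i a_{il}x^i=0$ for every $l$, and since $\{x^i\}$ is a $K$-basis of $K[x]$ this gives $a_{ij}=0$ for all $i,j$, i.e., $z=0$. (One also needs each $M(x)^iY^j$ to lie in $L(K[x]^{\mathds{N}_0})$, which holds because $M(x)$ and $Y$ are row-finite and row-finite matrices are closed under products.) Thus the only genuine obstacle is the commutation relation $Y^rM(a)=\sum_j M(\gamma^r_j(a))Y^j$ of the previous paragraph; everything else is formal.
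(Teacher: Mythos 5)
Your proposal is correct and takes essentially the same approach as the paper: reduce to the commutation relation between $Y^r$ and $M(a)$, verify it entrywise using condition~(4), and get injectivity from the $0$-th row giving the canonical linear isomorphism onto $K[x]^{\oplus\mathds{N}_0}$. The only (harmless) difference is that the paper checks just the $r=1$ relation $\psi(y)\psi(a)=\sum_u\psi(\gamma^1_u(a))\psi(y^u)$, relying on generation by $x$ and $y$, whereas you verify the relation for all $r$ directly -- a slightly more self-contained version of the same computation.
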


\begin{proof}
  By Remark~\ref{conditions for M(a)} we have $\psi(1)=1$. For $\psi$ to be an algebra map, we need to define
  $$
  \psi(1\otimes y^k)=Y^k\quad\text{and}\quad \psi(a\otimes y^k)=\psi((a\otimes 1)(1\otimes y^k))=M(a)Y^k.
  $$
  But then $\psi$ is compatible with the multiplication of elements of the form $ (1\otimes y^i)(1\otimes y^k)$, and by
  Remark~\ref{conditions for M(a)} it is also compatible with elements of the form $(a\otimes 1)(b\otimes 1)$.
  Since by~\eqref{multiplicacion en A ot B} we know that
  $$
  (1\otimes y^k)(a\otimes 1)=(\mu_{K[x]}\otimes \mu_{K[y]})(1\otimes \tau(y^k\otimes a)\otimes 1)
  =\sum_{u} \gamma_u^k(a)\otimes y^u,
  $$
  we have to prove that
  $$
    \psi(1\otimes y^k)\psi(a\otimes 1)=\sum_{u} \psi(\gamma_u^k(a)\otimes 1)\psi(1\otimes y^u)
  $$
  for $a\in K[x]$. Using condition~(4) of Proposition~\ref{proposicion condiciones twisting} we obtain
  \begin{eqnarray*}
    (\psi(1\otimes y^k)\psi(a\otimes 1))_{ij}   &=&  (Y^k M(a))_{ij}=\gamma_j^{i+k}(a)=\sum_{u=0}^{j}\gamma_{j-u}^{i}(\gamma_u^k(a))\\
    &=& \sum_{u=0}^j M(\gamma_u^k(a))_{i,j-u}=\sum_{u=0}^{\infty} (M(\gamma_u^k(a))Y^{u})_{ij}\\
    &=& \sum_{u} \left(\psi(\gamma_u^k(a)\otimes 1)\psi(1\otimes y^u) \right)_{ij},
  \end{eqnarray*}
  which concludes the proof that $\psi$ is an algebra map.
  The injectivity follows from the fact that the composition of $\psi$ with the surjection onto the first row gives the canonical linear isomorphism
  $$
    K[x]\otimes_{\tau}K[y]\xrightarrow{\cong}  \bigoplus_{i\in \mathds{N}_0} K_i,\quad \text{where $K_i\cong K[x]$.}
  $$
\end{proof}

\begin{remark}
  The previous representation can be related to the right regular representation of the algebra $K[x]\otimes_{\tau}K[y]$. In fact, we can write
  $K[x]\otimes_{\tau}K[y]$ as a right module over itself, as
  $$
  K[x]\otimes_{\tau}K[y]\cong \sum_{n=0}^{\infty} K_i,\quad\text{with $K_i\cong K[x]$,}
  $$
  so we are considering infinite column vectors with entries in $K[x]$. Then the multiplication by $y$ and $x$ from the right are
  represented by the multiplication from the left by the matrices
  $$
  \begin{pmatrix}
     0 & 0 & 0 & 0 & \dots \\
     1 & 0 & 0 & 0 &  \\
     0 & 1 & 0 & 0 &  \\
     0 & 0 & 1 & 0 &  \\
     \vdots &  &  &  & \ddots
    \end{pmatrix}
    \quad\text{and}\quad
  \begin{pmatrix}
     x & \gamma_0^1(x) & \gamma_0^2(x) & \gamma_0^3(x) & \dots \\
     0 & \gamma_1^1(x) & \gamma_1^2(x) & \gamma_1^3(x) &  \\
     0 & \gamma_2^1(x) & \gamma_2^2(x) & \gamma_2^3(x) &  \\
     0 & \gamma_3^1(x) & \gamma_3^2(x) & \gamma_3^3(x) &  \\
     \vdots &  &  &  & \ddots
    \end{pmatrix}.
  $$
  This gives a representation of the opposite algebra $(K[x]\otimes_{\tau}K[y])^{Op}$ in the algebra of infinite
  matrices with only finitely many non-zero
  entries in each column. If we take the transposed matrices, then we obtain the representation in Proposition~\ref{representation}.
\end{remark}

\begin{notation}
Let $M\in L(K[x]^{\mathds{N}_0})$ and for fixed $k,j$ write $M_{kj}=a_0+a_1 x+\dots +a_n x^n\in K[x]$.
Then we will evaluate this polynomial at $M$ setting
  $$
  M_{kj}(M)=a_0 \ide+a_1 M+\dots +a_n M^n\in L(K[x]^{\mathds{N}_0}).
  $$
\end{notation}

\begin{proposition}\label{condiciones para twisting}
  Let $M\in L(K[x]^{\mathds{N}_0})$ be such that $M_{0j}=x\delta_{0j}$ and
  \begin{equation}\label{condiciones twisting}
    Y^k M=\sum_{j\ge 0} M_{kj}(M) Y^j.
  \end{equation}
  (Note that the sum is finite).
  Then the maps $\gamma^r_j$ defined by $\gamma^r_j(x^i)\coloneqq (M^i)_{kj}\in K[x]$ determine a twisting map.
\end{proposition}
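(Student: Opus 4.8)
The plan is to verify, for the linear maps $\gamma^r_j$ obtained by extending $\gamma^r_j(x^i):=(M^i)_{rj}$ by linearity to $K[x]$, the four conditions~(1)--(4) characterising twisting maps. The key preliminary observation is that for any polynomial $p=\sum_i c_i x^i\in K[x]$ one has $\gamma^r_j(p)=(p(M))_{rj}$, where $p(M)=\sum_i c_i M^i\in L(K[x]^{\mathds{N}_0})$ denotes the evaluation of $p$ at the matrix $M$. Since $L(K[x]^{\mathds{N}_0})$ is a unital associative ring and $M$ lies in it, $p\mapsto p(M)$ is a unital $K$-algebra homomorphism onto the commutative subring $K[M]$. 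The finiteness requirement on the $\gamma^r_j$ (for fixed $p$ and $r$, $\gamma^r_j(p)=0$ for $j$ large) is immediate, since every row of every $M^i$ is finitely supported; hence the $\gamma^r_j$ define a linear map $\tau\colon K[y]\otimes K[x]\to K[x]\otimes K[y]$ as recalled in Section~\ref{preliminares}.

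Conditions~(1)--(3) are comparatively formal. For~(2), $\gamma^r_j(1)=(M^0)_{rj}=\delta_{rj}$. For~(3), bilinearity reduces the claim to $a=x^m$ and $b=x^n$, where $\gamma^r_j(x^{m+n})=(M^{m+n})_{rj}=(M^mM^n)_{rj}=\sum_k(M^m)_{rk}(M^n)_{kj}=\sum_k\gamma^r_k(x^m)\gamma^k_j(x^n)$, the sum being finite. For~(1), I would prove by induction on $m$ that the zeroth row of $M^m$ equals $(x^m,0,0,\dots)$: this is clear for $m=0$, and $(M^{m+1})_{0j}=\sum_k(M^m)_{0k}M_{kj}=(M^m)_{00}\,M_{0j}=x^m\cdot x\delta_{0j}$ by the hypothesis $M_{0j}=x\delta_{0j}$; hence $\gamma^0_j(x^m)=\delta_{j0}x^m$, that is, $\gamma^0_j=\delta_{j0}\ide$.

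The substance of the proof is condition~(4), namely $\gamma^r_j=\sum_{l=0}^j\gamma^i_l\circ\gamma^{r-i}_{j-l}$ for $i<r$. By linearity it is enough to check it after evaluating at $x^m$, for every $m\ge0$. Using the formulas $(Y^kB)_{ij}=B_{i+k,j}$ and $(CY^l)_{ij}=C_{i,j-l}$ (for $j\ge l$), an index computation — together with the substitution $l\mapsto j-l$ in the sum defining~(4) — shows that condition~(4) evaluated at $x^m$, for all $r\ge1$, $0\le i<r$ and $j\ge0$, is equivalent to the single family of matrix identities
\[
Y^kM^m=\sum_{l\ge0}(M^m)_{kl}(M)\,Y^l\qquad(k\ge0),
\]
where the case $k=0$ is automatic from condition~(1). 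Indeed $(Y^kM^m)_{ij}=(M^m)_{i+k,j}=\gamma^{i+k}_j(x^m)$, while the $(i,j)$-entry of the right-hand side is $\sum_l\bigl((M^m)_{kl}(M)\bigr)_{i,j-l}=\sum_l\gamma^i_{j-l}\bigl((M^m)_{kl}\bigr)=\sum_l\gamma^i_{j-l}\bigl(\gamma^k_l(x^m)\bigr)$.

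It thus remains to establish the displayed identity for all $m\ge0$, which I would do by induction on $m$: the case $m=0$ is immediate and the case $m=1$ is exactly the hypothesis~\eqref{condiciones twisting}. For the inductive step I would write $Y^kM^{m+1}=(Y^kM^m)M$, apply the inductive hypothesis, and then apply~\eqref{condiciones twisting} to each $Y^lM$, obtaining
\begin{align*}
Y^kM^{m+1}&=\sum_l(M^m)_{kl}(M)\,(Y^lM)=\sum_{l,s}(M^m)_{kl}(M)\,M_{ls}(M)\,Y^s\\
&=\sum_s\Bigl(\sum_l(M^m)_{kl}(M)\,M_{ls}(M)\Bigr)Y^s,
\end{align*}
all sums being finite. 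Since $p\mapsto p(M)$ is an algebra homomorphism, $\sum_l(M^m)_{kl}(M)\,M_{ls}(M)=\bigl(\sum_l(M^m)_{kl}M_{ls}\bigr)(M)=(M^{m+1})_{ks}(M)$, which closes the induction and finishes the proof. I expect the main obstacle to be the reformulation of condition~(4) as the stated family of matrix identities: it requires keeping careful track of indices and of the (finite) rearrangements of sums so that condition~(4) over all polynomials collapses to those identities. Once this is in place the induction is essentially forced, the point being that multiplicativity of evaluation at $M$ is precisely what turns the matrix product $M^m\cdot M$ into the composition appearing in~(4).
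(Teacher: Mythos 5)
Your proof is correct and is in substance the paper's own argument: conditions (1)--(3) are handled the same way, and your induction on $m$ for the identity $Y^kM^m=\sum_{l\ge 0}(M^m)_{kl}(M)\,Y^l$ is, entry by entry, exactly the paper's induction on the exponent of $x$ in condition (4) --- base case the hypothesis of the proposition, inductive step via multiplicativity --- with the paper's nested-sum rearrangements absorbed into associativity in $L(K[x]^{\mathds{N}_0})$ and the homomorphism property of $p\mapsto p(M)$, together with the same observation $(p(M))_{rj}=\gamma^r_j(p)$ that the paper uses. So this is the same approach in a cleaner matrix-level packaging rather than a genuinely different route.
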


\begin{proof}
  We will prove that $\gamma^r_j$ satisfies~(1) to~(4) of Proposition~\ref{proposicion condiciones twisting}.

  (1): This is clear since $(M^i)_{0j}=x^i \delta_{0j}$.

  (2): Follows from $M^0=\ide$.

  (3): Clearly
  $$
    \gamma^r_j(x^{i+l})=(M^{i+l})_{rj}=\sum_{k=0}^{\infty}(M^{i})_{rk}(M^{l})_{kj}=\sum_{k=0}^{\infty}\gamma^r_k(x^i)\gamma^k_j(x^l),
  $$
  as desired.

  (4): For all $i,j,k,l$ we have to prove
  \begin{equation}\label{composition law}
    \gamma^{l+k}_j(x^i)=\sum_{u=0}^{j} \gamma^l_u (\gamma^{k}_{j-u}(x^{i})
  \end{equation}
   by induction in $i$. For $i=1$ we have
  \begin{eqnarray*}
     \gamma^{l+k}_j(x) &=& M_{k+l,j} \\
     &=& (Y^k M)_{lj}\\
     &=&\sum_{u\ge 0}(M_{ku}(M) Y^u)_{lj}\\
     &=&\sum_{u= 0}^j(M_{ku}(M))_{l,j-u}\\
     &=&\sum_{u= 0}^j\gamma^{l}_{j-u}(M_{ku})\\
     &=& \sum_{u=0}^j\gamma^{l}_{j-u}(\gamma^k_u(x))\\
     &=& \sum_{u=0}^{j} \gamma^l_u (\gamma^{k}_{j-u}(x)),
  \end{eqnarray*}
  where the fifth equality follows from the fact that for $M_{il}(x)=\sum_s a_s x^s$ we have
  $$
    (M_{il}(M))_{rj}=\sum_s a_s(M^s)_{rj}=\sum_s a_s\gamma^r_j(x^s)= \gamma^r_j\left(\sum_s a_s x^s\right)=\gamma^r_j(M_{il}).
  $$
  Assume~\eqref{composition law} is valid for $i$. Then
  \begin{eqnarray*}
    \gamma^{l+k}_j(x^{i+1}) &=& \sum_{r} \gamma^{l+k}_r(x^i)\gamma_{j}^{r}(x) \\
    &=& \sum_{r} \sum_{s=0}^r \gamma^l_s (\gamma^{k}_{r-s}(x^i))\gamma_{j}^{r}(x)\\
    &=&\sum_{s\ge 0} \sum_{n\ge 0} \gamma^l_s (\gamma^{k}_{n}(x^i))\gamma_{j}^{s+n}(x)\\
    &=&\sum_{n,s\ge 0} \gamma^l_s (\gamma^{k}_{n}(x^i))\sum_{u=0}^{j}\gamma_u^s(\gamma_{j-u}^{n}(x))\\
    &=&\sum_{n\ge 0}\sum_{u=0}^{j}\sum_{s\ge 0} \gamma^l_s (\gamma^{k}_{n}(x^i))\gamma_u^s(\gamma_{j-u}^{n}(x))\\
    &=&\sum_{n\ge 0}\sum_{u=0}^{j} \gamma^l_u (\gamma^{k}_{n}(x^i)\gamma_{j-u}^{n}(x))\\
    &=&\sum_{u=0}^{j} \gamma^l_u \left(\sum_{n\ge 0}\gamma^{k}_{n}(x^i)\gamma_{j-u}^{n}(x)\right)\\
    &=&\sum_{u=0}^{j} \gamma^l_u (\gamma^{k}_{j-u}(x^{i+1})),
  \end{eqnarray*}
  as desired.
\end{proof}

Now we assume that the potential twisting map is graded, that means that $\tau(y^r\otimes x^i)=\sum_{j=0}^{i+r} a_j x^{i+r-j}\otimes y^j$, and so
the maps $\gamma^r_j$ are homogeneous of degree $r-j$.

\begin{corollary}\label{graded condition}
  Let $M\in L(K^{\mathds{N}_0})$ be such that $M_{0j}=\delta_{0j}$, $M_{kj}=0$ for $j>k+1$
  and
  \begin{equation}\label{fundamental matrix equality}
    Y^k M=\sum_{j= 0}^{k+1} M_{kj}M^{k+1-j} Y^j.
  \end{equation}
  Then the maps $\gamma^r_j$ defined by
  \begin{equation}\label{gamma con M}
  \gamma^r_j(x^i)\coloneqq (M^i)_{kj}x^{k+i-j}
  \end{equation}
  determine a graded twisting map.
  Conversely, if $\tau:K[y]\otimes K[x]\to K[x]\otimes K[y]$ is a graded twisting map, then the corresponding
  $\gamma^r_j$'s determine via~\eqref{gamma con M} a matrix $M$ satisfying~\eqref{fundamental matrix equality},
  $M_{0j}=\delta_{0j}$ and $M_{kj}=0$ for $j>k+1$.
\end{corollary}
\begin{proof}
  The map $\tau$ determined by the $\gamma^r_j$'s is clearly graded, so we only need to show that the matrix $\widetilde{M}$ defined by
  $\widetilde{M}_{kj}=M_{kj}x^{k+1-j}$ satisfies~\eqref{condiciones twisting}, since
  $\gamma^r_j(x^i)= (\widetilde{M}^i)_{kj}$ and $\mm_{0j}= x \delta_{0j}$. We obtain
  \begin{eqnarray*}
    (Y^k\widetilde{M})_{rs} &=& (Y^k M)_{rs} x^{r+k+1-s}= x^{r+k+1-s}\sum_{j=0}^{k+1} M_{kj}(M^{k+1-j}Y^j)_{rs} \\
    &=&  x^{r+k+1-s} \sum_{j=0}^{\max\{k+1,s\}} M_{kj}(M^{k+1-j})_{r,s-j} \\
    &=& \sum_{j=0}^{\max\{k+1,s\}} (M_{kj}\widetilde{M}^{k+1-j})_{r,s-j}\\
    &=&\sum_{j=0}^{k+1}  (\widetilde{M}_{kj}(\widetilde{M})Y^j)_{rs}\\
    &=&\sum_{j\ge 0}  (\widetilde{M}_{kj}(\widetilde{M})Y^j)_{rs},
  \end{eqnarray*}
  where we use that $\widetilde{M}_{kj}(\widetilde{M})=M_{kj}\widetilde{M}^{k+1-j}$.
  Now the result follows from Proposition~\ref{condiciones para twisting}, and the converse is straightforward.
\end{proof}

\begin{remark}\label{representation graded}
  In the graded case the formulas $\phi(x)=M$ and $\phi(y)=Y$ define an injective algebra map (faithful representation)
  $\phi: K[x]\otimes_{\tau}K[y]\to L(K^{\mathds{N}_0})$.
\end{remark}

\section{Construction of the matrices associated with a twisting map}
  In order to classify the graded twisting maps, we have to classify the matrices $M$ satisfying the conditions of
  Corollary~\ref{graded condition}. Note that
  $M=\psi(x)|_{x=1}=M(x)|_{x=1}$ in the notation of Proposition~\ref{representation}.
  We will write $M_{10}=a$, $M_{11}=b$ and $M_{12}=c$ for such a matrix. In some cases the values of $a$, $b$ and $c$ determine completely the matrix
  $M$ (and hence the twisting map).

\begin{example}\label{Ore}
  If $a=0$, then the equality $yx=bxy+cy^2$ implies
  $$
    y^2x=y(bxy+cy^2)=b(yx)y+cy^3=b(bxy+cy^2)y+cy^3=b^2 xy^2+c(b+1)y^3,
  $$
  and a straightforward inductive argument shows that
  $$
    y^k x=b^k xy^k+c[k]_by^{k+1},
  $$
  where $[k]_b$ denotes the $q$-number defined by $[k]_b=1+b+b^2+\dots+b^{k-1}$.
  So the only possible non zero entries of $M$ are $M_{nn}=b^n$ and $M_{n,n+1}=c[n]_b$ and the corresponding matrix is
  $$
    \begin{pmatrix}
        1 & 0 & 0 & 0 & 0 & \dots  \\
        0 & b & c & 0 & 0 &   \\
        0 & 0 & b^2 & c(b+1) & 0 &   \\
        0 & 0 & 0 & b^3 & c(b^2+b+1) &   \\
        \vdots & \vdots &  &  0 & b^4 & \ddots \\
        \vdots &  &  & \vdots &  0 & \ddots
    \end{pmatrix},
  $$
  and the twisting map is given by
  \begin{align*}
    \tau(1\otimes x)& = x\otimes 1  ,\\
    \tau(y\otimes x) & = b x\otimes y+c(1\otimes y^2),\\
    \tau(y^2\otimes x) & = b^2 x\otimes y^2+c(b+1)(1\otimes y^3),\\
    \tau(y^3\otimes x) & = b^3 x\otimes y^3+c(b^2+b+1)(1\otimes y^4),\\
    &\vdots.
  \end{align*}
  By Corollary~\ref{graded condition} the matrix equalities
  \begin{align*}
    Y M & = b MY+c Y^2,\\
    Y^2 M & = b^2 M Y^2+c(b+1) Y^3,\\
    Y^3 M & = b^3 M Y^3+c(b^2+b+1)Y^4,\\
    &\vdots
  \end{align*}
  guarantee that $\tau$ is a twisting map. In order to prove these equalities, one notes first that the first equality implies all the others (use
  induction). Then we check directly that $Y M  = b MY+c Y^2$:
  $$
    \begin{pmatrix}
        0 & b & c & 0  &   \\
        0 & 0 & b^2 & c(b+1)  &   \\
        0 & 0 & 0 & b^3  &   \\
        \vdots & \vdots &  &  0  & \ddots \\
        \vdots &  &  & \vdots  & \ddots
    \end{pmatrix}
    =b \begin{pmatrix}
        0&1 & 0 & 0 & 0  & \dots  \\
        0 &0& b & c & 0  &   \\
        0 &0 &0 & b^2 & c(b+1)  &   \\
        \vdots && \vdots &  &  0  & \ddots \\
        \vdots &&  &  & \vdots  & \ddots
    \end{pmatrix}
    + c\begin{pmatrix}
        0&0 & 1 & 0 & 0  & \dots  \\
        0 &0& 0 & 1 & 0  &   \\
        0 &0 &0 & 0 & 1  &   \\
        \vdots && \vdots &  &  0  & \ddots \\
        \vdots &&  &  & \vdots  & \ddots
    \end{pmatrix}.
  $$
\end{example}

\begin{lemma}\label{tecnico}
  Let $A$ be an associative $K$-algebra, $k\ge 2$, $x,y\in A$, $M_{ij}\in K$ for $0\le j<k$ and $0\le i\le j+1$ such that $M_{0j}=\delta_{j0}$,
  $(M_{10},M_{11},M_{12})=(1,b,c)$ and
  $$
    y^j x=\sum_{i=0}^{j+1} M_{ji}x^{j+1-i}y^i, \quad\text{for $0\le j<k$.}
  $$
 Then
  \begin{equation}\label{induccion}
    (1-M_{k-1,k})y^k x=\sum_{s=0}^{k+1}\ov M_{ks} x^{k+1-s} y^s,
  \end{equation}
  where
  $$
    \ov M_{k0}=\sum_{i=0}^{k-1}M_{k-1,i}M_{i0},\quad \ov M_{k,k+1}=c+b M_{k-1,k}\quad\text{and}\quad
    \ov M_{ks}=bM_{k-1,s-1}+\sum_{i=s-1}^{k-1}M_{k-1,i}M_{is},
  $$
  for $s=1,\dots,k$.
\end{lemma}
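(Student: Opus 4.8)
The plan is to compute $y^kx$ by peeling off one factor of $y$ and rewriting everything in the normal form $\sum_{s}(\ast)\,x^{k+1-s}y^s$, using only the relations for exponents strictly smaller than $k$, while carefully isolating the one place where $y^kx$ reappears on the right-hand side.

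\textbf{Step 1.} I would first write $y^kx=y^{k-1}(yx)$ and substitute the $j=1$ relation $yx=x^2+bxy+cy^2$ (legitimate since $k\ge 2$, and here $M_{10}=1$, $b=M_{11}$, $c=M_{12}$), obtaining
$$
y^kx=y^{k-1}x^2+b\,(y^{k-1}x)\,y+c\,y^{k+1}.
$$

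\textbf{Step 2.} Next I would treat the term $y^{k-1}x^2=(y^{k-1}x)\,x$. Applying the $j=k-1$ relation gives $y^{k-1}x^2=\sum_{i=0}^{k}M_{k-1,i}\,x^{k-i}(y^ix)$. For $0\le i\le k-1$ I replace $y^ix$ by $\sum_{s=0}^{i+1}M_{is}x^{i+1-s}y^s$, whereas the summand $i=k$ cannot be reduced and equals $M_{k-1,k}\,y^kx$. Moving this last summand to the left-hand side is exactly what produces the factor $1-M_{k-1,k}$. Expanding also $b\,(y^{k-1}x)\,y=b\sum_{i=0}^{k}M_{k-1,i}\,x^{k-i}y^{i+1}$ by the $j=k-1$ relation, one arrives at
$$
(1-M_{k-1,k})\,y^kx=\sum_{i=0}^{k-1}\sum_{s=0}^{i+1}M_{k-1,i}M_{is}\,x^{k+1-s}y^s+b\sum_{i=0}^{k}M_{k-1,i}\,x^{k-i}y^{i+1}+c\,y^{k+1}.
$$

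\textbf{Step 3.} Finally I would collect the coefficient of each monomial $x^{k+1-s}y^s$. Swapping the order of summation in the double sum (so that for fixed $s$ the index $i$ runs from $\max(0,s-1)$ to $k-1$) and reindexing $s=i+1$ in the middle sum, one reads off: for $s=0$ only the double sum survives, giving $\ov M_{k,0}=\sum_{i=0}^{k-1}M_{k-1,i}M_{i0}$; for $1\le s\le k$ the double sum contributes $\sum_{i=s-1}^{k-1}M_{k-1,i}M_{is}$ and the middle sum adds $bM_{k-1,s-1}$, giving the stated $\ov M_{k,s}$; and for $s=k+1$ the double sum is empty, leaving $\ov M_{k,k+1}=bM_{k-1,k}+c$.

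\textbf{Main obstacle.} There is no conceptual difficulty: this is a bookkeeping computation. The only subtle points are (i) recognizing in Step 2 that the summand with $i=k$ in $(y^{k-1}x)x$ reproduces $y^kx$ — this is precisely why the conclusion carries the factor $1-M_{k-1,k}$ instead of being an outright formula for $y^kx$ — and (ii) keeping the summation limits correct when collecting coefficients in Step 3, in particular the boundary cases $s=0$ and $s=k+1$ and the index shift $i\mapsto i+1$ coming from the term $b\,(y^{k-1}x)\,y$.
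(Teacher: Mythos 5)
Your proposal is correct and follows essentially the same route as the paper: write $y^kx=y^{k-1}(yx)$, expand via the $j=1$ and $j=k-1$ relations, isolate the $i=k$ summand $M_{k-1,k}\,y^kx$ to produce the factor $1-M_{k-1,k}$, and collect coefficients of $x^{k+1-s}y^s$ with the same index bookkeeping. Nothing is missing.
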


\begin{proof}
  We have
  \begin{eqnarray*}
    y^k x &=& y^{k-1}yx=y^{k-1}(x^2+bxy+cy^2) \\
     &=& \left(\sum_{i=0}^k M_{k-1,i} x^{k-i}y^i\right) x+ b \sum_{i=0}^k M_{k-1,i} x^{k-i}y^{i+1}+c y^{k+1}  \\
     &=& M_{k-1,k}y^k x+ \sum_{i=0}^{k-1} M_{k-1,i} x^{k-i}(y^i x)+ b \sum_{s=1}^k M_{k-1,s-1} x^{k+1-s}y^s+ (c+b M_{k-1,k})y^{k+1}.
  \end{eqnarray*}
  Since
  \begin{eqnarray*}
    \sum_{i=0}^{k-1} M_{k-1,i} x^{k-i}(y^i x) &=&\sum_{i=0}^{k-1} M_{k-1,i} x^{k-i}\left(\sum_{s=0}^{i+1} M_{is}x^{i+1-s}y^s\right) \\
     &=&\sum_{i=0}^{k-1}\sum_{s=1}^{i+1} M_{k-1,i} M_{is}x^{k+1-s}y^s +\sum_{i=0}^{k-1} M_{k-1,i} M_{i0} x^{k+1} \\
     &=&\sum_{s=1}^{k} \left(\sum_{i=s-1}^{k-1} M_{k-1,i} M_{is}\right) x^{k+1-s}y^s +\sum_{i=0}^{k-1} M_{k-1,i} M_{i0} x^{k+1},
  \end{eqnarray*}
  the result follows.
\end{proof}

\begin{remark}\label{unicidad en lema}
  Let $M\in L(K^{\mathds{N}_0})$ be such that $M_{0j}=\delta_{0j}$, $M_{ji}=0$ for $i>j+1$
  and for some $k>1$,
  \begin{equation}\label{fundamental matrix equality1}
    Y^j M=\sum_{i= 0}^{j+1} M_{ji}M^{j+1-i} Y^i,\quad \text{for $j<k$.}
  \end{equation}
  Then $x=M$ and $y=Y$ satisfy the assumptions of the lemma. The equality~\eqref{induccion} reads
  $$
   (1-M_{k-1,k})Y^{k} M=\sum_{s=0}^{k+1}\ov M_{ks} M^{k+1-s} Y^s,
  $$
  and if we take the entry $(0,i)$, then the left hand side gives
  $$
  \left((1-M_{k-1,k})Y^{k} M\right)_{0i}=(1-M_{k-1,k})M_{ki},
  $$
  and the right hand gives
  $ \ov M_{ki}$, since
  $$
  \left(M^{k+1-s} Y^s\right)_{0i}=\left(M^{k+1-s}\right)_{0,i-s}= \delta_{is}.
  $$
  Hence in a twisting map with $M_{k-1,k}\ne 1$, the coefficients $M_{ki}$ are determined uniquely by the coefficients
  $M_{ji}$ with $j<k$.
\end{remark}

\begin{remark}\label{a distinto de cero}
  Given a twisting map $\tau$ such that $a\ne 0$,
  we can replace $\tau$ by the isomorphic twisting map
  $\tau'=(f^{-1}\otimes \ide)\circ \tau \circ (\ide\otimes f)$ where $f(x)=ax$. Then for $\tau'$ we have $a'=1$, $b'=b$ and $c'=ca$.
  So we can and will assume that $a=1$.
\end{remark}

\begin{proposition}\label{bc determinan M}
  Assume $M$ determines a graded twisting map.
  Assume $a=1$ and $M_{k,k+1}\ne 1$ for all $k\ge 1$. Then $b$ and $c$ determine uniquely the matrix $M$ (and hence the twisting map).
\end{proposition}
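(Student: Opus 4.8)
The statement to prove is that when $a = 1$ and $M_{k,k+1} \neq 1$ for all $k \geq 1$, the values $b = M_{11}$ and $c = M_{12}$ determine the whole matrix $M$. The natural approach is induction on the rows of $M$. By Corollary~\ref{graded condition}, $M$ has support only in positions $(k,j)$ with $0 \le j \le k+1$, so the $k$-th row has at most $k+2$ potentially nonzero entries $M_{k,0}, M_{k,1}, \dots, M_{k,k+1}$. Row $0$ is fixed by hypothesis ($M_{0j} = \delta_{0j}$) and row $1$ is $(1, b, c, 0, 0, \dots)$ by the definitions $M_{10} = a = 1$, $M_{11} = b$, $M_{12} = c$. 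The inductive claim is: for each $k \ge 1$, the entries of rows $0$ through $k$ are determined by $b$ and $c$.

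First I would note that Lemma~\ref{tecnico} is exactly the tool to make the induction step work. Applied inside the algebra $A = K[x]\otimes_\tau K[y]$ (identified with $K[x,y]$ as a $K$-vector space as in the excerpt, and using that $a = 1$ forces $yx = x^2 + bxy + cy^2$ — note that the case $a = 1$ means the relation $yx = ax^2 + bxy + cy^2$ has leading coefficient $1$, which is the hypothesis "$M_{10}=1$" of the Lemma), the Lemma gives
$$
(1 - M_{k-1,k})\, y^k x = \sum_{s=0}^{k+1} \ov M_{k,s}\, x^{k+1-s} y^s,
$$
where each $\ov M_{k,s}$ is an explicit polynomial expression in the entries of rows $0$ through $k-1$ only. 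Since the twisting map is graded, we also have $y^k x = \sum_{s=0}^{k+1} M_{k,s}\, x^{k+1-s} y^s$, and the monomials $x^{k+1-s} y^s$ for $0 \le s \le k+1$ are linearly independent in $A$ (they form part of the standard $K$-basis of $K[x]\otimes_\tau K[y]$). Comparing coefficients yields $(1 - M_{k-1,k}) M_{k,s} = \ov M_{k,s}$ for all $s$. The hypothesis $M_{k-1,k} \neq 1$ — which for the inductive step at stage $k$ is the instance with index $k-1 \ge 1$, hence covered by "$M_{k,k+1}\ne 1$ for all $k\ge 1$" — lets us divide, so
$$
M_{k,s} = \frac{\ov M_{k,s}}{1 - M_{k-1,k}},
$$
and the right-hand side depends only on rows $0,\dots,k-1$, which by the inductive hypothesis are determined by $b$ and $c$. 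Hence row $k$ is determined by $b$ and $c$, completing the induction. Finally, since $M$ determines the $\gamma^r_j$ and hence the twisting map (the remark after Corollary~\ref{graded condition}), $b$ and $c$ determine the twisting map as well.

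The main subtlety — not really an obstacle but the point that needs care — is justifying the comparison of coefficients: one must invoke that the chosen monomials $\{x^{i}y^{j}\}$ form a $K$-basis of $K[x]\otimes_\tau K[y]$ (true for any twisted tensor product, since $K[x]\otimes_\tau K[y] = K[x]\otimes K[y]$ as vector spaces), so that an identity $\sum_s \alpha_s x^{k+1-s}y^s = \sum_s \beta_s x^{k+1-s}y^s$ forces $\alpha_s = \beta_s$. One should also double-check the base case: for $k = 1$, rows $0$ and $1$ are given directly, so there is nothing to prove; the induction genuinely starts producing new information at $k = 2$, where Lemma~\ref{tecnico} (valid for $k \ge 2$) applies. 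No growth or convergence issues arise since each row is finitely supported and the recursion is a finite algebraic expression.
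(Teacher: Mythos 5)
Your proof is correct and follows essentially the same route as the paper: the paper's own argument is exactly the induction on rows via Lemma~\ref{tecnico}, dividing by $1-M_{k-1,k}\ne 0$ at each step, with your added justification (coefficient comparison in the basis $\{x^iy^j\}$ of the twisted tensor product) being detail the paper leaves implicit.
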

\begin{proof}
  By Remark~\ref{unicidad en lema} in that case the entries $M_{ji}$ with $j<k$ determine uniquely the entries $M_{ki}$, hence, by induction,
   the entries $M_{1j}$, i.e.,  $M_{10}=1$, $M_{1,1}=b$, and $M_{12}=c$, determine the whole matrix.
\end{proof}

However not every choice of $b$ and $c$ is valid, as we will see.
\begin{lemma}\label{lema clasifica}
Let $M$ be a matrix of a graded twisting map and set $b_n=M_{nn}$ and $c_n=M_{n,n+1}$. Then
  \begin{equation}\label{formula para c}
    c_{n+1}(1-c_n)=b c_n+c,\quad\text{for all $n$.}
  \end{equation}
  Moreover, if $c\ne 1$, then $c_n\ne 1$ for all $n$, and if $c=1$, then $b=-1$ and $(1-c_n)(1-c_{n+1})=0$ for all $n$.
\end{lemma}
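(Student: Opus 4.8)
The plan is to derive the single recursion~\eqref{formula para c} first, and then to deduce the remaining assertions from it by elementary manipulation. To obtain~\eqref{formula para c} I would compare the coefficient of the basis monomial $y^{n+2}$ on the two sides of $y^{n+1}x=y^n(yx)=y^n(x^2+bxy+cy^2)=(y^nx)x+b(y^nx)y+cy^{n+2}$, the computation taking place in the underlying $K$-vector space of $K[x]\otimes_\tau K[y]$, which is freely spanned by the monomials $x^iy^j$. Inserting the defining relation $y^nx=\sum_{i=0}^{n+1}M_{ni}x^{n+1-i}y^i$ and using that $M_{i,n+2}=0$ unless $i=n+1$, one finds that $(y^nx)x$ contributes $M_{n,n+1}M_{n+1,n+2}$ to the coefficient of $y^{n+2}$, that $b(y^nx)y$ contributes $bM_{n,n+1}$, and that $cy^{n+2}$ contributes $c$, while on the left the coefficient is simply $M_{n+1,n+2}$. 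With $c_n=M_{n,n+1}$ this reads $c_{n+1}=c_nc_{n+1}+bc_n+c$, which is~\eqref{formula para c}. (Alternatively, for $n\ge1$ one may read off the coefficient of $y^{n+2}$ directly from the identity~\eqref{induccion} of Lemma~\ref{tecnico} with $k=n+1$, combined with $y^{n+1}x=\sum_s M_{n+1,s}x^{n+2-s}y^s$; the case $n=0$ is immediate since $c_0=0$ and $c_1=c$.)

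For the claim when $c\neq1$: since $c_0=0$ and $c_1=c\neq1$, it suffices to derive a contradiction from the hypothesis that $c_m=1$ for some minimal $m\ge2$. Evaluating~\eqref{formula para c} at $n=m$ gives $0=c_{m+1}(1-c_m)=bc_m+c=b+c$, so $b=-c$; evaluating it at $n=m-1$ gives $1-c_{m-1}=c_m(1-c_{m-1})=bc_{m-1}+c=c(1-c_{m-1})$, hence $(1-c)(1-c_{m-1})=0$ and therefore $c_{m-1}=1$, contradicting the minimality of $m$. Thus $c_n\neq1$ for every $n$.

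For the claim when $c=1$: here $c_1=c=1$, so~\eqref{formula para c} at $n=1$ reads $0=c_2(1-c_1)=bc_1+c=b+1$, forcing $b=-1$. Feeding $b=-1$ and $c=1$ back into~\eqref{formula para c} yields $c_{n+1}(1-c_n)=1-c_n$, that is $(1-c_n)(1-c_{n+1})=0$, for all $n$. The only step that needs a little care is the first one, where comparing coefficients of monomials must be justified by the fact that $\{x^iy^j\}$ is a $K$-basis of $K[x]\otimes_\tau K[y]$, which is built into the definition of the twisted tensor product; the only (small) idea beyond routine algebra is the trick in the case $c\neq1$ of applying the recursion simultaneously at the indices $m$ and $m-1$ to propagate the equality $c_m=1$ down to $c_{m-1}=1$.
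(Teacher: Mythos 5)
Your proof is correct and takes essentially the same route as the paper: your comparison of the $y^{n+2}$-coefficients in $y^{n+1}x=y^n(yx)$ is exactly the paper's evaluation of the matrix identity $YM=M^2+bMY+cY^2$ at the entry $(n,n+2)$, and both cases are then settled by the same elementary manipulations of~\eqref{formula para c}, including deriving $b=-c$ from a hypothetical $c_m=1$. The only cosmetic difference is in the case $c\ne 1$, where you propagate the equality downward from a minimal counterexample ($c_m=1\Rightarrow c_{m-1}=1$), while the paper propagates upward by induction to get $c_k=c$ for all $k>1$; both arguments rest on the same two instances of the recursion.
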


\begin{proof}
  We have
  $$
  (YM)_{n,n+2}=M_{n+1,n+2}=c_{n+1},\quad (MY)_{n,n+2}=M_{n,n+1}=c_n,\quad (Y^2)_{n,n+2}=1
  $$
  and
  $$
   (M^2)_{n,n+2}=\sum_k M_{nk}M_{k,n+2}=M_{n,n+1}M_{n+1,n+2}=c_n c_{n+1},
  $$
  since $M_{nk}=0$ for $k>n+1$ and $M_{k,n+2}=0$ for $k<n+1$.
  The entry at $(n,n+2)$ of the matrix equality
  $$
  YM=M^2+b MY+c Y^2,
  $$
  which holds by Corollary~\ref{condiciones para twisting}, gives
   $c_{n+1}=c_n c_{n+1}+b c_n +c$, from which~\eqref{formula para c} follows.

  Assume that $c\ne 1$ and assume by contradiction
  that $c_n=1$ for some $n>1$.
  Then from~\eqref{formula para c} for $n$ we obtain $b=-c$. Equation~\eqref{formula para c} again implies that
   $c_2=c$, $c_3=c_2$, and so on, contradicting $c_n\ne 1$.

  Finally, assume that $c=1$, then from~\eqref{formula para c} for $n=1$ we obtain $b=-1$ and so~\eqref{formula para c} reads $c_{n+1}(1-c_n)=-c_n+1$,
  hence $(1-c_n)(1-c_{n+1})=0$, as desired.
\end{proof}

If $c\ne 1$, then not all values of $b$ and $c$ yield twisting maps. For example, if $c\ne 1$ and $1-c=c(b+1)$, then by~\eqref{formula para c}
we would have $c_2=\frac{c(b+1)}{1-c}=1$, which is impossible by the previous lemma.

When $c\ne 1$, the first formulas for $c_n$ are
$$
  c_2=\frac{c(b+1)}{1-c},\quad c_3=\frac{c(1+b+b^2-c)}{1-2c-bc},\quad c_4=\frac{c(1+b)(1+b^2-2c)}{1-(3+2b+b^2)+c^2},
$$
and in general we have
$$
  c_n=\frac{cP_n}{Q_n},
$$
where $P_n$ and $Q_n$ are polynomials in $b$ and $c$. Moreover, the formula~\eqref{formula para c} yields the
recursive rules
\begin{equation}\label{recursion}
  P_{n+1}=b P_n+Q_n\quad\text{and}\quad Q_{n+1}=Q_n-cP_n.
\end{equation}

Given $b$ and $c$, these values are defined even when some $c_n=1$, and that happens if and only if $Q_{n+1}=Q_{n+1}(b,c)=0$.

\begin{corollary}\label{b y c determinan M}
  Let $K$ be a field and let $b,c\in K$ with $c\ne 1$. If $b$ and $c$ determine a (necessarily unique)
  twisting map via Proposition~\ref{bc determinan M},
  then $Q_n(b,c)\ne 0$ for all $n\in\mathds{N}$, where the polynomials $P_n,Q_n\in K[b,c]$ are defined by $P_1=1$,
  $Q_1=1$ and the recursive rules~\eqref{recursion}.
\end{corollary}

\begin{proof}
  By Proposition~\ref{bc determinan M} and the previous discussion.
\end{proof}

In order to prove the converse of Corollary~\ref{b y c determinan M}, we consider the valuation on the algebra $L(K[x]^{\mathds{N}_0})$
given by
$$
  w(M):=\inf\{i-j, m_{ij}\ne 0\}
$$
for $M=(m_{ij})_{i,j\in\mathds{N}_0}$, and we also set $w(0)=+\infty$. For example $w(E_{10})=1$ and $w(E_{01})=-1=w(E_{10}+E_{01})$. Note that for
some $M$ we can have $w(M)=-\infty$.

\begin{proposition}\label{proposition valuacion}
  Let $M,N\in L(K[x]^{\mathds{N}_0})$. Then
  \begin{enumerate}
    \item $w(M+N)\ge \min\{w(M),w(N)\}$.
    \item If $w(M)\ne w(N)$, then $w(M+N)= \min\{w(M),w(N)\}$.
    \item $w(MN)\ge w(M)+w(N)$.
  \end{enumerate}
\end{proposition}
\begin{proof}
  Straightforward.
\end{proof}

\begin{definition}
  We say that $M$ is homogeneous if $m_{ij}=0$ when $i-j\ne w(M)$, and we denote by $M^{(k)}$ the homogenous component of $M$ of weight $k$ given by
  $(M^{(k)})_{ij}=\delta_{i-j,k}M_{ij}$.
\end{definition}

For example, consider the matrices $Y$ and $Z$ of Notation~\ref{matrices Y y Z}, given by $Y_{ij}=\delta_{i+1,j}$ and $Z_{ij}=\delta_{i,j+1}$. Then
both are homogeneous
with $w(Y)=-1$ and $w(Z)=1$.

Consider the subalgebra  $\mathcal{R}\subset L(K[x]^{\mathds{N}_0})$ consisting of the homogeneous matrices of weight zero. If $M$ is homogeneous of
weight $k>0$, then
the matrix $N=(N_{ij})\in \mathcal{R}\subset L(K[x]^{\mathds{N}_0})$ given by
$N_{ij}=\delta_{ij} M_{k+i,i}$,
satisfies
\begin{equation}\label{parte homogenea positiva}
  M=Z^k N.
\end{equation}

On the other hand, if $M$ is homogeneous of weight $k<0$, then
the matrix $N=(N_{ij})\in \mathcal{R}\subset L(K[x]^{\mathds{N}_0})$ given by
$N_{ij}=\delta_{ij} M_{i,i-k}$
satisfies
\begin{equation}\label{parte homogenea negativa}
  M=N Y^k.
\end{equation}

It follows that for $M$ with $w(M)>-\infty$ we have a decomposition
\begin{equation}\label{descomposicion}
  M=\sum_{j=w(M)}^{0} N_j Y^j +\sum_{k>0} Z^k N_k
\end{equation}
for some $N_k\in \mathcal{R}$, where the infinite sum converges in the $Z$-adic topology. Note that if $w(M)>0$, then the first sum is empty.

We define the shift operator and its left inverse on
$\mathcal{R}$ by setting
$$
  SA:=\Diag(0,a_0,a_1,a_2,\dots)\quad\text{and}\quad TA:=\Diag(a_1,a_2,\dots),
$$
for $A=\Diag(a_0,a_1,a_2,\dots)\in\mathcal{R}$.
Note that for a matrix $A\in \mathcal{R}$ we have $ZAY= SA$ and $YAZ= TA$.

\begin{proposition} \label{M cumple ecuacion en grado dos}
  Let $K$ be a field and let $b,c\in K$ with $Q_n(b,c)\ne 0$ for all $n\in\mathds{N}$.  Then there exists a unique matrix
  $M\in L(K[x]^{\mathds{N}_0})$ with $w(M)\ge -1$, such that
  \begin{equation}\label{ecuacion en grado 2 para M para b y c}
    YM=M^2+bMY+cY^2.
  \end{equation}
\end{proposition}

\begin{proof}
We will construct homogeneous components $M^{(j)}$ for $j\ge -1$, such that $M=\sum_{j\ge -1}M^{(j)}$
satisfies~\eqref{ecuacion en grado 2 para M para b y c}.
 Note that the equality~\eqref{ecuacion en grado 2 para M para b y c}
  is true if and only if it holds for the homogeneous components of weight $k$ for all $k$, i.e. if
  \begin{equation}\label{ecuacion graduada}
    (YM)_{k}=(M^2+b MY+cY^2)_k
  \end{equation}
  for all $k\ge -2$.  We will construct recursively $M^{(-1)}$, $M^{(0)}$, $M^{(1)}$, $M^{(2)}$,\dots, $M^{(j)}$, such that~\eqref{ecuacion graduada}
  holds for $k=-2,-1,0,1,\dots,j-1$. This yields an inductive construction of  the unique $M$ such
  that~\eqref{ecuacion en grado 2 para M para b y c}
  holds.

We write $M^{(-1)}=CY$ and $M^{(j)}=Z^j B_j$ for $j\ge 0$, for some diagonal matrices $C,B_j\in\mathcal{R}$, and so
$$
M=\sum_{j\ge -1} M^{(j)}= CY+B_0+\sum_{j\ge 1}Z^j B_j.
$$

  Note that
  $$
    (YM)_{-2}=YCY=(SC)Y^2,\quad (YM)_{-1}=YB_0=(SB_0)Y,\quad \text{and}\quad (YM)_{j}=Z^{j}B_{j+1}
  $$
  for $j\ge 0$.
  Note also that
  $$
    (MY)_{-2}=CY^2,\quad (MY)_{-1}=B_0 Y,\quad \text{and}\quad (MY)_{j}=Z^{j+1}B_{j+1}Y=Z^j (TB_{j+1})
  $$
  for $j\ge 0$.

  Finally,
  $$
    (M^2)_{-2}=CYCY=C(SC)Y^2,\quad (M^2)_{-1}=(CY)B_0+B_0CY= C(SB_0)Y+B_0CY,
  $$
  and
  \begin{eqnarray*}
    (M^2)_{j}&=&CY Z^{j+1}B_{j+1}+Z^{j+1}B_{j+1}CY+ \sum_{i=0}^{j} Z^i B_i Z^{j-i}B_{j-i}\\
    &=&  Z^{j}(S^j C)B_{j+1}+Z^{j} T(B_{j+1}C)+ Z^{j}\sum_{i=0}^{j} (S^{j-i} B_i) B_{j-i}
  \end{eqnarray*}
  for $j\ge 0$.

  For $k=-2$ the equality~\eqref{ecuacion graduada} reads
  $$
    (SC)Y^2=C(SC)Y^2+bC Y^2+c Y^2,
  $$
  and since multiplying by $Y$ on the right is injective, we have
  $$
    (SC)=C(SC)+bC+c \mathds{1},
  $$
  Hence $(\mathds{1}-C)SC=bC+c\mathds{1}$, and the $n$th entry reads $(1-c_n)c_{n+1}=bc_n+c$ which is equality~\eqref{formula para c}.
  Thus we can construct recursively $c_{n+1}$, since $Q_n(b,c)\ne 0$ guarantees that $c_n\ne 1$ for all $n$.  This proves that $b$ and $c$ determine
  uniquely $C=M_{-1}$ such that~\eqref{ecuacion graduada} holds for $k=-2$.

  For $k=-1$ the equality~\eqref{ecuacion graduada} reads
  $$
    (SB_0)Y=C(SB_0)Y+B_0CY+bB_0Y,
  $$
  and since multiplying by $Y$ on the right is injective, we have
  $$
    (SB_0)=C(SB_0)+B_0C+bB_0.
  $$
  So we have a recursive formula for $B_0$:
  $$
    (B_0)_{n+1}(1-c_n)=(B_0)_n(c_n+b).
  $$
  Since $(1-c_n)\ne 0$ and we already have $(B_0)_0=1$ and $(B_0)_1=b$, this formula determines a unique $B_0$ such that the
  equality~\eqref{ecuacion graduada} for $k=-1$ is satisfied.

  For $j\ge 0$ the equality~\eqref{ecuacion graduada} reads
  $$
    Z^{j}B_{j+1} =  Z^{j}(S^j C)B_{j+1}+Z^{j} T(B_{j+1}C)+ Z^{j}\sum_{i=0}^{j} (S^{j-i} B_i) B_{j-i}+bZ^j (TB_{j+1}),
  $$
  and since multiplication by $Z^j$ at the left is injective we have
  $$
    (\mathds{1} -  (S^j C))B_{j+1}=T(B_{j+1})(TC+b\mathds{1})+ \sum_{i=0}^{j} (S^{j-i} B_i) B_{j-i}.
  $$
  Assume we have constructed inductively $C$ and $B_i$ for $i=0,\dots,j$ such that~\eqref{ecuacion graduada} is satisfied for
  $k=-2,\dots, j-1$. Then set $R:=\sum_{i=0}^{j} (S^{j-i} B_i) B_{j-i}$, which depends only on $B_i$ for $i=0,\dots,j$,
  and we obtain a recursive formula
  $$
    (B_{j+1})_{n}(1-c_{n+j})=(B_{j+1})_{n-1}(c_{n-1}+b)+R_n,
  $$
  which yields a unique $B_{j+1}$ such that~\eqref{ecuacion graduada} is satisfied for
  $k=-2,\dots, j$. Note that the formula is valid for $n=0$ setting $(B_{j+1})_{-1}=c_{-1}=0$.

  This proves that there is a unique
  $$
    M=CY+B_0+\sum_{j\ge 1}Z^j B_j,
  $$
  satisfying~\eqref{ecuacion en grado 2 para M para  b y c}.
\end{proof}

\begin{notation}
  We define $E_j$ to be the infinite standard basis (row) vector, e.g., $E_0=(1,0,\dots)$, $E_1=(0,1,0,\dots)$.
\end{notation}

\begin{lemma}\label{primera fila de M}
  Let the first row of $M\in L(K[x]^{\mathds{N}_0})$ be given by $M_{0*}=E_0$. If $M$ satisfies
  $$
    Y^k M=\sum_{i=0}^{k+1} a_i M^{k+1-i}Y^i,
  $$
  then $M_{kj}=a_j$ for $j=0,\dots, k+1$.
\end{lemma}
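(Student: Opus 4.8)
The plan is to extract the $(0,j)$-entry of both sides of the identity $Y^kM=\sum_{i=0}^{k+1}a_i M^{k+1-i}Y^i$ and match coefficients. By the formula $(Y^kB)_{ij}=B_{i+k,j}$ recorded right after the definition of $Y$, the left-hand side contributes $(Y^kM)_{0j}=M_{kj}$, so everything reduces to computing the $0$-th row of the right-hand side.

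First I would show by induction on $m\ge 0$ that the first row of every power $M^m$ is again $E_0$, i.e. $(M^m)_{0\ell}=\delta_{0\ell}$. The cases $m=0$ (where $M^0=\ide$) and $m=1$ (the hypothesis $M_{0*}=E_0$) are immediate, and for the inductive step
$$
(M^{m+1})_{0\ell}=\sum_{k\ge 0}(M^m)_{0k}M_{k\ell}=\sum_{k\ge 0}\delta_{0k}M_{k\ell}=M_{0\ell}=\delta_{0\ell},
$$
the sum being finite because each row of a matrix in $L(K[x]^{\mathds{N}_0})$ has only finitely many nonzero entries.

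Next, using $(BY^i)_{0j}=B_{0,j-i}$ for $j\ge i$ and $0$ otherwise, I obtain $(M^{k+1-i}Y^i)_{0j}=(M^{k+1-i})_{0,j-i}=\delta_{0,j-i}$ when $j\ge i$ and $0$ when $j<i$; in both cases this equals $\delta_{ij}$. Summing over $i$ gives $\bigl(\sum_{i=0}^{k+1}a_i M^{k+1-i}Y^i\bigr)_{0j}=\sum_{i=0}^{k+1}a_i\delta_{ij}=a_j$ for $0\le j\le k+1$. Comparing with the left-hand side yields $M_{kj}=a_j$ for $j=0,\dots,k+1$, which is the claim.

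The argument is entirely mechanical, so I do not expect a genuine obstacle; the only point deserving a word of care is the well-definedness of the matrix products and the legitimacy of rearranging the finite sums involved, all of which is guaranteed by the standing convention that elements of $L(K[x]^{\mathds{N}_0})$ have rows with finitely many nonzero entries.
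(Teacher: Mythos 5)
Your proof is correct and follows essentially the same route as the paper's: extract the $(0,j)$-entry of both sides, use that $(M^r)_{0,*}=E_0$ for all $r$ (which the paper asserts directly and you justify by a short induction) to get $(M^{k+1-i}Y^i)_{0j}=\delta_{ij}$, and match coefficients. No issues.
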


\begin{proof}
  Note that $(M^r)_{0*}=E_0$ for all $r$, and so $(M^rY^{i})_{0j}=M_{0,j-i}=\delta_{ij}$.
  Hence
  $$
    M_{kj} = (Y^k M)_{0j}=\sum_{i=0}^{k+1} a_i (M^{k+1-i}Y^i)_{0j} = \sum_{i=0}^{k+1} a_i \delta_{ij} = a_j,
  $$
  as desired.
\end{proof}

\begin{theorem} \label{clasificacion caso generico}
  Let $K$ be a field and let $b,c\in K$ with $c\ne 1$. Assume that $Q_n(b,c)\ne 0$ for all $n\in\mathds{N}$.
  Then $b$ and $c$ determine a unique twisting map via Proposition~\ref{bc determinan M}.
\end{theorem}

\begin{proof}
  We will use Corollary~\ref{graded condition}. For this we first prove that the matrix $M$ constructed in
  Proposition~\ref{M cumple ecuacion en grado dos}
  satisfies equality~\eqref{fundamental matrix equality} for all $k$.
  For $k=0$ this is clear, and
  from Lemma~\ref{primera fila de M} we obtain $M_{1*}=E_0+bE_1+cE_2$, hence, by~\eqref{ecuacion en grado 2 para M para b y c}, the
  equality~\eqref{condiciones twisting} holds for $k=1$.
  Assume by induction hypothesis
  that~\eqref{condiciones twisting} holds for $k<k_0$. Then Lemma~\ref{tecnico} and the fact that $M_{k_0,k_0+1}\ne 1$ yield
  $$
    Y^{k_0} M=\sum_{k=0}^{k_0+1}\frac{\overline{M}_{k_0,s}}{1-M_{k_0,k_0+1}} M^{k_0+1-s}Y^s.
  $$
  But then, by Lemma~\eqref{primera fila de M} we have $M_{k_0,s}=\frac{\overline{M}_{k_0,s}}{1-M_{k_0,k_0+1}}$ for $s=0,\dots,k_0+1$,
  which yields~\eqref{condiciones twisting} for $k=k_0$ and completes the inductive step. Finally,
  Corollary~\ref{graded condition} yields the desired twisting map, which is unique by Proposition~\ref{bc determinan M}.
\end{proof}

\begin{remark}\label{comparacion con ConnerGoetz}
  Combining Corollary~\ref{b y c determinan M} and Theorem~\ref{clasificacion caso generico} we obtain that
  $b$ and $c\ne 1$ determine a (necessarily unique) twisting map via Proposition~\ref{bc determinan M} if and only if
  $Q_n(b,c)\ne 0$ for all $n\in\mathds{N}$.

  This condition is the same as the condition used in~\cite{CG2}*{Theorem 3.4}. In order to verify this, we first note that
   the polynomials $f_n(a,b)$ used by~\cite{CG2} satisfy
  $f_n(a,b)=Q_{n+1}(b,a)$. In fact, since $Q_1(b,a)=f_0(a,b)=1$, $P_1(b,a)=e_0(a,b)=1$, and the recursive relations are the same, i.e.,
  $$
    \binom{P_n(b,a)}{Q_n(b,a)}=\begin{pmatrix}b&1\\ -a &1 \end{pmatrix} \binom{P_{n-1}(b,a)}{Q_{n-1}(b,a)}\quad\text{and}\quad
    \binom{e_n(a,b)}{f_n(a,b)}=\begin{pmatrix}b&1\\ -a &1 \end{pmatrix} \binom{e_{n-1}(a,b)}{f_{n-1}(a,b)},
  $$
  we conclude $e_n(a,b)=P_{n+1}(b,a)$ and $f_n(a,b)=Q_{n+1}(b,a)$, as desired.

  Note that by Remark~\ref{a distinto de cero}, when $a\ne 0$, the twisting map corresponding to
  $$
    M_{10}=a, \quad M_{11}=b,\quad\text{and}\quad M_{12}=1
  $$
  is equivalent to a twisting map with
  $$
    M_{10}=1, \quad M_{11}=b,\quad\text{and}\quad M_{12}=a,
  $$
  and so our results match the results of~\cite{CG2}.
\end{remark}

\section{Roots of $Q_n$}

In view of Theorem~\ref{clasificacion caso generico}, we want to analyze the polynomials $Q_n$ and their roots.
In particular we are interested in the following question: Given a pair $(b,c)\in K^2$, does there exists an $n\in\mathds{N}$ such that $Q_n(b,c)=0$?
If the answer is no, then $(b,c)$ defines a unique twisting map via the previous theorem. Else, if $(b,c)\ne (-1,1)$,
there is no twisting map for that $(b,c)$.

For a fixed pair $(b,c)$, from the recursive relations~\eqref{recursion} in matrix form, we obtain
$$
  \binom{P_n(b,c)}{Q_n(b,c)}=\begin{pmatrix}b&1\\ -c &1 \end{pmatrix}^n \binom{1}{1}.
$$
If the eigenvalues of $D:=\begin{pmatrix}b&1\\ -c &1 \end{pmatrix}$ are different, then
 there exists an invertible matrix $T$ such that
$$
  D= T \begin{pmatrix}\lambda_1&0\\ 0 &\lambda_2 \end{pmatrix} T^{-1},
$$
and so
$$
  Q_n=r_1 \lambda_1^n+r_2 \lambda_2^n
$$
for some $r_1,r_2\in K$. If $r_1,\lambda_2\ne 0$ then $Q_n=0$ if and only if
\begin{equation}\label{chequeo}
  \left(\frac{\lambda_1}{\lambda_2}\right)^n=-\frac {r_2}{r_1}.
\end{equation}

This condition is easier to verify than the infinite number of evaluations $Q_n(b,c)$.
 For example, if $K\subset \mathds{C}$, and $| \frac {r_1}{r_2}|\ne 1$, one can check the equality $Q_n=0$
 using real logarithms on the modulus in order to find the (unique) possible $n$, and then verifying the equality~\eqref{chequeo} for that $n$.

The eigenvalues of $D$ are
$$
  \lambda_1=\frac{1}{2} \left(b+1+\sqrt{(b-1)^2-4 c}\right)\quad\text{and}\quad   \lambda_2=\frac{1}{2} \left(b+1-\sqrt{(b-1)^2-4 c}\right),
$$
and from $Q_0=1=r_1+r_2$ and $Q_1=1=r_1\lambda_1 +r_2\lambda_2$ we obtain
$$
  r_1=\frac{1-\lambda_2}{\lambda_1-\lambda_2}\quad\text{and}\quad r_2=\frac{1-\lambda_1}{\lambda_2-\lambda_1}.
$$

If
\begin{equation}\label{casos}
  0\notin\{\lambda_1,\lambda_2,1-\lambda_1,1-\lambda_2,\lambda_1-\lambda_2\},
\end{equation}
then $Q_n=0$ if and only if
$$
  \left(\frac{\lambda_1}{\lambda_2}\right)^n=\frac{1-\lambda_1}{1-\lambda_2}.
$$

Note that if $b,c\in\mathds{R}$, then $4c<(b-1)^2$ implies $|\frac {r_1}{r_2}|\ne 1$, and so in this case it
can be determined if $Q_n=0$ for some $n$.

Now we give a detailed account of each exceptional case in~\eqref{casos}:

If the eigenvalues coincide ($\lambda_1-\lambda_2=0$) then $c=\frac{(b-1)^2}{4}$.
In that case $\lambda=\lambda_1=\lambda_2=\frac{b+1}{2}\ne 0$, since $\lambda=0$ leads to $c=1$ and we also have
$$
  Q_{n+1}=(n+1-n\lambda)\lambda^n.
$$
Hence $Q_{n+1}=0$ if and only if  $\lambda=\frac{n+1}{n}$ if and only if $b=1+\frac 2n$.

If one of the values $\lambda_1,\lambda_2$ is zero, then $\det(D)=b+c=0$. In that case~\eqref{formula para c} yields $c_n=c\ne 1$ for all $n$.

Note that $(1-\lambda_1)(1-\lambda_2)=c$,
and so, if $1-\lambda_1=0$ or $1-\lambda_2=0$, then $c=0$, and in that case $c_n=0\ne 1$ for all $n$.

This covers all cases of~\eqref{casos}. However there are some other interesting cases.

For example if we require $b=0$, then we recover the polynomials $S_n$ in~\cite{CG} via the equality $S_n(c)=Q_{n-1}(0,c)$.

Another exceptional case happens when $\lambda_1=-\lambda_2$. In that case $b=-1$, and then
$$
  c_n=\left\{ \begin{array}{ll} c&\text{if $n$ is odd}\\ 0 &\text{if $n$ is even.}\end{array}\right.
$$

\section{The case $yx=x^2-xy+y^2$}

In this section we assume that $\sigma$ is a twisting map and that $Y$ and $M$ are as in Corollary~\ref{graded condition}.
As before we write $M_{1,*}=(1,b,c,0,\dots)$ and assume that $(b,c)=(-1,1)$, which is the only case not covered
by Theorem~\ref{clasificacion caso generico}. This means that we are dealing with the commutation rule
$$
  yx=x^2-xy+y^2.
$$
By Corollary~\ref{graded condition} we have
$$
  YM=M^2-MY+Y^2,
$$
which implies $\mm^2=0$, where $\mm=M-Y$. The matrix $\widetilde{M}:= M-Y=\psi(x-y)\in L(K^{\mathds{N}_0})$
plays a central role in the classification of all the twisting maps with $(b,c)=(-1,1)$.
Note that $\mm_{0j}=\delta_{0j}-\delta_{1j}$
and that $\widetilde{M}_{1*}=\widetilde{M}_{0*}$.

\begin{remark}\label{condiciones de corolario pero para mm}
  Let $\mm\in L(K^{\mathds{N}_0})$ be such that $\mm_{0j}=\delta_{0j}-\delta_{1j}$ and $\mm_{kj}=0$ for $j>k+1$.
  Then a straightforward computation shows that $M:=\mm-Y$ determines a twisting map via Corollary~\ref{graded condition},
  if and only if for all $k\in\mathds{N}$ we have
  \begin{equation}\label{fundamental matrix equality for mm}
    Y^k \mm=\sum_{j= 0}^{k+1} \mm_{kj}M^{k+1-j} Y^j.
  \end{equation}
\end{remark}

\begin{lemma}\label{mixtos}
  Let $d\ne 0$. Then
  \begin{enumerate}
    \item $\mm_{k*}=\mm_{k-1,*}$ if and only if $\mm Y^{k-1}\mm = 0$.
    \item $\mm_{k*}=d \mm_{k-1,*}$ if and only if $\mm Y^{k-1}\mm = \frac{1-d}d Y^k \mm$.
    \item $\mm_{k*}=d \mm_{k-2,*}$ if and only if $\mm Y^{k-1}\mm +Y \mm Y^{k-2}\mm = \frac{1-d}d Y^k \mm$.
  \end{enumerate}
\end{lemma}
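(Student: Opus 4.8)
The plan is to route everything through the faithful representation $\psi$ of Proposition~\ref{representation}, which converts the three matrix identities into commutation identities inside the algebra $A\coloneqq K[x]\otimes_\sigma K[y]$ that can simply be read off. Write $z\coloneqq x-y$. Recalling from Corollary~\ref{graded condition} and Proposition~\ref{representation} that $\psi(x)$ is obtained from the numerical matrix $M$ by making the $(i,j)$-entry homogeneous of degree $i+1-j$ (i.e. $\psi(x)_{ij}=M_{ij}x^{i+1-j}$) and that $\psi(y)=Y$, one gets $\psi(z)_{ij}=\mm_{ij}x^{i+1-j}$; hence any homogeneous polynomial identity in $\mm$ and $Y$ holds in $L(K^{\mathds{N}_0})$ if and only if the same identity with $\mm$ replaced by $\psi(z)$ holds in $L(K[x]^{\mathds{N}_0})$ (in a fixed entry both sides just pick up the same power of $x$), and, $\psi$ being injective, if and only if the corresponding identity holds in $A$. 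Thus (1), (2) and (3) are equivalent, respectively, to $zy^{k-1}z=0$, to $zy^{k-1}z=\tfrac{1-d}{d}\,y^kz$ and to $zy^{k-1}z+yzy^{k-2}z=\tfrac{1-d}{d}\,y^kz$ in $A$; since (1) is the case $d=1$ of (2), it suffices to establish (2) and (3).

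\textbf{Key computation.} Next I would establish the single computation that is needed: the expansion of $y^iz$ in $A$. From the graded commutation relation $y^ix=\sum_{s=0}^{i+1}M_{is}\,x^{i+1-s}y^s$ (Corollary~\ref{graded condition}) and $M_{is}=\mm_{is}+\delta_{i+1,s}$ one reads off
\[
y^iz=y^ix-y^{i+1}=\sum_{s=0}^{i+1}\mm_{is}\,x^{i+1-s}y^s .
\]
Then $zy^{k-1}z=x\,(y^{k-1}z)-y^{k}z$, and substituting this formula for $y^{k-1}z$ and $y^kz$ — no commutations arise, since $x$ only meets powers of $x$ and $y$ only meets $y^{k-1}$ — gives, using $\mm_{k-1,k+1}=0$,
\[
zy^{k-1}z=\sum_{s=0}^{k+1}\bigl(\mm_{k-1,s}-\mm_{ks}\bigr)\,x^{k+1-s}y^s .
\]

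\textbf{The mixed term and conclusion.} For (3) the remaining term $yzy^{k-2}z$ is dealt with by the observation that $yz=yx-y^2=x^2-xy=xz$ in $A$ (just a rewriting of $yx=x^2-xy+y^2$), whence $y(zw)=(yz)w=(xz)w=x(zw)$ for every $w\in A$; applying this with $w=y^{k-2}z$ and then the previous display with $k$ replaced by $k-1$ gives $yzy^{k-2}z=x\cdot zy^{k-2}z=\sum_{s}\bigl(\mm_{k-2,s}-\mm_{k-1,s}\bigr)x^{k+1-s}y^s$. It then remains to compare coefficients in the $K$-basis $\{x^ay^b\}$ of $A$. For (2): $zy^{k-1}z=\tfrac{1-d}{d}y^kz$ becomes $\mm_{k-1,s}-\mm_{ks}=\tfrac{1-d}{d}\mm_{ks}$, i.e. $\mm_{ks}=d\,\mm_{k-1,s}$, for every $s$, i.e. $\mm_{k*}=d\,\mm_{k-1,*}$; taking $d=1$ this also yields (1). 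For (3): the two expansions add to $\sum_s(\mm_{k-2,s}-\mm_{ks})\,x^{k+1-s}y^s$, so the identity becomes $\mm_{ks}=d\,\mm_{k-2,s}$ for all $s$, i.e. $\mm_{k*}=d\,\mm_{k-2,*}$. The only genuine subtlety is in part (3): a direct expansion of $yzy^{k-2}z$ would force one to commute the leading $y$ past powers of $x$, and the point is that $yz=xz$ lets one replace that $y$ by $x$ and sidestep this entirely; the passage between numerical matrices and $A$ in the first step, and the remaining monomial bookkeeping, are routine given the set-up of Corollary~\ref{graded condition} and Proposition~\ref{representation}.
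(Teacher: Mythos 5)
Your proof is correct, and it reaches the lemma by a route that differs from the paper's in packaging though not in computational substance. The paper never leaves the numerical matrix algebra $L(K^{\mathds{N}_0})$: it expands $Y^{k-1}\mm=\sum_j \mm_{k-1,j}M^{k-j}Y^j$ and $Y^{k}\mm=\sum_j \mm_{k,j}M^{k+1-j}Y^j$ (consequences of~\eqref{fundamental matrix equality}), observes that $\mm_{k*}=d\,\mm_{k-1,*}$ (resp.\ $d\,\mm_{k-2,*}$) is equivalent to $Y^k\mm=dMY^{k-1}\mm$ (resp.\ $dM^2Y^{k-2}\mm$), and then substitutes $M=\mm+Y$ and $M^2=\mm Y+Y\mm+Y^2$ (the latter using $\mm^2=0$). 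You instead pull everything back to $A=K[x]\otimes_\sigma K[y]$ through $\psi$, which requires the extra "inflation" step $\psi(z)_{ij}=\mm_{ij}x^{i+1-j}$ and the observation that identities whose monomials are words of a fixed length in $\mm,Y$ transfer entrywise (this is fine, since every generator satisfies $\wh A_{ij}=A_{ij}x^{i+1-j}$ and the common factor $x^{i+m-j}$ is harmless); after that, your $y^iz=\sum_s\mm_{is}x^{i+1-s}y^s$ mirrors the paper's expansion of $Y^i\mm$, your $yz=xz$ is exactly the algebra-level avatar of $\mm^2=0$, and your coefficient comparison in the basis $\{x^ay^b\}$ plays the role of the paper's (implicit) uniqueness of the expansion/row-$0$ reading. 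What your version buys is that both directions of each equivalence become a transparent comparison of coefficients in a basis of $A$, and it keeps the distinction between the numerical matrix $\mm$ and $\psi(x-y)$ cleanly in view (which the paper elides); what it costs is the additional transfer argument, which the paper's shorter, purely matrix-theoretic manipulation avoids.
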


\begin{proof}
  We only prove~(2) and~(3), since~(1) follows from~(2) with $d=1$.
  Assume $\mm_{k*}=d \mm_{k-1,*}$. Then
  $$
    Y^{k}\mm=\sum_{j=0}^{k+1}(d \mm_{k-1,j})M^{k+1-j}Y^j=dM \sum_{j=0}^{k} \mm_{k-1,j}M^{k-j}Y^j=d M Y^{k-1}\mm=d(\mm+Y)Y^{k-1}\mm,
  $$
  where the first and the third equality follow from~\eqref{fundamental matrix equality for mm}. Now
$Y^{k}\mm=d(\mm+Y)Y^{k-1}\mm$ implies $(1-d)Y^k\mm=d \mm Y^{k-1}\mm$, and then $\mm Y^{k-1}\mm = \frac{1-d}d Y^k \mm$ follows.
On the other hand, $\mm Y^{k-1}\mm = \frac{1-d}d Y^k \mm$ implies $Y^{k}\mm=d(\mm+Y)Y^{k-1}\mm$ and then the first row of the matrix equality
  $$
    \sum_{j=0}^{k+1} \mm_{kj}M^{k+1-j}Y^j=\sum_{j=0}^k d\mm_{k-1,j}M^{k+1-j}Y^j
  $$
  is
  $$
    \sum_{j=0}^{k+1} \mm_{kj}E_j=\sum_{j=0}^k d\mm_{k-1,j}E_j,
  $$
  which yields $\mm_{k*}=d \mm_{k-1,*}$.

  Similarly $\mm_{k*}=d \mm_{k-2,*}$ if and only if $Y^{k}\mm=d M^2 Y^{k-2}\mm=d(\mm Y+Y\mm +Y^2)Y^{k-2}\mm$ if and only if
  $(1-d)Y^k\mm=d \mm Y^{k-1}\mm + d Y \mm Y^{k-2}\mm$, as desired.
\end{proof}

\begin{lemma}\label{mixtos 2}
  Let $n\in\mathds{N}$ with $n\ge 2$. Then $\widetilde{M}_{k*}=\widetilde{M}_{0*}$ for $0< k< n$ if and only if
  $\mm Y^{k}\mm=0$ for $0\le k\le n-2$. Moreover, in this case
  $$
    M^{k+1}=
    \begin{cases} Y^{k+1}+\sum_{j=0}^k Y^j \mm Y^{k-j} &\text{for $0\le k \le n-1$}\\
                          Y^{n+1} + \sum_{j=0}^n Y^j \mm Y^{n-j} +\mm Y^{n-1}\mm &\text{for $k=n$}.
    \end{cases}
  $$
\end{lemma}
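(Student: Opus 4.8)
The plan is to obtain the first equivalence immediately from Lemma~\ref{mixtos}(1) by chaining consecutive row-equalities, and then to prove the two matrix identities by one induction on the exponent, using the hypothesis $\mm Y^k\mm=0$ only in the range $0\le k\le n-2$ where it is granted. For the equivalence, note that $\mm_{1*}=\mm_{0*}$ holds unconditionally, so the condition ``$\mm_{k*}=\mm_{0*}$ for $0<k<n$'' is equivalent to the conjunction of the consecutive equalities $\mm_{k*}=\mm_{k-1,*}$ for $1\le k\le n-1$; both simply say that the rows $\mm_{0*},\mm_{1*},\dots,\mm_{n-1,*}$ all coincide. By Lemma~\ref{mixtos}(1) applied with index $k$, the equality $\mm_{k*}=\mm_{k-1,*}$ is equivalent to $\mm Y^{k-1}\mm=0$, and as $k$ ranges over $1,\dots,n-1$ the exponent $k-1$ ranges over $0,\dots,n-2$. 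This yields both implications at once.

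Now assume $\mm Y^k\mm=0$ for $0\le k\le n-2$ and recall $M=Y+\mm$. I would prove
$$
M^{k+1}=Y^{k+1}+\sum_{j=0}^k Y^j\mm Y^{k-j}
$$
by induction on $k$ for $0\le k\le n-1$. The base case $k=0$ is just $M=Y+\mm$. For the inductive step (legitimate while $k\le n-2$), I would multiply the identity for $k$ on the left by $M=Y+\mm$, obtaining
$$
M^{k+2}=Y^{k+2}+\sum_{j=0}^k Y^{j+1}\mm Y^{k-j}+\mm Y^{k+1}+\sum_{j=0}^k\mm Y^j\mm Y^{k-j}.
$$
In the last sum each factor $\mm Y^j\mm$ with $0\le j\le k\le n-2$ vanishes by hypothesis, so that sum drops out; reindexing $\sum_{j=0}^k Y^{j+1}\mm Y^{k-j}=\sum_{i=1}^{k+1}Y^i\mm Y^{k+1-i}$ and adjoining the term $\mm Y^{k+1}$ as the $i=0$ summand produces $\sum_{i=0}^{k+1}Y^i\mm Y^{k+1-i}$, which is the identity for $k+1$.

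Finally, for the top power I would multiply the $k=n-1$ identity on the left by $M=Y+\mm$ in exactly the same way, getting
$$
M^{n+1}=Y^{n+1}+\sum_{j=0}^{n-1}Y^{j+1}\mm Y^{n-1-j}+\mm Y^n+\sum_{j=0}^{n-1}\mm Y^j\mm Y^{n-1-j}.
$$
This time, in the last sum the terms with $0\le j\le n-2$ vanish, but the term $j=n-1$, namely $\mm Y^{n-1}\mm$, need not vanish and survives, while the two middle terms reindex to $\sum_{i=0}^n Y^i\mm Y^{n-i}$; this gives $M^{n+1}=Y^{n+1}+\mm Y^{n-1}\mm+\sum_{i=0}^n Y^i\mm Y^{n-i}$, as claimed. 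The calculations are entirely routine binomial-type expansions; the only delicate point — and the place where any obstacle could hide — is the index bookkeeping, making sure that $\mm Y^j\mm=0$ is invoked precisely for $j\le n-2$. That is exactly why the extra summand $\mm Y^{n-1}\mm$ shows up in the formula for $M^{n+1}$ but in none of the lower powers $M^{k+1}$ with $k\le n-1$.
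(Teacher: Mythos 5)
Your proof is correct and takes essentially the same route as the paper: the equivalence is obtained from Lemma~\ref{mixtos}(1) applied to consecutive rows (using that $\mm_{1*}=\mm_{0*}$ automatically), and the power formulas come from expanding $M=\mm+Y$ while killing every term containing $\mm Y^{j}\mm$ with $j\le n-2$, the only survivor in degree $n+1$ being $\mm Y^{n-1}\mm$. The only difference is presentational: you organize the expansion as an induction by left-multiplying with $M$, whereas the paper expands $(\mm+Y)^{k+1}$ directly and argues about which words can contain two factors $\mm$.
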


\begin{proof}
  The first assertion follows directly from Lemma~\ref{mixtos}(1). In that case, expand $M^{k+1}=(\mm+Y)^{k+1}$. If $k<n$, no
  term in the expansion can have two times the factor $\mm$, and in the expansion of
  $M^{n+1}=(\mm+Y)^{n+1}$, the only term with two times the factor $\mm$ is $\mm Y^{n-1}\mm$.
\end{proof}

\begin{proposition}\label{caso particular}
  There exists a twisting map such that $\widetilde{M}_{k*}=\widetilde{M}_{0*}$ for all $k\in \mathds{N}$.
\end{proposition}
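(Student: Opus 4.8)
The plan is to write the matrix $M$ down explicitly and verify the hypotheses of Corollary~\ref{graded condition}; note that here $b=-1$, $c=1$ do \emph{not} determine $M$ (Proposition~\ref{bc determinan M} does not apply, since we will have $M_{k,k+1}=1$ for every $k$), so the matrix must be produced by hand rather than recovered from $(b,c)$. I would let $\mm\in L(K^{\mathds{N}_0})$ be the matrix all of whose rows equal $(1,-1,0,0,\dots)$, i.e.\ $\mm_{kj}=\delta_{j0}-\delta_{j1}$ for all $k$, and set $M:=\mm+Y$. Explicitly $M_{00}=1$, $M_{0j}=0$ for $j\ge1$, and for $k\ge1$ the $k$-th row of $M$ has entries $M_{k0}=1$, $M_{k1}=-1$, $M_{k,k+1}=1$, and all others $0$. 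In particular $M$ lies in $L(K^{\mathds{N}_0})$ (each row has at most three nonzero entries), $M_{0j}=\delta_{0j}$, $M_{kj}=0$ for $j>k+1$, and $(M_{10},M_{11},M_{12})=(1,-1,1)$, so we are in the case $yx=x^2-xy+y^2$. Since $\mm_{k*}=\mm_{0*}=\widetilde{M}_{0*}$ for all $k$ by construction, it remains only to check the matrix identity~\eqref{fundamental matrix equality} for this $M$, after which Corollary~\ref{graded condition} delivers the twisting map.

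For the chosen $M$ the right-hand side of~\eqref{fundamental matrix equality} keeps only the terms $j=0,1,k+1$ (the remaining $M_{kj}$ vanish), so for $k\ge1$ the identity to prove is $Y^kM=M^{k+1}-M^kY+Y^{k+1}$, which, subtracting $Y^{k+1}$ and factoring out $M-Y=\mm$ on the right, is equivalent to $Y^k\mm=M^k\mm$; for $k=0$ it is trivial. The crucial auxiliary computation is that $\mm Y^i\mm=0$ for every $i\ge0$: the matrix $\mm Y^i$ has, in each row, the entry $1$ in column $i$, the entry $-1$ in column $i+1$, and zeros elsewhere, so $(\mm Y^i\mm)_{rs}=\mm_{is}-\mm_{i+1,s}=0$ because all rows of $\mm$ coincide. (For $i=0$ this reads $\mm^2=0$, consistent with $YM=M^2-MY+Y^2$.) Then I would prove $M^k\mm=Y^k\mm$ by induction on $k$: the case $k=0$ is clear, and
$$
M^{k+1}\mm=M\,(M^k\mm)=M\,(Y^k\mm)=(\mm+Y)Y^k\mm=\mm Y^k\mm+Y^{k+1}\mm=Y^{k+1}\mm,
$$
using the inductive hypothesis and $\mm Y^k\mm=0$. (Equivalently, one may note that every monomial in the expansion of $M^k=(\mm+Y)^k$ with two or more factors $\mm$ contains a subword $\mm Y^i\mm=0$, so $M^k=Y^k+\sum_{j=0}^{k-1}Y^j\mm Y^{k-1-j}$, whence $M^k\mm=Y^k\mm$.)

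This is precisely~\eqref{fundamental matrix equality}, so Corollary~\ref{graded condition} produces a graded twisting map whose associated matrix is $M$, and by construction $\widetilde{M}_{k*}=\widetilde{M}_{0*}$ for all $k\in\mathds{N}$; translating back through $\psi$, its commutation relations are $y^kx=x^{k+1}-x^ky+y^{k+1}$ for all $k$. I do not anticipate a serious obstacle: the whole argument is a short explicit verification, and the only places needing care are the bookkeeping that identifies~\eqref{fundamental matrix equality} for this particular $M$ with the clean identity $Y^k\mm=M^k\mm$, and the row-independence computation yielding $\mm Y^i\mm=0$.
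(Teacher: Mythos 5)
Your proof is correct and follows essentially the same route as the paper: the same explicit matrix $\mm$ with all rows equal to $(1,-1,0,\dots)$, the same reduction of~\eqref{fundamental matrix equality} to $Y^k\mm=M^k\mm$, and a short induction. The only cosmetic difference is that the paper uses the constancy of the columns to write $Y^k\mm=\mm$ and then shows $M^k\mm=\mm$ from $\mm^2=0$, whereas you prove $M^k\mm=Y^k\mm$ directly from $\mm Y^i\mm=0$; both verifications are immediate and equivalent.
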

\begin{proof}
  Consider the matrix $\mm$ such that for all $j$,  $\mm_{j0}=1$, $\mm_{j1}=-1$ and $\mm_{ji}=0$ for $i>1$.
  We will show that $\mm$ satisfies the conditions of Remark~\ref{condiciones de corolario pero para mm}.
  Clearly $\mm_{0j}=\delta_{0j}-\delta_{1j}$ and $\mm_{kj}=0$ for $j>k+1$. So
  we have to prove that for all $k$
  \begin{equation}\label{igualdad en caso particular}
    Y^k \mm=M^{k+1}-M^k Y=M^k \mm.
  \end{equation}
  But clearly $Y^k\mm=\mm$ for all $k$, since the columns are constant. So we have to prove that $M^k\mm=\mm$. For $k=1$ we have
  $$
    M\mm=(\mm+Y)\mm=Y\mm=\mm,
  $$
  since $\mm^2=0$. This implies $M^k\mm=\mm$, which
  proves~\eqref{igualdad en caso particular},
  finishing the proof of the proposition.
\end{proof}

\begin{proposition}
  Let $n\in\mathds{N}$ with $n\ge 2$ and assume that $\widetilde{M}_{k*}=\widetilde{M}_{0*}$ for $1< k< n$ and
  that $\widetilde{M}_{n*}\ne \widetilde{M}_{0*}$. Then $\widetilde{M}_{nj}=0$ for $1<j<n$. Moreover, if one sets
  $m_i:=\widetilde{M}_{ni}$ then
  \begin{equation}\label{eq38}
    (1-m_0)Y^n \mm=m_0 \mm Y^{n-1}\mm+(m_0+m_1)\sum_{j=0}^{n-1}Y^j \mm Y^{n-j}+m_n \mm Y^n+(m_0+m_1+m_n+m_{n+1})Y^{n+1}
  \end{equation}
  and
  \begin{equation}\label{primera ecuacion}
    (m_{n+1}+1)(m_0+m_1)+m_n+m_{n+1}=0.
  \end{equation}
\end{proposition}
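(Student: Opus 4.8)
The plan is to push the hypothesis through the matrix identity \eqref{fundamental matrix equality} at level $k=n$, using the normal forms for powers of $M$ supplied by Lemma~\ref{mixtos 2}.

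Since $\mm_{1*}=\mm_{0*}$ always, the hypothesis gives $\mm_{k*}=\mm_{0*}$ for $0\le k<n$, so Lemma~\ref{mixtos 2} provides $\mm Y^{k}\mm=0$ for $0\le k\le n-2$ (recall $\mm^{2}=0$) together with
$$
M^{p}=Y^{p}+\sum_{i=0}^{p-1}Y^{i}\mm Y^{p-1-i}\quad(1\le p\le n),\qquad M^{n+1}=Y^{n+1}+\mm Y^{n-1}\mm+\sum_{i=0}^{n}Y^{i}\mm Y^{n-i}.
$$
Write $m_i:=\mm_{n,i}$, so that $M_{n,i}=m_i$ for $i\le n$ and $M_{n,n+1}=m_{n+1}+1$. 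Then \eqref{fundamental matrix equality} for $k=n$, after cancelling $Y^{n+1}$, reads $Y^{n}\mm=\sum_{j=0}^{n+1}m_j\,M^{n+1-j}Y^{j}$. Substituting the normal forms and discarding the vanishing terms $\mm Y^{k}\mm$ with $k\le n-2$, the only surviving "double-$\mm$" summand is $m_0\,\mm Y^{n-1}\mm$; collecting the terms $Y^{i}\mm Y^{n-i}$ and $Y^{n+1}$ and moving the $i=n$ summand $m_0\,Y^{n}\mm$ to the left yields a matrix identity, which I denote $(\star)$:
$$
(1-m_0)\,Y^{n}\mm=m_0\,\mm Y^{n-1}\mm+\sum_{i=1}^{n-1}\Bigl(\sum_{l=0}^{n-i}m_l\Bigr)Y^{i}\mm Y^{n-i}+\Bigl(\sum_{l=0}^{n}m_l\Bigr)\mm Y^{n}+\Bigl(\sum_{l=0}^{n+1}m_l\Bigr)Y^{n+1}.
$$

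The heart of the matter is to show $\widetilde M_{n,j}=m_j=0$ for $1<j<n$ (this is vacuous when $n=2$). Comparing the $0$-th rows of $(\star)$ is uninformative (it merely recovers $\widetilde M_{n,*}$), so I would use row $1$ of $(\star)$ together with the companion identity obtained by right-multiplying $(\star)$ by $\mm$: since $\mm^{2}=0$ and $\mm Y^{k}\mm=0$ for $k\le n-2$, that product collapses to
$$
0=\Bigl(\sum_{l=0}^{n}m_l\Bigr)\mm Y^{n}\mm+\Bigl(\sum_{l=0}^{n-1}m_l\Bigr)Y\mm Y^{n-1}\mm+\Bigl(\sum_{l=0}^{n+1}m_l\Bigr)Y^{n+1}\mm,
$$
whose $0$-th row says $m_{n+1}\widetilde M_{n+1,s}=-m_n m_s-\bigl(\sum_{l=0}^{n-1}m_l\bigr)(\delta_{s0}-\delta_{s1})$ for all $s$. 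On the other hand, for $0\le s\le n-1$ row $1$ of $(\star)$ expresses $(1-m_0)\widetilde M_{n+1,s}$ in terms of $\widetilde M_{n,*}$ alone (no row of $\mm$ beyond the $n$-th enters). Equating the two descriptions of $\widetilde M_{n+1,s}$ for $2\le s\le n$ produces a two-term recursion for the quantities $m_2,\dots,m_n$ (with $1+m_1$ playing the role of the missing initial value); in the generic situation this forces $m_2=\dots=m_{n-1}=0$ at once, but the degenerate subcases — when one of $1-m_0$, $m_0+m_1$, $m_{n+1}$, $m_n+m_{n+1}$ vanishes — have to be settled by bringing these relations, or the instances of \eqref{fundamental matrix equality} for $k>n$, to bear on rows beyond the first. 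I expect this case analysis, and the bookkeeping of which rows of $\mm$ are already determined, to be the main obstacle.

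With $m_j=0$ for $1<j<n$ established, every partial sum in $(\star)$ collapses: $\sum_{l=0}^{n-i}m_l=m_0+m_1$ for $1\le i\le n-1$, $\sum_{l=0}^{n}m_l=m_0+m_1+m_n$ and $\sum_{l=0}^{n+1}m_l=m_0+m_1+m_n+m_{n+1}$; splitting off the summand $m_n\,\mm Y^{n}$ and merging $(m_0+m_1)\,\mm Y^{n}$ into the sum turns $(\star)$ into exactly \eqref{eq38}. Finally, for \eqref{primera ecuacion} I would read the $(1,n+2)$-entry of \eqref{eq38}, which — after a short computation using the known rows of $\mm$ — gives $(1-m_0)\widetilde M_{n+1,n+2}=(m_0+m_1)(m_{n+1}+1)+m_n+m_{n+1}$, and the $(0,n+2)$-entry of the companion identity, which — since $\widetilde M_{n,n+2}=0$ — gives $m_{n+1}\widetilde M_{n+1,n+2}=0$. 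Multiplying the former by $m_{n+1}$ and inserting the latter yields $m_{n+1}\bigl[(m_0+m_1)(m_{n+1}+1)+m_n+m_{n+1}\bigr]=0$; if $m_{n+1}\ne0$ this is \eqref{primera ecuacion}, while if $m_{n+1}=0$ the $0$-th row of the companion identity at $s=n$ and $s=0$ forces $m_n=0$ and $m_0+m_1=0$, whence \eqref{primera ecuacion} holds there too.
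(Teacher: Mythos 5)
Your setup coincides with the paper's: expanding the level-$n$ instance of \eqref{fundamental matrix equality} through Lemma~\ref{mixtos 2} does yield your identity $(\star)$ (it is the paper's \eqref{igualdad en nivel n} rearranged), and \emph{once} $\widetilde M_{n,j}=0$ for $1<j<n$ is available, your passage from $(\star)$ to \eqref{eq38} and your derivation of \eqref{primera ecuacion} from the $(1,n+2)$-entry combined with the right-multiplied companion identity are sound. The gap is the central claim itself: $\widetilde M_{n,j}=0$ for $1<j<n$ is never proved. You only indicate a two-term recursion (its coefficients work out to $(1-m_0)(m_n+m_{n+1})$ and $m_{n+1}(m_0+m_1)$) obtained by comparing two expressions for $\widetilde M_{n+1,s}$, you settle the ``generic'' situation, and you explicitly defer every case in which $1-m_0$, $m_0+m_1$, $m_{n+1}$ or $m_n+m_{n+1}$ vanishes. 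Those cases are not marginal: in case~(1) of Proposition~\ref{clasificacion} one has $m_1=-m_0$ and $m_n=-m_{n+1}$, so for the whole family $A(n,d,a)$ both coefficients of your recursion vanish and it carries no information; $m_{n+1}=0$ is likewise a genuinely occurring configuration. Hence the main assertion (and with it \eqref{eq38}, and \eqref{primera ecuacion} in the branch $m_{n+1}=0$) is left unestablished exactly where it matters; as written this is a plan, not a proof.

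For comparison, the paper avoids your case analysis by evaluating at better-chosen entries. If $m_{n+1}=0$, a short direct argument using $\mm^2=0$ and $\mm Y^{k}\mm=0$ at entries of row $n$ gives $m_j=0$ for $1<j\le n$ and $m_0+m_1=0$, after which \eqref{eq38} and \eqref{primera ecuacion} are immediate. If $m_{n+1}\ne 0$, one first gets $\mm_{n+k,n+k+1}=0$ for $1\le k\le n-1$ from $0=(\mm Y^{k-1}\mm)_{n,n+k+1}=m_{n+1}\mm_{n+k,n+k+1}$, and then evaluates \eqref{igualdad en nivel n} at the entries $(i,i+n+1)$ for $i=1,\dots,n-1$, where every term is explicitly known; this yields
\begin{equation*}
0=\sum_{k=0}^{n+1} m_{k}+ m_{n+1} \sum_{k=0}^{i}m_{k},\qquad i=1,\dots,n-1,
\end{equation*}
whose consecutive differences give $m_{n+1}m_j=0$, hence $m_j=0$ for $1<j<n$, while the instance $i=1$ is precisely \eqref{primera ecuacion}. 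To salvage your route you would have to supply an actual argument for each degenerate branch (or switch to an evaluation, like the one above, that does not require distinguishing them).
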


\begin{proof}
  Assume first that $m_{n+1}=\mm_{n,n+1}=0$. We first prove that $\mm_{nj}=0$ for $n\ge j>1$. Clearly $0=(\mm^2)_{nn}=(\mm_{nn})^2$, so
  $\mm_{nn}=0$. Now assume that $2<k\le n$ and that $\mm_{nj}=0$ for $j\ge k$. Then $0\le n-k+1\le n-2$ and so by Lemma~\ref{mixtos 2}
  we have $\mm Y^{n-k+1}\mm=0$. But then $0=(\mm Y^{n-k+1}\mm)_{k-1,k-1}=(\mm_{n,k-1})^2$, hence $\mm_{n,k-1}=0$. Thus inductively we obtain
  $\mm_{nj}=0$ for $n\ge j>1$.

  From $(\mm^2)_{n0}=0$ it follows that $m_0+m_1=0$, so~\eqref{eq38} reads $(1-m_0)Y^n \mm =m_0 \mm Y^{n-1}\mm$, which is satisfied by
  Lemma~\ref{mixtos}(2),
  and~\eqref{primera ecuacion} is also trivially satisfied in this case.

  Now we can assume that $m_{n+1}\ne 0$. Consider the equality~\eqref{fundamental matrix equality for mm} for $Y^n\mm$ and expand the power of $M$
  in each summand according to Lemma~\ref{mixtos 2}. We obtain
  \begin{align}
    Y^n\mm &=m_0 \mm Y^{n-1}\mm +\sum_{k=0}^{n+1} m_{k}Y^{n+1}+ \sum_{k=0}^{n} m_{k} \sum_{j=0}^{n-k} Y^j\mm Y^{n-j}\nonumber \\
     &=m_0 \mm Y^{n-1}\mm +\sum_{k=0}^{n+1} m_{k}Y^{n+1}+ \sum_{j=0}^{n} \left( \sum_{k=0}^{n-j}m_{k}\right) Y^j\mm Y^{n-j}
    \label{igualdad en nivel n}.
  \end{align}
  We will evaluate the matrix equality~\eqref{igualdad en nivel n} at the entries $(i,i+n+1)$ for $i=1,\dots,n-1$.

  First we claim that $(\mm Y^{n-1}\mm)_{i,i+n+1}=0$ for $i=1,\dots,n-1$. In fact,
  $$
    (\mm Y^{n-1}\mm)_{i,i+n+1}=\mm_{i*}\cdot (Y^{n-1}\mm)_{*,i+n+1}=(1,-1,0,\dots,0,\dots)\cdot (\mm_{n-1,i+n+1},\mm_{n,i+n+1},\dots).
  $$
  But, since $\mm_{jk}=0$ if $k>j+1$, we also have $\mm_{n-1,i+n+1}=0=\mm_{n,i+n+1}$, which proves the claim.

  Clearly $(Y^{n+1})_{i,i+n+1}=1$, so it remains to compute $( Y^j\mm Y^{n-j})_{i,i+n+1}=\mm_{i+j,i+j+1}$.
  Now we assert that
  \begin{equation}\label{primera diagonal}
    \mm_{i+j,i+j+1}=m_{n+1}\delta_{j,n-i}.
  \end{equation}

  In fact, since $0<i\le n-1$ and $0\le j\le n$, we have $i+j>0$, and so, for $1\le i+j\le n$, we know that~\eqref{primera diagonal}
  holds. So it suffices to prove that
  \begin{equation}\label{diagonal se anula}
    \mm_{n+k,n+k+1}=0\quad\text {for $1\le k\le n-1$.}
  \end{equation}
  But
  $$
    0=(\mm Y^{k-1}\mm)_{n,n+k+1}=\mm_{n,n+1}\mm_{n+k,n+k+1}=m_{n+1} \mm_{n+k,n+k+1},
  $$
  and so, since $m_{n+1}\ne 0$, we obtain~\eqref{diagonal se anula}, which proves~\eqref{primera diagonal}.

  Finally note that by~\eqref{diagonal se anula} we have
  $$
    (Y^{n}\mm)_{i,i+n+1}=\mm_{i+n,i+n+1}=0\quad\text{for $i=1,\dots,n-1$.}
  $$
  Gathering the entries at $(i,i+n+1)$ for all the terms of~\eqref{igualdad en nivel n}
  we obtain
  $$
    0=\sum_{k=0}^{n+1} m_{k}+\sum_{j=0}^{n} \left( \sum_{k=0}^{n-j}m_{k}\right)m_{n+1}\delta_{j,n-i}=
    \sum_{k=0}^{n+1} m_{k}+ m_{n+1} \sum_{k=0}^{i}m_{k}\quad\text{for $i=1,\dots,n-1$.}
  $$
  Subtracting these equalities for consecutive values of $i$ yields $m_{n+1}m_j=0$ for $j=2,\dots,n-1$, hence  $\widetilde{M}_{nj}=m_j=0$ for
  $1<j<n$.
  From the case $i=1$ we obtain $\sum_{k=0}^{n+1} m_{k}+ m_{n+1}(m_0+m_1)=0$, which gives~\eqref{primera ecuacion}.
  Finally, using $m_j=0$ for $1<j<n$ the equality~\eqref{eq38} follows directly from~\eqref{igualdad en nivel n}.
\end{proof}

\begin{proposition}\label{clasificacion}
  Let $n\in\mathds{N}$ with $n\ge 2$ and assume that $\widetilde{M}_{k*}=\widetilde{M}_{0*}$ for $1< k< n$ and
  that $\widetilde{M}_{n*}\ne \widetilde{M}_{0*}$. Rename the only possibly non zero entries in the $n$th row as
  $$
    a:=m_{n+1}=\widetilde{M}_{n,n+1},\quad b:=m_n=\widetilde{M}_{nn},\quad c:=m_1=\widetilde{M}_{n1}\quad\text{and}\quad d:=m_0=\widetilde{M}_{n0}.
  $$
  Then either
  \begin{enumerate}
    \item $(d,c,b,a)=(d,-d,-a,a)$ with $(d,a)\ne (1,0)$ or
    \item $(d,c,b,a)=(d,-1,0,a)$ with $a\ne 0$ and $d(a+1)=1$.
  \end{enumerate}
\end{proposition}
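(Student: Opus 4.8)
The plan is to extract, from the relation \eqref{primera ecuacion} (which in the renamed variables reads $(a+1)(d+c)+b+a=0$) together with the matrix identity \eqref{igualdad en nivel n} and the vanishings $\mm^2=0$ and $\mm Y^k\mm=0$ for $0\le k\le n-2$ coming from Lemma~\ref{mixtos 2}, enough scalar relations among $a,b,c,d$ to force one of the two listed forms. Throughout I use that, by the previous proposition, $\mm_{0*}=\mm_{1*}=\dots=\mm_{n-1,*}=(1,-1,0,0,\dots)$ and that the only nonzero entries of $\mm_{n*}$ are $\mm_{n,0}=d$, $\mm_{n,1}=c$, $\mm_{n,n}=b$, $\mm_{n,n+1}=a$.

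First I would dispose of the case $a=0$: there the previous proposition already yields $\mm_{n,n}=0$ (from $(\mm^2)_{n,n}=0$) and $m_0+m_1=0$, i.e. $b=0$ and $c=-d$; then $\mm_{n*}=d\,\mm_{0*}$, and the hypothesis $\mm_{n*}\ne\mm_{0*}$ forces $d\ne1$, so $(d,c,b,a)=(d,-d,-a,a)$ with $(d,a)\ne(1,0)$ — case~(1). Now assume $a\ne0$. From $(\mm^2)_{n*}=0$ one gets $\mm_{n+1,*}=-a^{-1}\bigl[(d+c)\mm_{0*}+b\,\mm_{n*}\bigr]$, hence $\mm_{n+1,0}=-a^{-1}(d+c+bd)$ and $\mm_{n+1,1}=-a^{-1}(bc-c-d)$. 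On the other hand, since the $j=n$ summand of \eqref{igualdad en nivel n} equals $m_0Y^n\mm$, whose $(1,q)$-entry is $m_0\,\mm_{n+1,q}$, while the left side of \eqref{igualdad en nivel n} has $(1,q)$-entry $\mm_{n+1,q}$, evaluating at the entries $(1,0)$ and $(1,1)$ gives $(1-d)\mm_{n+1,0}=d(1-d)$ and $(1-d)\mm_{n+1,1}=-d(1-d)$. If $d\ne1$, dividing by $1-d$ and substituting the values above yields $c=-d(1+a+b)$ and $c(b-1)=d(a+1)$. If moreover $d=0$ these give $c=0$ and then \eqref{primera ecuacion} gives $b=-a$ — case~(1). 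If $d\notin\{0,1\}$, eliminating $c$ between the two relations gives $b(a+b)=0$: if $b=-a$ then $c=-d$ — case~(1); if $b=0$ then $c=-d(a+1)$ and \eqref{primera ecuacion} forces $d(a+1)=1$ (using $a\ne0$), whence $c=-1$ and $a\notin\{0,-1\}$ — case~(2).

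It remains to treat $d=m_0=1$ (so $a\ne0$ here, since $a=0$ was excluded above). Now the two relations above become vacuous, so instead I would evaluate \eqref{igualdad en nivel n} at the entry $(n-1,n)$. Here $(Y^j\mm Y^{n-j})_{n-1,n}=\mm_{n-1+j,\,j}$: the $j=0$ and $j=1$ terms contribute $\mm_{n-1,0}=1$ and $\mm_{n,1}=c$ with coefficients $\sum_{k=0}^{n}m_k=1+c+b$ and $\sum_{k=0}^{n-1}m_k=1+c$, and $(\mm Y^{n-1}\mm)_{n-1,n}=\mm_{n-1,n}-\mm_{n,n}=-b$; for $2\le j\le n-1$ one shows by induction on $k$ (using $\mm^2=0$, then $\mm Y^{k}\mm=0$ for $k\le n-2$ from Lemma~\ref{mixtos 2}, together with the fact that $\mm_{0*}$ and $\mm_{n*}$ vanish in columns $2,\dots,n-1$) that $\mm_{n+k,j}=0$ for $1\le k\le n-2$, so all those terms drop out. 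Since the left side of \eqref{igualdad en nivel n} carries the factor $1-m_0=0$, the slice collapses to $0=-b+(1+c+b)+(1+c)c=(1+c)^2$, so $c=-1$; then \eqref{primera ecuacion} gives $b=-a$, so $(d,c,b,a)=(1,-1,-a,a)$ — case~(1) (and $(d,a)=(1,a)\ne(1,0)$). The case $n=2$ is the same computation with the empty range of interior columns.

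The step I expect to be the main obstacle is exactly this last case $m_0=1$: the first-order comparison of the two expressions for $\mm_{n+1,*}$ degenerates, and one is forced to pass to the $(n-1)$-st row of \eqref{igualdad en nivel n}, where the a priori uncontrolled "interior" entries $\mm_{n+k,j}$ (with $1\le k\le n-2$, $2\le j\le n-1$) enter the identity; the heart of the matter is verifying that all of these vanish, so that the relation collapses cleanly to $(1+c)^2=0$. The bookkeeping of which entry of \eqref{igualdad en nivel n} to test in each regime is the only other point that needs care.
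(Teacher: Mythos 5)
Your proof is correct and takes essentially the same route as the paper: the same case split ($a=0$; $a\ne0$, $d\ne1$; $a\ne0$, $d=1$) driven by $\mm^2=0$, the evaluation of the identity \eqref{eq38}/\eqref{igualdad en nivel n} at row $1$ (entries $(1,0)$, $(1,1)$), and the relation \eqref{primera ecuacion}, yielding exactly the paper's equations \eqref{eq1}--\eqref{eq3}. The only real divergence is the degenerate case $d=1$, where the paper evaluates at the entry $(1,2)$, which involves only the already known rows $\le n$, while your choice of $(n-1,n)$ forces the auxiliary induction $\mm_{n+k,j}=0$ for the interior columns -- a correct (it follows from $(\mm Y^{k}\mm)_{n,j}=0$ and $a\ne 0$, as you indicate) but avoidable detour; your use of \eqref{primera ecuacion} to get $b=-a$ there is a slight simplification of the paper's multiply-by-$\mm$ argument.
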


\begin{proof}
  We will prove the following four assertions:
  \begin{enumerate}
    \item[i)] If $a=0$, then $b=0$ and $c=-d$.
    \item[ii)] If $a\ne 0$ and $d=1$, then $b=-a$ and $c=-1$.
    \item[iii)] If $a\ne 0$, $d\ne 1$ and $b\ne 0$,  then $b=-a$ and $c=-d$.
    \item[iv)]  If $a\ne 0$, $d\ne 1$ and $b = 0$,  then $c=-1$ and $d(a+1)=1$.
  \end{enumerate}
  Note that in item~i) we have $(a,d)\ne (0,1)$, since $\mm_{n*}\ne \mm_{0*}$.
  On one hand~i), ii) and~iii) imply condition~(1) and on the other hand~iv) implies condition~(2). Since items~i)--iv) cover all possible cases,
  it suffices to prove these items in order to show that one of the conditions~(1) or~(2) necessarily holds.

  \medskip

  i): If $\mm_{n,n+1}=0$, then $0=(\mm^2)_{nn}=(\mm_{nn})^2$, so $b=\mm_{nn}=0$. Now we obtain
  $$
    0=(\mm^2)_{n0}=\mm_{n0}+\mm_{n1}=c+d.
  $$

  ii): If $a\ne 0$ and $d=1$, then the matrix equality~\eqref{eq38} at the entry $(1,2)$ yields
  $$
    0=(\mm Y^{n-1}\mm)_{12}+(1+c)((Y^{n-2}\mm Y^2)_{12}+(Y^{n-1}\mm Y)_{12}) + b(\mm Y^{n})_{12},
  $$
  since for any matrix $C$ we have $(C Y^3)_{12}=0$. So
  $$
    0=(1,-1,0,\dots)\cdot(\mm_{n-1,2},\mm_{n2},*,*,\dots)+(1+c)(\mm_{n-1,0}+\mm_{n1}) + b(\mm Y^n)_{12}.
  $$
  If $n=2$, this gives $0=-b+(1+c)(1+c)+b\mm_{10}=(1+c)^2$ and if $n>2$ this yields directly $0=(1+c)^2$.

  So $c=-1$ and then~\eqref{eq38} reads
  $$
    0=\mm Y^{n-1}\mm+b \mm Y^n+(a+b)Y^{n+1},
  $$
  and multiplying by $\mm$ from the left we obtain $0=(a+b)\mm Y^{n+1}$ since $\mm^2=0$. Hence $b=-a$, which concludes the proof of~ii).

  \medskip

  For the rest of the proof we assume $a\ne 0$ and $d=m_0\ne 1$ and we claim that
  \begin{equation}\label{valores}
    \mm_{n+1,0}=d\quad\text{and}\quad \mm_{n+1,1}=-d.
  \end{equation}
  For this we evaluate~\eqref{eq38} at the entry $(1,0)$, noting that
  $(Y^n\mm)_{10}=\mm_{n+1,0}$, $(\mm Y^{n-1}\mm)_{10}=1-m_0$ and that $(Y^j\mm Y^{n-j})_{10}=0$ if $j<n$. So we obtain
  $$
    (1-m_0)\mm_{n+1,0}=m_0(1-m_0),
  $$
  and since $m_0\ne 1$, we have $\mm_{n+1,0}=m_0=d$.

  Now we compute $\mm_{n+1,1}$. For this we evaluate~\eqref{eq38} at the entry $(1,1)$, noting that
  $(Y^n\mm)_{11}=\mm_{n+1,1}$, $(\mm Y^{n-1}\mm)_{11}=-1-m_1$, $(Y^{n-1}\mm Y)_{11}=\mm_{n0}=m_0$ and that $(Y^j\mm Y^{n-j})_{11}=0$ if $j<n-1$. So
  $$
    (1-m_0)\mm_{n+1,1}=m_0(-1-m_1)+(m_0+m_1)m_0=-m_0(m_0+m_1),
  $$
  and since $m_0\ne 1$, we have $\mm_{n+1,1}=-m_0$, concluding the proof of~\eqref{valores}.

  \medskip

  Now, the equalities $0=(\mm^2)_{n0}=(\mm^2)_{n1}$ give
  \begin{equation} \label{eq1}
    0=d+c+db+da=-d-c+cb-da.
  \end{equation}
  Adding these yields $(c+d)b=0$, and so, if $b\ne 0$, then $c=-d$. Moreover,~\eqref{primera ecuacion} reads
    \begin{equation} \label{eq3}
    (a+1)(c+d)+b+a=0,
    \end{equation}
     and so $b=-a$, proving~iii).

  \medskip

  Finally, if $b=0$, then~\eqref{eq1} yields
  \begin{equation}\label{segunda ecuacion}
    ad+c+d=0
  \end{equation}
  and from~\eqref{eq3} it follows that $ac+a=0$, hence $c=-1$. Then~\eqref{segunda ecuacion} implies $d(a+1)=1$, which concludes the proof of~iv).
\end{proof}

\section{The family $A(n,d,a)$}
\label{seccion 5}
In this section we will describe the case~(1) of Proposition~\ref{clasificacion}. We will prove that the resulting twisting map depends
only on $n$, $d$ and $a$. We obtain a family of twisted tensor products $A(n,d,a)$, parameterized by $n\in\mathds{N}$, $n\ge 2$, and $(a,d)\in K^2$,
such that for an infinite family of polynomials $R_k$ (see Definition~\ref{definicion Rk}) we have $R_k(a,d)\ne 0$.

\begin{remark}
   Let $\tau$ be a twisting map, assume that $Y$ and $M$ are as in Corollary~\ref{graded condition} and set $\mm=M-Y$.
  Let $n\in\mathds{N}$ with $n\ge 2$, take $a,d\in K$ with $(d,a)\ne (1,0)$, assume that $\mm_{j,*}=\mm_{0,*}$ for $j<n$, and that
 we are in the case~(1) of Proposition~\ref{clasificacion}, i.e.,
 $$
 Y^n=dM^{n+1}-dM^nY-a MY^n+(a+1)Y^{n+1},
 $$
 which we write as
 $$
 Y^n\mm=dM^n\mm-a\mm Y^n.
 $$
 Using Lemma~\ref{mixtos 2}(1) we obtain $M^n\mm=\mm Y^{n-1}\mm+Y^n\mm$, and so
 \begin{equation}\label{nivel uno}
    d\mm Y^{n-1}\mm=e Y^n \mm+a \mm Y^n,
  \end{equation}
  where $e:=1-d$.
\end{remark}

\begin{proposition}\label{inductivo Anda}
  Let $A$ be an associative $K$-algebra, $a,d\in K$ with $(d,a)\ne (1,0)$, $\mm,Y\in A$ satisfying~\eqref{nivel uno}.
   Then for all $k\ge 1$ we have
  \begin{equation}\label{nivel k}
    d_k\mm Y^{kn-1}\mm=e^k Y^{kn} \mm-(-a)^k \mm Y^{kn},
  \end{equation}
  where
  \begin{equation}\label{d sub k}
    d_k=d\sum_{j=0}^{k-1} e^j (-a)^{k-1-j}=de^{k-1}[k]_{-a/e}
  \end{equation}
\end{proposition}

\begin{proof}
  If $d=0$, then $e=1$ and~\eqref{nivel uno} reads $Y^n\mm=-a \mm Y^n$. A direct computation shows that then $Y^{kn}\mm=(-a)^k \mm Y^{kn}$, which
  is~\eqref{nivel k} in this case.

  Now assume $d\ne 0$. Then a straightforward computation shows that
  \begin{equation}\label{ecuacion dk}
    e^k-\frac{a d_k}{d}=\frac{d_{k+1}}{d}.
  \end{equation}
  Now we proceed by induction on $k$. For $k=1$, equality~\eqref{nivel k} is just~\eqref{nivel uno}.
  Assume that~\eqref{nivel k} holds for some $k$. Multiplying~\eqref{nivel k} by $\mm Y^{n-1}$ from the left yields
  $$
    d_k \mm Y^{n-1}\mm Y^{kn-1}\mm=e^k \mm Y^{n(k+1)-1}\mm-(-a)^k\mm Y^{n-1}\mm Y^{kn}.
  $$
  Replacing $\mm Y^{n-1}\mm$ using~\eqref{nivel uno} and changing sides we obtain
  {\small $$
    e^k  \mm Y^{n(k+1)-1}\mm-(-a)^k\left(\frac{e}{d}Y^n \mm Y^{kn}+\frac ad \mm Y^{(k+1)n}\right)=
        d_k\left(\frac{e}{d}Y^n \mm Y^{kn-1}\mm+\frac ad \mm Y^{(k+1)n-1}\mm \right)
  $$}
  and by the inductive hypothesis we get
  {\small $$
    \left( e^k-\frac{a d_k}{d}\right)\mm Y^{n(k+1)-1}\mm = \frac {eY^n}d\left( e^k Y^{kn} \mm-(-a)^k \mm Y^{kn}  \right)+
    (-a)^k\left(\frac{e}{d}Y^n \mm Y^{kn}+\frac ad \mm Y^{(k+1)n}\right).
  $$}
  From this and~\eqref{ecuacion dk} it follows that
  $$
    \frac{d_{k+1}}d \mm Y^{n(k+1)-1}\mm=  \frac {e e^k}d  Y^{(k+1)n} \mm+\frac {a(-a)^k}d \mm Y^{(k+1)n},
  $$
  and clearing denominators completes the induction step and concludes the proof.
\end{proof}

\begin{theorem}\label{anda}
  Let $\tau$ be a twisting map, assume that $Y$ and $M$ are as in Corollary~\ref{graded condition} and set $\mm=M-Y$.
  Let $n\in\mathds{N}$ with $n\ge 2$, take $a,d\in K$ with $(d,a)\ne (1,0)$, assume that $\mm_{j,*}=\mm_{0,*}$ for $j<n$, and that
  $a$, $d$, $Y$ and $\mm$ satisfy~\eqref{nivel uno}.
  Let $d_k$ be defined by~\eqref{d sub k}. Then  $d_k+e^k\ne 0$ for all $k\in \mathds{N}$,
    \begin{equation}\label{el siguiente es igual}
     \mm_{kn,*}=\mm_{kn+j,*}\quad\text{for $k\ge 1$ and $0< j<n$,}
  \end{equation}
  and
  \begin{equation}\label{coeficientes}
    Y^{kn}\mm=\sum_{i=0}^k c_{k,i}M^{(k-i)n}\mm Y^{in},
  \end{equation}
  where $c_{k,0}=d\prod_{i=2}^{k}\frac{d_i}{d_i+e^i}$ and $c_{k,r}=\frac{(-a)^r}{d_r+e^r}\prod_{i=r+1}^{k}\frac{d_i}{d_i+e^i}$ for $1\le r\le k$.
\end{theorem}

\begin{proof}

  We first prove~\eqref{el siguiente es igual}.
  Let $i\in\{0,\dots,n-2\}$. If $a\ne 0$, then multiplying the equality~\eqref{nivel k} by $\mm Y^i$ from the left
  yields $\mm Y^{kn+i}\mm=0$ and similarly, if $e\ne0$, then multiplying~\eqref{nivel k} by $Y^i \mm $ from the right
  yields $\mm Y^{kn+i}\mm=0$. Since $(e,a)\ne (0,0)$ we obtain $\mm Y^{kn+i}\mm=0$ for $i=0,\dots,n-2$ and Lemma~\ref{mixtos}(1)
  implies~\eqref{el siguiente es igual}.

  Now we assume $d,a\ne 0$ and prove $d_k+e^k\ne 0$ and~\eqref{coeficientes} by induction on $k$.
  For $k=1$ we use the equality  $Y^{n-1}\mm=M^{n-1}\mm$ and~\eqref{nivel uno} to obtain
  \begin{align*}
    M^n\mm&=M Y^{n-1}\mm=\mm Y^{n-1}\mm+Y^n \mm\\
  &=\frac ed Y^n \mm+\frac ad \mm Y^n+Y^n \mm\\
  &= \frac{e+d}{d}Y^n \mm+\frac ad \mm Y^n.
  \end{align*}
  Since $e+d=1$, then clearing denominators and solving for $Y^n\mm$ gives~\eqref{coeficientes} for $k=1$
  (note that $c_{1,0}=d$ and $c_{1,1}=-a$, as the empty product takes the value 1).

  Now assume that~\eqref{coeficientes} holds for $k-1\ge 1$ and that $e^i+d_i\ne 0$ for $i<k$.
  By~\eqref{nivel k} we have
  $$
    e^k Y^{kn} \mm= d_k\mm Y^{kn-1}\mm + (-a)^k \mm Y^{kn}=-d_k Y^{kn}\mm+d_k M Y^{kn-1}\mm +(-a)^k \mm Y^{kn},
  $$
  and so
  $$
    (e^k+d_k)Y^{kn}\mm=d_k M Y^{kn-1}\mm +(-a)^k \mm Y^{kn}.
  $$
  By the inductive hypothesis and~\eqref{el siguiente es igual} we have
  $$
    Y^{kn-1}\mm=M^{n-1}\sum_{j=0}^{k-1}c_{k-1,j}M^{n(k-1-j)}\mm Y^{nj}
  $$
  and so
  \begin{equation}\label{coeficientes induccion}
    (e^k+d_k)Y^{kn}\mm=d_k \sum_{j=0}^{k-1}c_{k-1,j}M^{n(k-j)}\mm Y^{nj}+(-a)^k \mm Y^{kn}.
  \end{equation}
  Evaluating the matrix equality~\eqref{coeficientes induccion} at the entry $(0,kn)$ yields $e^k+d_k\ne 0$. In fact, since
  $(M^{n(k-j)})_{0*}=(1,0,0,\dots,0,\dots)$ for $j<n$, we have $(M^{n(k-j)}\mm Y^{nj})_{0,kn}=0$ for $j<n$ (note that $\mm Y^{nj}$ is of degree
  $-nj$), which implies
  $$
    (e^k+d_k)(Y^{kn}\mm)_{0,kn}=(-a)^k(\mm Y^{kn})_{0,kn}=(-a)^k \mm_{00}=(-a)^k\ne 0.
  $$
  The equality~\eqref{coeficientes induccion} also implies that~\eqref{coeficientes} holds for $k$ with
  $$
    c_{k,k}=\frac{(-a)^k}{e^k+d_k}\quad\text{and}\quad c_{k,j}=\frac{d_k c_{k-1,j}}{e^k+d_k}\quad\text{for $j<k$.}
  $$
  This completes the induction step and concludes the proof in the case $a,d\ne 0$.

  If $d=0$, then $d_k=0$ and  $e^k=1$ for all $k$, $c_{k,j}=0$ for $j<k$, and from $Y^n \mm=-a \mm Y^n$ it follows that
  $$
    Y^{kn}\mm=(-a)^k \mm Y^{kn}=c_{k,k}  \mm Y^{kn},
  $$
  as desired.

  Finally, if $a=0$, then $\frac{e_k}{d_k}=\frac{1-d}d$, hence by~\eqref{nivel k}, Lemma~\ref{mixtos}(2) and~\eqref{el siguiente es igual}
  we obtain
  $$
    \mm_{kn,*}=d \mm_{kn-1,*}=d \mm_{(k-1)n,*},
  $$
  and so~\eqref{coeficientes} holds with $c_{k,0}=d^k$ and $c_{k,j}=0$ for $j>0$, which concludes the proof, since then $e^k+d_k=e^{k-1}\ne 0$
  (note that $e=0$ leads to the contradiction $(d,a)=(1,0)$).
\end{proof}

\begin{remark} \label{formulas explicitas del teorema}
Note that Theorem~\ref{anda} yields explicit formulas for the entries of $\mm$:
  $$
    \mm_{kn+j,0}=-\mm_{kn+j,1}=c_{k,0}=d\prod_{i=2}^{k}\frac{d_i}{d_i+e^i}\quad\text{for $k\ge 1$ and $0\le j<n$,}
  $$
  $$
    \mm_{kn+j,rn}=-\mm_{kn+j,rn+1}=c_{k,r}=\frac{(-a)^r}{e^r+d_r}\prod_{i=r+1}^{k}\frac{d_i}{d_i+e^i}\quad\text{for $k\ge r\ge 1$ and $0\le j<n$,}
  $$
  and all other entries of $\mm$ are zero.
\end{remark}

\begin{definition}\label{definicion Rk}
  For $a,d\in K$ and $k\in\mathds{N}$ we define the polynomial
  $$
    R_k(a,d):=(1-d)^k+d\sum_{j=0}^{k-1}(1-d)^j(-a)^{k-1-j}.
  $$
\end{definition}
Note that $R_k(a,d)=e^k+d_k$, where $d_k$ is defined in~\eqref{d sub k}. We have $R_1(a,d)=1$ and $R_2(a,d)=1-d-ad$, so $R_2(a,d)\ne 0$
implies $(a+1)d\ne 1$, in particular $(a,d)=(0,1)$ is not allowed if we require $R_2(a,d)\ne 0$.

\begin{corollary}\label{Corolario anda}
  Let $a,d\in K$ be such that for all $k$ we have $R_k(a,d)\ne 0$. Then the formulas in
  Remark~\ref{formulas explicitas del teorema} define a matrix  $\mm$ that determines a twisting map via
  Remark~\ref{condiciones de corolario pero para mm}.
\end{corollary}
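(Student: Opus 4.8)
The plan is to take $M:=\mm+Y$, where $\mm$ is the matrix determined by $\mm_{i*}=(1,-1,0,0,\dots)$ for $0\le i<n$ and by the formulas of Theorem~\ref{anda} for $i\ge n$, and to check that this $M$ satisfies the three hypotheses of Corollary~\ref{graded condition}. The condition $M_{0j}=\delta_{0j}$ is immediate, and $M_{kj}=0$ for $j>k+1$ follows at once from the description of the (very sparse) support of $\mm$: the only columns in which row $kn+j$ of $\mm$ is nonzero are $0,1$ and $rn,rn+1$ with $1\le r\le k$, all of which are $\le kn+1\le(kn+j)+1$. So the whole content of the corollary is the fundamental matrix equality~\eqref{fundamental matrix equality}, and I would prove it exploiting that $\mm$ is \emph{block constant}: by construction $\mm_{i*}$ depends only on $\lfloor i/n\rfloor$, and within a block $\mm_{i,0}=-\mm_{i,1}$ and $\mm_{i,rn}=-\mm_{i,rn+1}$.

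The first step is to establish the three generating relations
\[
\mm^2=0,\qquad \mm Y^i\mm=0\ \text{ for }0\le i\le n-2,\qquad d\,\mm Y^{n-1}\mm=(1-d)\,Y^n\mm+a\,\mm Y^n .
\]
The first two drop out of block constancy by a one-line entrywise computation, the paired columns $\{0,1\}$ and $\{rn,rn+1\}$ cancelling because $\mm_{i,0}+\mm_{i,1}=0=\mm_{i,rn}+\mm_{i,rn+1}$. The third, which is equation~\eqref{nivel uno a}, is the real computation: one evaluates both sides at a general entry $(i,\ell)$, inserts the closed forms $c_{k,r}=\frac{(-a)^r}{e_r+d_r}\prod_{t=r+1}^{k}\frac{d_t}{d_t+e_t}$ and $c_{k,0}=d\prod_{t=2}^{k}\frac{d_t}{d_t+e_t}$ together with the recursion $\frac{d_{k+1}}{d}=e_k-\frac{a\,d_k}{d}$ from Proposition~\ref{inductivo Anda}, and checks the resulting scalar identity (the case $d=0$, where the left side vanishes and one verifies $Y^n\mm=-a\,\mm Y^n$ directly, being handled separately). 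Here the hypothesis $R_k(a,d)=e_k+d_k\ne0$ is precisely what makes the $c_{k,r}$ well defined.

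Once the generating relations hold, Proposition~\ref{inductivo Anda} applies verbatim and gives~\eqref{nivel k} for all $k$; and the argument in the proof of Theorem~\ref{anda} — which, read line by line, uses only these matrix relations, the structural fact $M_{0*}=(1,0,0,\dots)$, and the non-vanishing $R_k(a,d)\ne0$, never the a priori existence of a twisting map — then yields $\mm Y^{kn+i}\mm=0$ for $0\le i\le n-2$ and all $k$, the block identities $\mm_{kn*}=\mm_{kn+j*}$ for $0<j<n$ (equation~\eqref{el siguiente es igual}), and the key relations $Y^{kn}\mm=\sum_{r=0}^{k}c_{k,r}\,M^{(k-r)n}\,\mm\,Y^{rn}$ for $k\ge0$. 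From these I would deduce, for $0\le p<n$, the shifted version $Y^{kn+p}\mm=\sum_{r=0}^{k}c_{k,r}\,M^{(k-r)n+p}\,\mm\,Y^{rn}$, by left-multiplying the key relation by $M^p$ and observing that $(M^p-Y^p)Y^{kn}\mm=0$: every monomial other than $Y^p$ in the expansion of $(\mm+Y)^p$ ends in a factor $\mm Y^{b}$ with $b\le p-1\le n-2$, and applying it to $Y^{kn}\mm$ produces a factor $\mm Y^{kn+b}\mm=0$.

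Finally I would assemble~\eqref{fundamental matrix equality} for an arbitrary index $k=qn+p$, $0\le p<n$. Block constancy pins down row $k$ of $M$: for $1\le p<n$ its nonzero entries are $M_{k,rn}=c_{q,r}$, $M_{k,rn+1}=-c_{q,r}$ ($0\le r\le q$) and $M_{k,k+1}=1$, while for $p=0$ the entry at column $qn+1$ is $1-c_{q,q}$, since there $Y$ contributes and there is no separate slot at $k+1$. Substituting into $\sum_j M_{kj}M^{k+1-j}Y^j$, grouping each pair as $c_{q,r}\bigl(M^{k+1-rn}Y^{rn}-M^{k-rn}Y^{rn+1}\bigr)=c_{q,r}M^{k-rn}\mm Y^{rn}$ via $MY^{rn}-Y^{rn+1}=\mm Y^{rn}$, and using $k-rn=(q-r)n+p$, the sum collapses to $\sum_{r=0}^{q}c_{q,r}M^{(q-r)n+p}\mm Y^{rn}+Y^{k+1}$ — for $p=0$ the $r=q$ term contributes the extra $c_{q,q}\mm Y^{qn}$ that recombines with the modified diagonal entry. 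By the shifted key relation this equals $Y^{qn+p}\mm+Y^{k+1}=Y^k\mm+Y^{k+1}=Y^kM$, which is~\eqref{fundamental matrix equality}; Corollary~\ref{graded condition} then produces the twisting map. I expect the main obstacle to be the verification of the level-$n$ generating relation $d\,\mm Y^{n-1}\mm=(1-d)Y^n\mm+a\,\mm Y^n$ directly from the closed forms of the $c_{k,r}$, which forces one to manipulate the telescoping products $\prod_{t=r+1}^{k} d_t/(d_t+e_t)$; everything else is either block-constancy bookkeeping or a citation of arguments already carried out in Proposition~\ref{inductivo Anda} and Theorem~\ref{anda}.
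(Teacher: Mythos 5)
Your plan is essentially the paper's own proof: both verify the hypotheses of Corollary~\ref{graded condition} by combining (i) the block-constancy vanishing $\mm Y^{kn+j}\mm=0$ for $j\le n-2$, (ii) an entrywise verification of the level-$n$ relation~\eqref{nivel uno a} from the closed forms of the $c_{k,r}$, and (iii) Proposition~\ref{inductivo Anda} to propagate it to all levels $kn$, and your forward assembly of~\eqref{fundamental matrix equality} from $Y^{kn}\mm=\sum_r c_{k,r}M^{(k-r)n}\mm Y^{rn}$ and its shifted versions is just the paper's reduction (``it suffices to prove~\eqref{nivel k}'') run in the opposite direction. The one step you leave unexecuted, the ``resulting scalar identity,'' is exactly the paper's~\eqref{igualdad 518 mayor que cero}, which is not a mechanical cancellation but is proved there by induction on $r$ (with the boundary case $k=r+1$ handled separately) using $e\,d_r+d\,a_r=d_{r+1}$ and $c_{r+1,i}=c_{r,i}\,d_{r+1}/(d_{r+1}+e_{r+1})$ -- that is where the real content of the corollary lies, as you correctly anticipate.
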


\begin{proof}
  Note that
  $$
    M_{ij}=\begin{cases}
         \mm_{ij}, & \mbox{if }i\ge j  \\
         \mm_{ij}+1, & \mbox{if } i+1=j.
       \end{cases}
  $$
  Then $M_{0*}=(1,0,0,\dots )$, since $\mm_{0*}=(1,-1,0,0,\dots)$, and so by Corollary~\ref{graded condition} we have to prove that $M$
  satisfies~\eqref{fundamental matrix equality}
  for all $k$.

  For $k=0$ this is clear. For $k=1,\dots,n-1$ the equality~\eqref{fundamental matrix equality}
  reads $Y^k M=M^{k+1}-M^k Y+Y^{k+1}$, which is equivalent to
  $Y^k \mm=M^k \mm$. A straightforward computation as in Lemma~\ref{mixtos} shows that these equalities are satisfied if and only if
  $$
    \mm Y^{k-1}\mm =0\quad\text{for $k=1,\dots , n-1$.}
  $$
  We claim that
  \begin{equation}\label{igualdad en casi todos los niveles}
    \mm Y^{nk+j}\mm =0\quad\text{for $k\ge 0$ and $j=0,\dots , n-2$.}
  \end{equation}
  A similar computation as above shows that then it suffices to prove~\eqref{fundamental matrix equality} for all $k=rn$,
  because~\eqref{igualdad en casi todos los niveles}
  implies~\eqref{fundamental matrix equality} for all other $k$.
  Now we prove~\eqref{igualdad en casi todos los niveles}:

  Since the only non zero entries in $\mm_{k*}$ are of the form $\mm_{k,rn}$ or $\mm_{k,rn+1}$ for some $r\ge 0$, and
  $\mm_{k,rn+1}=-\mm_{k,rn}$, it suffices to verify that
  $$
    (Y^{nk+j}\mm)_{rn,l}=(Y^{nk+j}\mm)_{rn+1,l}\quad\text{for $j=0,\dots,n-2$.}
  $$
  But this is equivalent to $\mm_{(r+k)n+j,l}=\mm_{(r+k)n+j+1,l}$ for $j=0,\dots,n-2$, which holds by the definition
  of $\mm$ and so ~\eqref{igualdad en casi todos los niveles} is true.

  It only remains to prove~\eqref{fundamental matrix equality} for $k=rn$ with $r\ge 1$. A straightforward computation
  using~\eqref{igualdad en casi todos los niveles}
  shows that it suffices to prove~\eqref{nivel k} for all $k$, and by Proposition~\ref{inductivo Anda} we only have to prove that $\mm$
  satisfies~\eqref{nivel uno}.

  We will prove the equality~\eqref{nivel uno} in each entry $(l,nk+j)$.
  Since the columns $\mm_{*,nk+j}$ vanish for $j=2,\dots,n-1$ and $\mm_{*,nk}=-\mm_{*,nk+1}$, it suffices to prove~\eqref{nivel uno} at the entries
  $(l,nk)$.
  So we have to prove
  $$
    d(\mm Y^{n-1}\mm)_{l,nk}=e (Y^n \mm)_{l,nk}+a(\mm Y^n)_{l,nk}\quad\text{for $l\ge 0$ and $k\ge 0$.}
  $$
  But
  $$
    (\mm Y^{n-1}\mm)_{l,nk}=M_{l*}\cdot(Y^{n-1}\mm)_{*,nk},\quad (Y^n \mm)_{l,nk}=\mm_{l+n,nk},\quad (\mm Y^n)_{l,nk}=(\mm)_{l,n(k-1)}
  $$
  and $\mm_{l*}=\mm_{rn,*}$
  for $l=rn+j$  with $j=0,\dots, n-1$, hence it suffices to prove
  \begin{equation}\label{igualdad 518 en entradas}
    d(M_{rn,*}\cdot(Y^{n-1}\mm)_{*,nk})=e \mm_{(r+1)n,nk}+a \mm_{rn,n(k-1)}\quad\text{for all $r,k$.}
  \end{equation}
  Note that $\mm_{ij}=0$ if $i<0$ or $j<0$.

  By definition
  \begin{equation}\label{fila}
    \mm_{rn,*}=\sum_{i=0}^r c_{r,i}(E_{in}-E_{in+1}),
  \end{equation}
  where $E_j$ is the infinite vector with $(E_j)_i=\delta_{ij}$,
  \begin{equation}\label{coeficientes c}
    c_{k,0}=\prod_{i=1}^{k}\frac{d_i}{d_i+e^i} \quad\text{for $k\ge 0$,}\quad
    c_{k,r}=\frac{(-a)^r}{e^r+d_r}\prod_{i=r+1}^{k}\frac{d_i}{d_i+e^i} \quad \text{for $1\le r\le k$}
  \end{equation}
  and $c_{k,r}=0$ for all other $(k,r)$.

  Since $(Y^{n-1}\mm)_{*,nk}=(\mm_{n-1,nk},\mm_{n,nk},\mm_{n+1,nk},\dots)$ we have
  $$
    M_{rn,*}\cdot(Y^{n-1}\mm)_{*,nk}=\sum_{i=0}^r c_{r,i}(\mm_{in+n-1,nk}-\mm_{in+1+n-1,nk})=\sum_{i=0}^r c_{r,i}(c_{i,k}-c_{i+1,k}).
  $$
  Moreover,
  $$
    e \mm_{(r+1)n,nk}=e c_{r+1,k}\quad\text{and}\quad a \mm_{rn,n(k-1)}= a c_{r,k-1}\quad\text{(note that $c_{r,-1}=0$)}
  $$
  so~\eqref{igualdad 518 en entradas} reads
  \begin{equation}\label{igualdad 518 mayor que cero}
    d \sum_{i=0}^r c_{r,i}(c_{i,k}-c_{i+1,k})= e c_{r+1,k}+a c_{r,k-1}.
  \end{equation}
  In order finish the proof it suffices to prove~\eqref{igualdad 518 mayor que cero} for all $r,k$. For this
  we will use
  \begin{equation}\label{ecuacion util}
    e d_r+d (-a)^r=d_{r+1},
  \end{equation}
  which follows directly from the definitions of $d_r$.

  For $k>r+1$ the equality~\eqref{igualdad 518 mayor que cero} is trivially true, since in that case both sides vanish.
  If $k=r+1$, then~\eqref{igualdad 518 mayor que cero} reads

  \begin{equation}\label{caso r+1}
    -d c_{r,r}c_{r+1,r+1}=e c_{r+1,r+1}+a c_{r,r}.
  \end{equation}

  Since $c_{r,r}(e^r+d_r)=(-a)^r$ (note that $d_0=0$), this is equivalent to
  $$
    -d (-a)^{r}(-a)^{r+1}=e (-a)^{r+1}(e^r+d_r)+a (-a)^{r}(e^{r+1}+d_{r+1}).
  $$
  For $a=0$ this is true, and if $a\ne 0$, this is equivalent to $-d (-a)^r=e(e^r+d_r)-(e^{r+1}+d_{r+1})$, which
   follows directly from~\eqref{ecuacion util}, hence the case $k=r+1$ is proved.

  Now we can assume that $k\le r$ and we will use
  that for $i\le r$ we have
  \begin{equation}\label{cr inductivo}
    c_{r+1,i}=c_{r,i}\frac{d_{r+1}}{d_{r+1}+e^{r+1}}.
  \end{equation}

  We prove~\eqref{igualdad 518 mayor que cero} by induction on $r$ (assuming $k\le r$ and using that~\eqref{igualdad 518 mayor que cero} is true for
  $k=r+1$). For $r=0=k$ this means
  $$
    d c_{0,0}(c_{0,0}-c_{1,0})=  e c_{1,0}+a c_{0,-1}.
  $$
  Using $c_{0,0}=1$ and $c_{1,0}=d=1-e$ we see that this equality is equivalent to $d(1-d)=ed$ which is true by definition of $e$.

  Assume~\eqref{igualdad 518 mayor que cero} is true for some $r-1\ge 0$. Multiplying~\eqref{ecuacion util} by
  $e^{r+1}$ we obtain
  $$
    e d_r e^{r+1}+d (-a)^r e^{r+1}=e e^r d_{r+1},
  $$
  and adding $e d_r d_{r+1}$ this reads
  $$
    d_r e(e^{r+1}+d_{r+1})+d (-a)^r e^{r+1}=e d_{r+1}(e^r+d_r).
  $$
  Using that $(-a)^r=c_{r,r}(e_r+d_r)$  we obtain
  $$
    \frac{d_r}{e^r+d_r} e(e^{r+1}+d_{r+1})+d c_{r,r} e^{r+1}=e d_{r+1},
  $$
  which we can write as
  $$
    \frac{d_r}{e^r+d_r} e + d c_{r,r} \left(1-\frac{d_{r+1}}{d_{r+1}+e^{r+1}}\right)=e \frac{d_{r+1}}{d_{r+1}+e^{r+1}}.
  $$
  Next we multiply by $c_{r,k}$ and, since by~\eqref{cr inductivo} we know that $c_{r+1,k}=c_{r,k} \frac{d_{r+1}}{d_{r+1}+e^{r+1}}$,
  it follows that
  \begin{equation}\label{ecuacion intermedia}
    \frac{d_r}{e^r+d_r} e c_{r,k} + d c_{r,r} (c_{r,k}-c_{r+1,k})=e c_{r+1,k}.
  \end{equation}
  We claim that
  \begin{equation}\label{igualdad en la suma}
    e c_{r,k} = d \sum_{i=0}^{r-1}c_{r-1,i}(c_{i,k}-c_{i+1,k})-a c_{r-1,k-1}.
  \end{equation}
  In fact, if $k<r$, this follows from the inductive hypothesis, and if $k=r$,
  then~\eqref{caso r+1} gives the same equality. Now the equalities~\eqref{igualdad en la suma} and~\eqref{cr inductivo} imply
  $$
    \frac{d_r}{e^r+d_r} e c_{r,k}=
    \frac{d_r}{e^r+d_r} d \sum_{i=0}^{r-1}c_{r-1,i}(c_{i,k}-c_{i+1,k})-a c_{r,k-1}
    = d \sum_{i=0}^{r-1}c_{r,i}(c_{i,k}-c_{i+1,k})-ac_{r,k-1}.
  $$
  So the equality~\eqref{ecuacion intermedia}
  yields
  $$
    d \sum_{i=0}^{r-1}c_{r,i}(c_{i,k}-c_{i+1,k})-a c_{r-1,k-1}+ d c_{r,r} (c_{r,k}-c_{r+1,k})=e c_{r+1,k},
  $$
  which is~\eqref{igualdad 518 mayor que cero} for $r$. This completes the inductive step, proves~\eqref{igualdad 518 mayor que cero} and
  concludes the proof.
\end{proof}

\section{Roots of $R_k$}
\label{seccion 6}
In view of Corollary~\ref{Corolario anda}, we want to analyze the polynomials $R_k$ and their roots.
In particular we are interested in the following question: Given a pair $(a,d)$, does there exists  $k\in\mathds{N}$ such that $R_k(a,d)=0$?
If the answer is no, then for each $n\ge 2$ the pair $(a,d)$ defines a unique twisting map via Theorem~\ref{anda}. Else
there is no twisting map satisfying item~(1) of Proposition~\ref{clasificacion} for that $(a,d)$.

Fix $(a,d)$. If $-a=1-d=e$, then $d_k=kd e^{k-1}$. So $R_k(a,d)=e^k+kd e^{k-1}=e^{k-1}(kd+e)=0$ if and only if $0=kd+e=d(k-1)+1$. In that case
$a=d-1=-\frac{k}{k-1}$.

Now assume  $-a\ne 1-d=e$. Then
$$
  R_k(a,d)=e^k+d\frac{e^k-(-a)^k}{e+a}=\frac{e^{k+1}+a e^k+d e^k-d(-a)^k}{e+a},
$$
so $R_k(a,d)=0$ if and only if
$$
  0=e^{k+1}+a e^k+d e^k-d(-a)^k=(a+1)e^k-(1-e)(-a)^k.
$$
In that case $e\ne 0$ and $e\ne 1$, since $e=0$ leads to $a=0$ and $e=1$ leads to
$a=-1$, which contradicts $-a\ne e$. So  $R_k(a,d)=0$ if and only if
\begin{equation}\label{ecuacion fraccion}
  \frac{1+a}{1-e}=\frac{(-a)^k}{e^k}.
\end{equation}

This condition is much easier to handle than the original condition. Assume $K\subset\mathds{C}$.
If~\eqref{ecuacion fraccion} is satisfied and $\left|\frac{a}{e}\right|\ne 1$, then
$$
  k \log\left|\frac{a}{e}\right|=\log\left|\frac{1+a}{1-e}\right|.
$$
Moreover, if~\eqref{ecuacion fraccion} is satisfied and $\left|\frac{a}{e}\right|= 1$, then necessarily $\left|\frac{1+a}{1-e}\right|=1$,
and an elementary computation shows that then either $-a=e$ or $-a=\bar e$, where $\bar e$ is the complex conjugate of $e$.
The first case is impossible by assumption, and another elementary computation shows that~\eqref{ecuacion fraccion} is satisfied
if and only if $r=\frac{u^{2k}-1}{u^{2k-1}-u}$, where $u=\frac{e}{|e|}$ is a unitary complex number and $r=|e|$.
Hence, if $K\subset\mathds{C}$, we can describe a complete
strategy in order to determine if for a given pair $(a,d)$ we have
$R_k(a,d)\ne 0$ for all $k$.
\begin{enumerate}
  \item If $-a=1-d=:e$ then $R_k(a,d)\ne 0$ for all $k$ if and only if $(a,d)\ne \left(-\frac{k+1}{k},-\frac{1}{k}\right)$ for all $k\in\mathds{N}$.
  \item If $-a\ne 1-d=e$, then
  \begin{enumerate}
    \item[a)] if $\left|\frac{a}{e}\right|\ne 1$ then
    \begin{enumerate}
      \item[i)] if $\frac{\log\left|\frac{1+a}{1-e}\right|}{\log\left|\frac{a}{e}\right|}\notin \mathds{N}_{\ge 2}$, then $R_k(a,d)\ne 0$ for
          all $k$.
      \item[ii)] if $\frac{\log\left|\frac{1+a}{1-e}\right|}{\log\left|\frac{a}{e}\right|}=k_0\in \mathds{N}_{\ge 2}$, then $R_k(a,d)\ne 0$ for
          all $k$ if and only if $R_{k_0}(a,d)\ne 0$.
    \end{enumerate}
    \item[b)]  if $\left|\frac{a}{e}\right| = 1$ and $a\ne \bar e$, then $R_k(a,d)\ne 0$ for all $k$.
    \item[c)]  if $\left|\frac{a}{e}\right| = 1$ and $a= \bar e$, then $R_k(a,d)\ne 0$ for all $k$ if and only if $r\ne
        \frac{u^{2k}-1}{u^{2k-1}-u}$  for all
    $k\in \mathds{N}_{\ge 2}$, where $u=\frac{e}{|e|}$ and $r=|e|$.
  \end{enumerate}
\end{enumerate}

\begin{example}
  This algorithm determines that $d=0$, $a=-1$ is a valid choice, since then $e=1$ and we are in the case (1), with
  $(a,d)=(-1,0)\ne \left(-\frac{k+1}{k},-\frac{1}{k}\right)$ for all $k\in\mathds{N}$. One verifies that this example corresponds to~\cite{CG}*{Example 5.4}.
\end{example}

\section{The case $y^nx=d x^{n+1}-x^n y+(a+1)y^{n+1}$}
\label{seccion 7}
Sections~\ref{seccion 7} and~\ref{seccion familia} are dedicated to the analysis of the case~(2) of
Proposition~\ref{clasificacion}.
So in this three sections
$\sigma$ is a twisting map and $Y$ and $M$ are as in Corollary~\ref{graded condition} and $\mm=M-Y$.
Moreover there exists $n\in\mathds{N}$ with $n\ge 2$, such that $\mm_{k*}=\mm_{0*}$ for $1< k< n$ and
$\mm_{n*}=d E_0-E_1+a E_{n+1}$ for some $a,d\in K^{\times}$ with $d(a+1)=1$. This implies that we are dealing with the case
$$
  y^nx=d x^{n+1}-x^n y+(a+1)y^{n+1},\quad\text{with $a\ne 0,-1$}.
$$
The corresponding matrix equality is
\begin{equation}
\label{igualdad matricial para Y^n M tilde}
  Y^n\mm=d M^{n+1}-M^n Y+a Y^{n+1}.
\end{equation}
We will use this equality and similar matrix equalities in order to compute the different possibilities for the resulting twisting maps. There is only one choice for the first $2n-1$ rows, but four choices for
the $2n$'th row. In each of the four cases
the rows are determined until the row $3n-1$. One can determine the rows $3n$, $3n+1$ and $3n+2$, in each of the four
cases, and
obtain again four cases in each of them (so we have 16 cases).
As the number of possibilities grows, the systems of equations get more and more involved, so a full classification seems very difficult to achieve.

However, in section~\ref{seccion familia} we manage to describe a family of twisting maps, such that four of the above mentioned 16 cases coincide in
the first $3n+2$ rows with members of this family.

In the present section we will establish some technical formulas. On one hand with these technical results
 one can carry out the computations mentioned above in order to determine the possibilities for the first rows of $\mm$. On the other hand, in
 section~\ref{seccion familia}, they will allow us to describe a certain
family of twisting maps called $B(a,L)$. The following lemma is a result on lower infinite Hessenberg matrices, that should be well-known, but we couldn't find any reference to it in the literature.

\begin{lemma}\label{producto en la diagonal}
  Let $A\in L(K^{\mathds{N}_0})$ be an infinite matrix such that $A_{ij}=0$ for $j>i+1$. Then
  $$
  (A^{n+1})_{j,j+n+1}=\prod_{k=0}^n A_{j+k,j+k+1}.
  $$
\end{lemma}
\begin{proof}
  Since $w(A)=-1$, by Proposition~\ref{proposition valuacion} we have $w(A^{n+1})=-n-1$. A direct computation using
  Proposition~\ref{proposition valuacion},
  shows that then $(A^{n+1})^{(-n-1)}=(A^{(-1)})^{n+1}$, and so
  $$
    (A^{n+1})_{j,j+n+1}=((A^{n+1})^{(-n-1)})_{j,j+n+1}=\sum_{i_1,i_2,\dots,i_n}A^{(-1)}_{j,i_1}A^{(-1)}_{i_1,i_2}\cdots A^{(-1)}_{i_n,j+n+1}.
  $$
  But
  $$
  A^{(-1)}_{ij}=\begin{cases}
                     A_{i,i+1}, & \mbox{if } j=i+1 \\
                     0, & \mbox{otherwise},
                   \end{cases}
  $$
  hence the only term that survives in the sum is the term with $i_{k+1}=i_k+1$ for all $k$ and so
  $$
    (A^{n+1})_{j,j+n+1}=A^{(-1)}_{j,j+1}A^{(-1)}_{j+1,j+2}\cdots A^{(-1)}_{j+n,j+n+1}=\prod_{k=0}^n A_{j+k,j+k+1},
  $$
  as desired.
\end{proof}

\begin{proposition}\label{condiciones para BnL}
  Let $\mm$ and $a$ be as above. We have
  \begin{equation}\label{MYM}
    \mm Y^{n-1}\mm=a \sum_{j=0}^{n} Y^j \mm Y^{n-j}-a^2 Y^{n+1}
  \end{equation}
  and
  \begin{equation}\label{potencia se anula}
    (\mm-aY)^{n+1}=0.
  \end{equation}
  Moreover, set $m_i:=\mm_{i,i+1}$ and let $L:=(L_1=n,L_2,L_3,\dots)$ be the increasing sequence of integers such
  that $m_{L_i}\ne 0$ and $m_i=0$ if $L_k<i<L_{k+1}$  for some $k$. Then
  \begin{enumerate}
    \item $\prod_{k=1}^{n+1}(m_{j+k}-a)=0$ for all $j\ge 0$.
    \item If $m_j\ne 0$, then $m_{j+k}=0$ for $k=1,\dots,n-1$.
    \item We have $m_{L_i}=a$ and $(L_{i+1}-L_{i})\in\{n,n+1\}$ for all $i\ge 1$.
  \end{enumerate}
\end{proposition}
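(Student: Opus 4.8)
The plan is to establish the two displayed identities \eqref{MYM} and \eqref{potencia se anula} first, and then to read off the three entrywise statements from them with the help of Lemma~\ref{producto en la diagonal}.

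To obtain \eqref{MYM} I would start from the level-$n$ instance of the equation in Corollary~\ref{graded condition}, which under the present hypotheses (so that $M_{n0}=d$, $M_{n1}=-1$, $M_{nj}=0$ for $2\le j\le n$, and $M_{n,n+1}=a+1$) reads $Y^nM=dM^{n+1}-M^nY+(a+1)Y^{n+1}$. Since $\mm_{k*}=\mm_{0*}$ for $0<k<n$, Lemma~\ref{mixtos 2} rewrites $M^{n+1}$ and $M^nY$ in terms of $Y$-powers and the mixed words $Y^j\mm Y^{n-j}$ and $\mm Y^{n-1}\mm$. Substituting these, together with $Y^nM=Y^n\mm+Y^{n+1}$, and collecting terms reduces the equation to $(1-d)\sum_{j=0}^{n}Y^j\mm Y^{n-j}=(d+a-1)Y^{n+1}+d\,\mm Y^{n-1}\mm$; solving for $\mm Y^{n-1}\mm$ (dividing by $d\ne0$, which holds since $d(a+1)=1$) and simplifying the coefficients $\tfrac{1-d}{d}$ and $\tfrac{1-a-d}{d}$ to $a$ and $-a^2$ by means of $d=\tfrac1{a+1}$ yields \eqref{MYM}. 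This step is routine bookkeeping, though the signs must be tracked with care.

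For \eqref{potencia se anula} I would expand $(\mm-aY)^{n+1}$ as a sum over the words of length $n+1$ in the two letters $\mm$ and $-aY$. The key point is that any word containing three or more letters $\mm$ vanishes, since it then has an interior block $\mm Y^{i}\mm$ with $i\le n+1-3=n-2$, which is $0$ by Lemma~\ref{mixtos 2}; similarly, a word with exactly two $\mm$'s survives only if the gap between them has length $\ge n-1$, hence exactly $n-1$, so the only such word is $\mm(-aY)^{n-1}\mm$. Therefore $(\mm-aY)^{n+1}=(-a)^{n-1}\mm Y^{n-1}\mm+(-a)^n\sum_{j=0}^{n}Y^j\mm Y^{n-j}+(-a)^{n+1}Y^{n+1}$, and substituting the value of $\mm Y^{n-1}\mm$ from \eqref{MYM} makes the coefficients of $\sum_j Y^j\mm Y^{n-j}$ and of $Y^{n+1}$ both vanish. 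I expect this combinatorial expansion, together with the final cancellation, to be the main obstacle.

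For the statements about the entries, set $N:=\mm-aY$, so that $N_{ij}=0$ for $j>i+1$ and $N_{i,i+1}=m_i-a$. Lemma~\ref{producto en la diagonal} applied to $N$ gives $(N^{n+1})_{j,j+n+1}=\prod_{k=0}^{n}(m_{j+k}-a)$, which vanishes for all $j$ by \eqref{potencia se anula}; this is~(1) after a shift of index. For~(2) I would evaluate the identities $\mm Y^{k-1}\mm=0$, $1\le k\le n-1$ (valid by $\mm^2=0$ and Lemma~\ref{mixtos 2}), at the entry $(j,j+k+1)$: because $\mm_{j,l}=0$ for $l>j+1$ and $\mm_{l+k-1,\,j+k+1}=0$ for $l<j+1$, the sum over $l$ reduces to the single term $m_j m_{j+k}$, so $m_j\ne0$ forces $m_{j+k}=0$ for $k=1,\dots,n-1$. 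Finally, for~(3): $m_{L_1}=m_n=\mm_{n,n+1}=a$ by hypothesis; applying~(2) at $L_i$ shows $m_{L_i+1}=\dots=m_{L_i+n-1}=0$, so $L_{i+1}\ge L_i+n$, while if $m_{L_i+n}$ and $m_{L_i+n+1}$ were both zero then~(1) with $j=L_i$ would give $(-a)^{n+1}=0$; hence the next index $L_{i+1}$ exists (so $L$ is infinite) and $L_{i+1}-L_i\in\{n,n+1\}$. For $i\ge2$ one then applies~(1) with $j=L_i-n$: the $n+1$ indices involved are $L_i-n+1,\dots,L_i,L_i+1$, and each of them except $L_i$ carries $m=0$ (those below $L_i$ lie strictly between $L_{i-1}$ and $L_i$ because $L_i\ge L_{i-1}+n$, and $m_{L_i+1}=0$ by~(2)), so the product reduces to $(m_{L_i}-a)(-a)^n$, which forces $m_{L_i}=a$.
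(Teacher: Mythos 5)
Your proposal is correct and follows essentially the same route as the paper: derive \eqref{MYM} from the level-$n$ relation of Corollary~\ref{graded condition} via Lemma~\ref{mixtos 2}, identify the resulting identity with the expansion of $(\mm-aY)^{n+1}$ (the paper writes $(aY-\mm)^{n+1}$, the same up to sign), and then obtain (1) from Lemma~\ref{producto en la diagonal}, (2) from the entries of $\mm Y^{k-1}\mm=0$, and (3) by the same contradiction arguments. The only cosmetic difference is the index window used to force $m_{L_i}=a$ ($j=L_i-n$ instead of the paper's $j=L_i-2$), which changes nothing of substance.
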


\begin{proof}
  From~\eqref{igualdad matricial para Y^n M tilde} and Lemma~\ref{mixtos 2} we obtain
  \begin{eqnarray*}
    Y^n \mm &=& d\left(Y^{n+1}+\mm Y^{n-1}\mm + \sum_{j=0}^n Y^j \mm Y^{n-j} \right)-\left(Y^{n}+\sum_{j=0}^{n-1} Y^j \mm Y^{n-1-j}\right)Y+a Y^{n+1}
    \\
    &=& (d-1) \sum_{j=0}^{n-1} Y^j \mm Y^{n-j}+d Y^n \mm+d\mm Y^{n-1}\mm+(d-1+a)Y^{n+1}.
  \end{eqnarray*}
  So
  \begin{align*}
    0&=d\mm Y^{n-1}\mm-(1-d) \sum_{j=0}^{n} Y^j \mm Y^{n-j}+(d-1+a)Y^{n+1}\\
    &=d\mm Y^{n-1}\mm-ad \sum_{j=0}^{n} Y^j \mm Y^{n-j}+(a-ad)Y^{n+1}
  \end{align*}
  where the second equality follows from $(1-d)=ad$.
  Using again $(1-d)=ad$ and dividing by $d$ we obtain~\eqref{MYM}. Multiplying by $a^{n-1}$ we obtain
  $$
    0=\mm (aY)^{n-1}\mm- \sum_{j=0}^{n} (aY)^j \mm (aY)^{n-j}+(aY)^{n+1}.
  $$
  But the right hand side is the expansion of $(aY-\mm)^{n+1}$, since in that expansion
  the only term with two times the factor $\mm$ is $\mm (aY)^{n-1}\mm$. This proves~\eqref{potencia se anula}.
  Item~(1) follows directly from~\eqref{potencia se anula} and Lemma~\ref{producto en la diagonal}, since
  $(\mm-aY)_{i,i+1}=m_i-a$.

  By Lemma~\ref{mixtos 2} we have $\mm Y^k\mm =0$ for $k=0,\dots,n-2$. But then
  $$
    0=\left(\mm Y^{k-1}\mm\right)_{j,j+k+1}=\mm_{j,j+1} \mm_{j+k,j+k+1}=m_j m_{j+k}
  $$
  for $k=1,\dots,n-1$, so item~(2) is true.

  Item~(2) implies that $L_{i+1}-L_i\ge n$. Assume by contradiction that $L_{i+1}-L_i> n+1$. Then
  $$
    \prod_{k=1}^{n+1}(m_{L_i+k}-a)=(-a)^{n+1}\ne 0,
  $$
  contradicting item~(1), hence $(L_{i+1}-L_{i})\in\{n,n+1\}$ for all $i\ge 1$.

  Finally, if $m_{L_i}\ne a$ for some $i>1$, then
  $$
        \prod_{k=1}^{n+1}(m_{L_i-2+k}-a)=(-a)^{n}(m_{L_i}-a)\ne 0,
  $$
  contradicting again item~(1), hence $m_{L_i}=a$ for all $i$.
\end{proof}

\section{The family $B(a,L)$ and quasi-balanced sequences}
\label{seccion familia}
In this section we will describe a family of twisted tensor products that arise in the case~(2) of Proposition~\ref{clasificacion}.
So $\sigma$ is a twisting map and $Y$ and $M$ are as in Corollary~\ref{graded condition} and $\mm=M-Y$.
Moreover there exists $n\in\mathds{N}$ with $n\ge 2$, such that $\mm_{k*}=\mm_{0*}$ for $1< k< n$ and
$\mm_{n*}=d E_0-E_1+a E_{n+1}$ for some $a,d\in K^{\times}$ with $d(a+1)=1$.

By Proposition~\ref{condiciones para BnL}(3), the twisting map defines a sequence $L=(L_1,L_2,\dots)$ such that $\mm_{L_i,L_i+1}=a$ and such that
$\mm_{k,k+1}=0$ if $L_i<k<L_{i+1}$ for some $i$.
Moreover $L_{j+1}-L_j\in\{n,n+1\}$ for all $j>1$, so $L$ belongs to the set of sequences
$$
  \Delta(n,n+1)=\{L\in\mathds{N}^{\mathds{N}}\ :\ L_1=n\text{ and }L_{j+1}-L_j\in\{n,n+1\}\text{ for all $j>1$}\}.
$$
We will construct an infinite matrix $M(L,a)$, where $L\in \Delta(n,n+1)$, $n\ge 2$ and $a\ne 0,1$; by setting
$d:=\frac{1}{1+a}$, and defining each row of the infinite matrix in the following way
$$
  M(L,a)_{j*}:=\begin{cases}
                E_0-E_1, & \mbox{if } j<L_1 \\
            d^k (E_0-E_1), & \mbox{if } L_k<j<L_{k+1} \text{ for some $k$} \\
            d^k E_0- d^{k-1}E_1+a E_{L_k+1}, & \mbox{if $j=L_k$} \text{ for some $k$.}
          \end{cases}
$$
  Note that $M(L,a)_{ij}=0$ if $j\notin\{0,1\} \cup \{L_k\}_{k\in\mathds{N}}$.
We will prove in Propositions~\ref{LL es necesario para BnL} and~\ref{LL es suficiente para BnL}  that
 the infinite matrix $\mm:=M(L,a)$ defines a twisting
map via Remark~\ref{condiciones de corolario pero para mm}, if and only if  $L$ is a quasi-balanced sequence, where the set of quasi-balanced
sequences $\mathcal{L}$ is defined as follows:
$$
  \mathcal{L}:= \{L\in\Delta(n,n+1),\ L_{r}-1\le L_j+L_{r-j}\le L_{r}\text{ for all $0<j<r$}\}.
$$
We first describe some properties of quasi-balanced sequences.

\begin{definition}\label{Def quasi balanced}
  We say that a sequence $L\in\Delta(n,n+1)$ is $r$-balanced, if $L_{r}=L_j+L_{r-j}$ for all $0<j<r$. We say that
  the sequence is $r$-quasi-balanced, if $L_{r}-1\le L_j+L_{r-j}\le L_{r}$ for all $0<j<r$. We also say that a finite sequence
  $(L_1,\dots,L_{r_0})$ is quasi-balanced, if $L_{r}-1\le L_j+L_{r-j}\le L_{r}$ for all $0<j<r\le r_0$.
\end{definition}
Note that every sequence $L\in\Delta(n,n+1)$ is trivially $1$-balanced, and it is also easy to see that
it is $2$-quasi-balanced and $3$-quasi-balanced. However the sequences beginning with
$(n,2n,3n+1,4n+2,\dots)$ or $(n,2n+1,3n+1,4n+1,\dots)$ are not $4$-quasi-balanced.

 Clearly a sequence
$L\in\Delta(n,n+1)$ is quasi-balanced (i.e. belongs to $\mathcal{L}$), if and only if it is $r$-quasi-balanced for all $r$, if and only if all
the sequences $L_{\le r}:=(L_1,\dots,L_r)$ are quasi-balanced.
For a given sequence $L$ and  $0<j<r $ we define $\Delta_{r,j}:=L_r-L_j-L_{r-j}$. The fact that $(L_1,\dots,L_{r_0})$ is quasi-balanced
is equivalent to the fact that $\Delta_{r,j}\in \{0,1\}$ for all $0<j<r\le r_0$.
\begin{proposition}\label{continuar balanceado}
  Assume that the finite sequence $(L_1,\dots,L_{r_0})$ is quasi-balanced. Then either
  $(L_1,\dots,L_{r_0},L_{r_0}+n)$ is quasi-balanced or $(L_1,\dots,L_{r_0},L_{r_0}+n+1)$ is quasi-balanced.
\end{proposition}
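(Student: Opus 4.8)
The plan is to phrase everything through the quantities $\Delta_{r,j}:=L_r-L_j-L_{r-j}$ (adopting the harmless convention $L_0=0$), so that quasi-balancedness of $(L_1,\dots,L_{r_0})$ means exactly $0\le\Delta_{r,j}\le 1$ for all $0<j<r\le r_0$. Appending a new term $L_{r_0+1}$ does not affect any of these inequalities for $r\le r_0$, so it suffices to choose $L_{r_0+1}\in\{L_{r_0}+n,\,L_{r_0}+n+1\}$ so that $\Delta_{r_0+1,j}\in\{0,1\}$ for $0<j\le r_0$. For $j=r_0$ this reads $\Delta_{r_0+1,r_0}=L_{r_0+1}-L_{r_0}-n\in\{0,1\}$, which holds for either choice; for $1\le j\le r_0-1$ I would use the identity
\[
\Delta_{r_0+1,j}=\Delta_{r_0,j}+\varepsilon-\delta_j,\qquad \varepsilon:=L_{r_0+1}-L_{r_0}-n\in\{0,1\},\quad \delta_j:=L_{r_0+1-j}-L_{r_0-j}-n\in\{0,1\},
\]
where $\varepsilon$ is the quantity we get to pick and $\delta_j$ is determined by the given sequence.

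Since $\Delta_{r_0,j}\in\{0,1\}$, inspecting the four cases of $(\Delta_{r_0,j},\delta_j)$ shows that the choice $\varepsilon=0$ makes $\Delta_{r_0+1,j}\in\{0,1\}$ for every $j$ \emph{unless} $\Delta_{r_0,j}=0$ and $\delta_j=1$ for some $j$, and the choice $\varepsilon=1$ works for every $j$ \emph{unless} $\Delta_{r_0,j}=1$ and $\delta_j=0$ for some $j$. Writing $d_i:=L_{i+1}-L_i-n\in\{0,1\}$, so that $\delta_j=d_{r_0-j}$, and using the symmetry $\Delta_{r_0,j}=\Delta_{r_0,r_0-j}$, these two obstructions become: (A) there is an index $i$ with $\Delta_{r_0,i}=0$ and $d_i=1$; (B) there is an index $i$ with $\Delta_{r_0,i}=1$ and $d_i=0$. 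Thus the proposition is equivalent to the assertion that (A) and (B) cannot both occur, and then one takes $L_{r_0+1}=L_{r_0}+n$ if (A) fails and $L_{r_0+1}=L_{r_0}+n+1$ otherwise.

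The hard part is ruling out that (A) and (B) both hold, and here I expect one must use the full hypothesis ($\Delta_{r,j}\in\{0,1\}$ for all $r\le r_0$, not merely for $r=r_0$). From the telescoping identity $\Delta_{r_0,j}-\Delta_{r_0,j-1}=d_{r_0-j}-d_{j-1}$ together with $0\le\Delta_{r_0,\cdot}\le 1$ one deduces that an instance of (A) forces $\Delta_{r_0,\cdot}\equiv 0$ on a short block together with two nearby values of $d$ equal to $1$, while an instance of (B) forces $\Delta_{r_0,\cdot}\equiv 1$ on a short block together with two nearby values of $d$ equal to $0$; translating these two local shapes back to $L$ (equivalently, to the partial sums $\sum d_i$) and feeding the resulting repeated jumps of size $n+1$, respectively $n$, into an inequality $0\le L_r-L_j-L_{r-j}\le 1$ at a suitably chosen $r\le r_0$ produces a violation of super-additivity at a smaller level. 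This is exactly the statement that the increment word $d_1d_2\cdots d_{r_0-1}$ of a quasi-balanced sequence is a balanced word in the sense of combinatorics on words, and carrying the contradiction through cleanly — most likely by a careful but elementary case analysis on the relative positions of the two local shapes — is the only real content of the argument; with that in hand the construction above finishes the proof.
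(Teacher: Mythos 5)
Your setup is fine and matches the paper's: writing $\Delta_{r,j}=L_r-L_j-L_{r-j}$, the problem reduces to showing that the two obstructions cannot coexist, i.e.\ that one cannot have both some $j_0$ with $\Delta_{r_0,j_0}=0,\ \delta_{j_0}=1$ (so $\Delta_{r_0+1,j_0}=-1$ for the choice $\varepsilon=0$) and some $j_1$ with $\Delta_{r_0,j_1}=1,\ \delta_{j_1}=0$ (so $\Delta_{r_0+1,j_1}=2$ for the choice $\varepsilon=1$). But this incompatibility is precisely the content of the proposition, and you do not prove it: you only say that the two ``local shapes'' should clash with super-additivity at a smaller level and that ``a careful but elementary case analysis'' would finish, appealing along the way to an unproved identification of the increment word with a balanced word. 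That is a genuine gap -- the one nontrivial step of the statement is left as a conjecture inside your own argument.

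For the record, the paper closes exactly this gap with a short computation rather than a case analysis on local shapes. Suppose both obstructions occur, i.e.\ $\Delta^{-}_{r_0+1,j_0}=-1$ and $\Delta^{-}_{r_0+1,j_1}=1$ where $\Delta^{-}_{r_0+1,j}:=L_{r_0}+n-L_j-L_{r_0+1-j}$. By the symmetry $\Delta^{-}_{r_0+1,j}=\Delta^{-}_{r_0+1,r_0+1-j}$ one may assume $j_1>j_0$; setting $s:=j_1-j_0$ and inserting $L_s-L_s$ into the difference gives
\[
2=\Delta^{-}_{r_0+1,j_1}-\Delta^{-}_{r_0+1,j_0}
=\bigl(L_{r_0+1-j_0}-L_s-L_{r_0+1-j_1}\bigr)-\bigl(L_{j_1}-L_s-L_{j_0}\bigr)
=\Delta_{r_0+1-j_0,s}-\Delta_{j_1,s},
\]
and since $r_0+1-j_0\le r_0$ and $j_1\le r_0$, both terms lie in $\{0,1\}$ by the hypothesis on $(L_1,\dots,L_{r_0})$, a contradiction. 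If you add this (or any complete argument for the incompatibility of (A) and (B)), your proof is complete and essentially coincides with the paper's.
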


\begin{proof}
  We want to prove that either
  \begin{equation} \label{Delta menos}
    \Delta^{-}_{r_0+1,j}:=L_{r_0}+n-L_j-L_{r_0+1-j}\in\{0,1\}\ \text{for all $0<j<r_0+1$}
  \end{equation}
  or that
  \begin{equation}\label{Delta plus}
    \Delta^{+}_{r_0+1,j}:=\Delta^{-}_{r_0+1,j}+1\in\{0,1\}\ \text{for all $0<j<r_0+1$}.
  \end{equation}
  Note that
  $$
    \Delta^{-}_{r_0+1,j}=\Delta_{r_0,j}+n-(L_{r_0+1-j}- L_{r_0-j})\in\{\Delta_{r_0,j},\Delta_{r_0,j}-1\}\subset\{-1,0,1\},
  $$
  hence $\Delta^{+}_{r_0+1,j}\in\{0,1,2\}$.

  Now assume by contradiction that there exist $j_0,j_1$ such that $\Delta^{-}_{r_0+1,j_0}=-1$ and such that $\Delta^{+}_{r_0+1,j_1}=2$, i.e.,
  $\Delta^{-}_{r_0+1,j_1}=1$. We can assume that $j_1>j_0$ since $\Delta^{-}_{r_0+1,j}=\Delta^{-}_{r_0+1,r_0+1-j}$.
  Now we set $s:=j_1-j_0=(r_0+1-j_0)-(r_0+1-j_1)>0$ and obtain
  \begin{eqnarray*}
   2 &=& \Delta^{-}_{r_0+1,j_1}-\Delta^{-}_{r_0+1,j_0} \\
   &=& -L_{r_0+1-j_1}- L_{j_1}+L_{r_0+1-j_0}+ L_{j_0}+L_s-L_s \\
   &=& L_{r_0+1-j_0}-L_s-L_{r_0+1-j_1}-(L_{j_1}-L_{j_0}-L_s) \\
   &=&  \Delta(r+1-j_0,s)- \Delta(j_1,s).
  \end{eqnarray*}
  But $\Delta(r+1-j_0,s), \Delta(j_1,s)\in\{0,1\}$, so we obtain a contradiction, which proves that one of~\eqref{Delta menos}
  or~\eqref{Delta plus} is necessarily true. This concludes the proof.
\end{proof}

\begin{corollary}
  If $\tilde L=(L_1,\dots,L_r)$ is quasi-balanced, then there exists $L\in\mathcal{L}$ such that $L_{\le r}=\tilde L$.
\end{corollary}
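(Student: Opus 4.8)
The plan is to build the infinite sequence $L$ by a greedy recursion, repeatedly invoking Proposition~\ref{continuar balanceado} to append one term at a time while preserving quasi-balancedness, and then to observe that the resulting sequence lies in $\mathcal{L}$ precisely because every one of its finite truncations is quasi-balanced.

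First I would initialize $(L_1,\dots,L_r):=\tilde L$, which is quasi-balanced by hypothesis; since $\tilde L$ is an initial segment of a sequence in $\Delta(n,n+1)$ we have $L_1=n$. Then, inductively, suppose that for some $s\ge r$ a quasi-balanced finite sequence $(L_1,\dots,L_s)$ extending $\tilde L$ has been constructed. By Proposition~\ref{continuar balanceado}, at least one of $(L_1,\dots,L_s,L_s+n)$ and $(L_1,\dots,L_s,L_s+n+1)$ is quasi-balanced; choose $L_{s+1}\in\{L_s+n,\,L_s+n+1\}$ so that $(L_1,\dots,L_{s+1})$ is quasi-balanced (if both choices work, pick either). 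Iterating this produces an infinite sequence $L=(L_1,L_2,\dots)$ with $L_1=n$ and $L_{s+1}-L_s\in\{n,n+1\}$ for every $s$, hence $L\in\Delta(n,n+1)$.

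It then only remains to verify $L\in\mathcal{L}$. Fix $0<j<t$. By construction the truncation $(L_1,\dots,L_t)$ is quasi-balanced, so $L_t-1\le L_j+L_{t-j}\le L_t$. As this holds for all such pairs $j,t$, the definition of $\mathcal{L}$ gives $L\in\mathcal{L}$, and by construction $L_{\le r}=\tilde L$, as required. There is no real obstacle here: the only subtlety is that quasi-balancedness of an infinite sequence is a statement about all of its finite prefixes at once, which is exactly the invariant the recursion maintains, with Proposition~\ref{continuar balanceado} supplying the single inductive step.
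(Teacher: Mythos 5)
Your proposal is correct and is essentially the paper's own argument: the paper likewise constructs $L_{r+1},L_{r+2},\dots$ inductively via Proposition~\ref{continuar balanceado}, and quasi-balancedness of the infinite sequence is exactly quasi-balancedness of all its truncations. Your write-up merely spells out the invariant that the paper's one-line proof leaves implicit.
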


\begin{proof}
  Construct inductively $L_{r+1}$, $L_{r+2}$,\dots using Proposition~\ref{continuar balanceado}.
\end{proof}

\begin{lemma}\label{no hay twisting}
  If $L\in \Delta(n,n+1)$, but $L\notin\mathcal{L}$, then there exists $k,r\in\mathds{N}$ such that either
  \begin{enumerate}
    \item $L_{k+r}=L_k+L_r-1$ and $L_{r-1}+2<L_r$ or
    \item $L_{k+r}=L_k+L_r+2$ and $L_r+2<L_{r+1}$.
  \end{enumerate}
\end{lemma}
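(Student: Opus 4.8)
The plan is to localize the failure of quasi-balancedness at a single ``level'' and then read off the pair $(k,r)$ from the forced local structure of the increments $L_{i+1}-L_i\in\{n,n+1\}$. I will work throughout with the quantities $\Delta_{r,j}=L_r-L_j-L_{r-j}$ introduced above, so $L\in\mathcal L$ exactly when $\Delta_{r,j}\in\{0,1\}$ for all $0<j<r$, and I will freely use the symmetry $\Delta_{r,j}=\Delta_{r,r-j}$. First I would pick the smallest index $r_0$ for which $(L_1,\dots,L_{r_0})$ is not quasi-balanced; this exists since $L\notin\mathcal L$. Then $(L_1,\dots,L_{r_0-1})$ \emph{is} quasi-balanced, and there is a $j$ with $0<j<r_0$ and $\Delta_{r_0,j}\notin\{0,1\}$. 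Because $\Delta_{r_0,1}=(L_{r_0}-L_{r_0-1})-n\in\{0,1\}$ and $\Delta_{r_0,r_0-1}=\Delta_{r_0,1}$, every such $j$ must satisfy $2\le j\le r_0-2$; in particular $r_0\ge 4$, consistent with the fact that every sequence in $\Delta(n,n+1)$ is $3$-quasi-balanced.

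The key computation is the identity
$$
\Delta_{r_0,m}-\Delta_{r_0-1,m-1}=(L_{r_0}-L_{r_0-1})-(L_m-L_{m-1})\in\{-1,0,1\}\qquad\text{for }2\le m\le r_0-1,
$$
together with $\Delta_{r_0-1,m-1}\in\{0,1\}$ coming from quasi-balancedness of the prefix. Applied to $m=j$ this shows $\Delta_{r_0,j}\in\{-1,2\}$. If some bad index $j$ (at level $r_0$) has $\Delta_{r_0,j}=-1$, then the identity forces $\Delta_{r_0-1,j-1}=0$, $L_{r_0}-L_{r_0-1}=n$ and $L_j-L_{j-1}=n+1\ge 3$, hence $L_{j-1}+2<L_j$; then $k:=r_0-j$ and $r:=j$ satisfy $L_{k+r}=L_{r_0}=L_k+L_r-1$ and $L_{r-1}+2<L_r$, which is alternative~(1). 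So I may assume from now on that every bad index at level $r_0$ has $\Delta_{r_0,j}=2$, and there is at least one.

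For this remaining case I would let $j^*$ be the \emph{largest} $m\in\{2,\dots,r_0-2\}$ with $\Delta_{r_0,m}=2$. Applying the identity to $m=j^*$ and, by symmetry, to $m=r_0-j^*$ forces $L_{r_0}-L_{r_0-1}=n+1$, $L_{j^*}-L_{j^*-1}=n$ and $L_{r_0-j^*}-L_{r_0-j^*-1}=n$. I then claim that $L_{j^*+1}-L_{j^*}=n+1$. Indeed, if instead $L_{j^*+1}-L_{j^*}=n$, a direct substitution gives
$$
\Delta_{r_0,j^*+1}=L_{r_0}-(L_{j^*}+n)-(L_{r_0-j^*}-n)=\Delta_{r_0,j^*}=2,
$$
but $3\le j^*+1\le r_0-1$, and $\Delta_{r_0,r_0-1}=(L_{r_0}-L_{r_0-1})-n=1$ rules out $j^*+1=r_0-1$ while maximality of $j^*$ rules out $j^*+1\le r_0-2$, a contradiction. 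Hence $L_{j^*}+2<L_{j^*+1}$, and $k:=r_0-j^*$, $r:=j^*$ satisfy $L_{k+r}=L_{r_0}=L_k+L_r+2$ and $L_r+2<L_{r+1}$, which is alternative~(2).

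I expect the main obstacle to be precisely this last step. In the ``$-1$'' case the forced increment is $L_j-L_{j-1}=n+1$, i.e. the step immediately before the index $r:=j$ that alternative~(1) refers to, so the conclusion is immediate. In the ``$+2$'' case, however, $\Delta_{r_0,j}=2$ only pins down the increment $L_{r_0}-L_{r_0-1}$, not the increment $L_{j+1}-L_j$ that alternative~(2) asks about; one therefore has to propagate the ``surplus'' $\Delta_{r_0,\cdot}=2$ forward along the level $r_0$ to the maximal index $j^*$, where the step after it is forced to jump. Everything else is routine bookkeeping on the ranges $2\le j,j^*\le r_0-2$, which guarantees that every $\Delta$ and every increment written down in the argument is legitimately defined.
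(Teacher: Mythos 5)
Your proposal is correct and takes essentially the same route as the paper: minimal non-quasi-balanced prefix, the identity $\Delta_{r_0,j}-\Delta_{r_0-1,j-1}=(L_{r_0}-L_{r_0-1})-(L_j-L_{j-1})$ (which is the paper's recursion rewritten via the symmetry $\Delta_{r,j}=\Delta_{r,r-j}$) to force $\Delta_{r_0,j}\in\{-1,0,1,2\}$, then a case split into the values $-1$ and $2$ with an extremal bad index, reading off a forced increment $n+1>2$. The only (harmless) deviations are that in the $-1$ case you compare across levels and need no minimality of $j$, while the paper compares neighboring $\Delta$'s within level $r_0$ using the minimal bad index, and in the $+2$ case your propagation-by-contradiction argument reaches the same forced step $L_{j^*+1}-L_{j^*}=n+1$ that the paper gets directly from maximality.
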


\begin{proof}
  Let $m:=\min\{i: \ L_{\le i}\text{ is not quasi-balanced}\}$. For $0<j<m$ we have
  $$
    \Delta_{m,j}=\Delta_{m-1,j}+(L_{m}-L_{m-1})-(L_{m-j}-L_{m-j-1}).
  $$
  Since $\Delta_{m-1,j}\in\{0,1\}$, $(L_{m}-L_{m-1})\in\{n,n+1\}$ and $(L_{m-j}-L_{m-j-1})\in\{n,n+1\}$ necessarily
  $\Delta_{m,j}\in\{-1,0,1,2\}$. But $L_{\le m}$ is not quasi-balanced, so there exists $j$ such that either $\Delta_{m,j}=-1$ or $\Delta_{m,j}=2$.
  In the case $\Delta_{m,j}=-1$ set $r:=\min\{j:\ \Delta_{m,j}=-1\}$ and $k:=m-r$. Then $\Delta_{m,r-1}=0$, since
  $|\Delta_{m,r}-\Delta_{m,r-1}|\le 1$.
  But
  $$
    \Delta_{m,r-1}=\Delta_{m,r}+(L_{r}-L_{r-1})-(L_{m-r+1}-L_{m-r}),
  $$
  hence $0=-1+(L_{r}-L_{r-1})-(L_{m-r+1}-L_{m-r})$, and since $(L_{r}-L_{r-1}),(L_{m-r+1}-L_{m-r})\in\{n,n+1\}$, we have
  $L_{r}-L_{r-1}=n+1$ and $L_{m-r+1}-L_{m-r}=n$. So $L_{r}-L_{r-1}=n+1>2$ and together with $\Delta_{k+r,r}=L_{k+r}-L_k-L_r=-1$ this proves
  that we are in the case of~(1).

  On the other hand, in the case $\Delta_{m,j}=2$ set $r:=\max\{j:\ \Delta_{m,j}=2\}$ and $k:=m-r$. Then $\Delta_{m,r+1}=1$, since
  $|\Delta_{m,r}-\Delta_{m,r+1}|\le 1$.
  But
  $$
    \Delta_{m,r+1}=\Delta_{m,r}+(L_{m-r}-L_{m-r-1})-(L_{r+1}-L_{r}),
  $$
  hence $1=2+(L_{m-r}-L_{m-r-1})-(L_{r+1}-L_{r})$, and since $(L_{m-r}-L_{m-r-1}),(L_{r+1}-L_{r})\in\{n,n+1\}$, we have
  $(L_{r+1}-L_{r})=n+1$ and $L_{m-r}-L_{m-r-1}=n$. So $L_{r+1}-L_{r}=n+1>2$ and together with $\Delta_{k+r,r}=L_{k+r}-L_k-L_r=2$ this yields~(2) and
  concludes the proof.
\end{proof}

\begin{proposition}\label{LL es necesario para BnL}
  Let $L\in\Delta(n,n+1)$. If the infinite matrix $\mm:=M(L,a)$ defines a twisting map via Remark~\ref{condiciones de corolario pero para mm}, then
  $L\in\mathcal{L}$.
\end{proposition}

\begin{proof}
  Assume that $\mm=M(L,a)$ defines a twisting map and assume by contradiction that $L\in \Delta(n,n+1)\setminus\mathcal{L}$. In the first case of
  Lemma~\ref{no hay twisting} note that
  $L_{r-1}<L_r-2<L_r-1<L_r$, hence by definition $\mm_{L_r-2,*}=\mm_{L_r-1,*}=d^{r-1}(E_0-E_1)$, and so, by Lemma~\ref{mixtos}(1) we
  have $\mm Y^{L_r-2}\mm=0$. But
  then, using that $L_{k+r}=L_k+L_r-1$ and that $a,d,1-d\ne 0$ we obtain
  \begin{eqnarray*}
      0=(\mm Y^{L_r-2}\mm)_{L_k,0} &=& d^k \mm_{L_r-2,0}-d^{k-1} \mm_{L_r-1,0}+a \mm_{L_k+L_r-1,0}\\
    &=& d^{k+r-1}-d^{k+r-2}+a d^{k+r}\\
    &=& d^{r+k-2}(d-1+ad^2)= d^{r+k-2}(-ad+ad^2)\\
    &=&-ad^{k+r-1}(1-d)\ne 0,
  \end{eqnarray*}
  a contradiction which discards this case. On the other hand, in the second case of Lemma~\ref{no hay twisting} note that
  $L_{r}<L_r+1<L_r+2<L_{r+1}$, hence by definition $\mm_{L_r+1,*}=\mm_{L_r+2,*}=d^{r}(E_0-E_1)$, and so, by Lemma~\ref{mixtos}(1) we have
  $\mm Y^{L_r+1}\mm=0$. But
  then
  $$
    0=(\mm Y^{L_r+1}\mm)_{L_k,L_{r+k}+1}=d^k \mm_{L_r+1,L_{r+k}+1}-d^{k-1} \mm_{L_r+2,L_{r+k}+1}+a \mm_{L_k+1+L_r+1,L_{r+k}+1}.
  $$
  But $\mm_{L_r+1,L_{r+k}+1}= \mm_{L_r+2,L_{r+k}+1}=0$ and $L_{k+r}=L_k+L_r+2$, and so we arrive at
  $$
    0= a  \mm_{L_{r+k},L_{r+k}+1}=a^2\ne 0,
  $$
  a contradiction that discards the second case of Lemma~\ref{no hay twisting} and concludes the proof.
\end{proof}

\section{Existence of twisting maps for the family $B(a,L)$}
\label{seccion 10}
In Theorem~\ref{anda} we determined the form of the matrices $\mm$ corresponding to twisting maps of the family $A(n,d,a)$ and in
Corollary~\ref{Corolario anda}  we proved conversely  that each such matrix defines
actually a twisting map. Similarly, in Proposition~\ref{LL es necesario para BnL} we determined the form of the matrices $\mm$
corresponding to twisting maps of the family $B(a,L)$, namely, that necessarily $L\in \mathcal{L}$.
This last section is devoted to the proof that this condition is sufficient.
So, along this last section, we assume that $L\in\Delta(n,n+1)$ and we set
$$
\mm:=M(L,a).
$$
We will prove that if
$L\in\mathcal{L}$, then the matrix $\mm$ defines a twisting map via  Remark~\ref{condiciones de corolario pero para mm}.
\begin{proposition}\label{lema conditions to prove BNL}
  Assume that the following conditions are satisfied
  \begin{enumerate}
    \item $\mm Y^{k}\mm=0$ for $k\in \mathds{N}_0$ with $k,k+1\ne L_t$ for all $t$,
    \item $\mm Y^{L_r}\mm+Y\mm Y^{L_r-1}\mm=a Y^{L_r+1}\mm$ for all $r\in\mathds{N}$,
    \item $\mm Y^{L_r-1}\mm=a\sum_{i=0}^{L_r}Y^i\mm Y^{L_r-i}-ra^2Y^{L_r+1}$ for all $r\in\mathds{N}$.
  \end{enumerate}
  Set $L_0:=0$, then for all $r\in\mathds{N}_0$ and $L_r\le j<L_{r+1}$, we have
  \begin{equation}\label{potencia de M}
    d^r M^{j+1}=\sum_{i=0}^{j}Y^i \mm Y^{j-i}+(1-ra)Y^{j+1}.
  \end{equation}
\end{proposition}

\begin{proof}
  We will prove~\eqref{potencia de M} by induction on $j$. Note that it holds trivially for $j=L_0=0$.
  Now assume that~\eqref{potencia de M} is true for $j$ and we will prove that it holds for $j+1$. We have to consider two cases for this inductive
  step: either $L_r\le j<L_{r+1}-1$ for some $r\ge0$, or $j=L_r-1$ for some $r\ge 1$.

  First assume that $L_r\le j<L_{r+1}-1$. Then
  we multiply~\eqref{potencia de M} by $M=Y+\mm$ and obtain
  \begin{eqnarray*}
    d^r M^{j+2} & =& \left(\sum_{i=0}^{j}Y^i \mm Y^{j-i}+(1-ra)Y^{j+1}\right)(Y+\mm)\\
    & = &\sum_{i=0}^{j}Y^{i} \mm Y^{j+1-i}+(1-ra)Y^{j+2}+(1-ra) Y^{j+1}\mm +\sum_{i=0}^{j} Y^i \mm Y^{j-i}\mm\\
     & = &\sum_{i=0}^{j}Y^{i} \mm Y^{j+1-i}+ Y^{j+1}\mm+(1-ra)Y^{j+2}-ra Y^{j+1}\mm +\sum_{i=0}^{j} Y^{j-i} \mm Y^{i}\mm.
  \end{eqnarray*}
  We want to compute the last sum and note that for $i=0,\dots,j$ there are three possibilities:\vspace{1mm}\\
  1. $i=0$ or $L_t<i<L_{t+1}-1$ for some $0\le t <j$, and then $Y^{j-i}\mm Y^i \mm=0$ by condition~(1),\vspace{1mm}\\
  2. $i= L_t-1$ for some $1\le t\le r$, or \vspace{1mm}\\
  3. $i= L_t$ for some $1\le t\le r$. \vspace{1mm}\\
  Hence we have
  $$
  \sum_{i=0}^{j} Y^{j-i} \mm Y^{i}\mm=\sum_{t=1}^r \left(Y^{j-L_1+1} \mm Y^{L_t-1}\mm+ Y^{j-L_t} \mm Y^{L_t}\mm\right).
  $$
  But by condition~(2) we have
  $$
    Y \mm Y^{L_t-1}\mm +\mm Y^{L_t}\mm =a Y^{L_t+1}\mm\quad\text{for $1\le t \le r$,}
  $$
  and so, multiplying by $Y^{j-L_t}$, we obtain
  $$
    Y^{j-L_t+1} \mm Y^{L_t-1}\mm +Y^{j-L_t}\mm Y^{L_t}\mm =a Y^{j+1}\mm,
  $$
  and then
  $$
    \sum_{i=0}^{j} Y^{j-i} \mm Y^{i}\mm=\sum_{t=1}^{r}\left(Y^{j-L_t+1} \mm Y^{L_t-1}\mm +Y^{j-L_t}\mm Y^{L_t}\mm\right)=ra\, Y^{j+1}\mm
  $$
  which implies~\eqref{potencia de M} for $j+1$:
  $$
    d^r M^{j+2}=\sum_{i=0}^{j+1}Y^{i} \mm Y^{j+1-i} +(1-ra)Y^{j+2}.
  $$

  Now, if $j=L_{r}-1$ for some $r\ge 1$, then $L_{r-1}< j< L_r$ and by the same argument as before, from~\eqref{potencia de M}  we obtain
  $$
    d^{r-1} M^{j+2}= \sum_{i=0}^{j+1}Y^{i} \mm Y^{j+1-i}+(1-(r-1)a)Y^{j+2}-(r-1)a Y^{j+1}\mm +\sum_{i=0}^{j} Y^{j-i} \mm Y^{i}\mm,
  $$
  and we also obtain
  $$
    \sum_{i=0}^{j-1} Y^{j-i} \mm Y^{i}\mm=(r-1)a Y^{j+1}\mm.
  $$
  Consequently
  $$
    d^{r-1} M^{j+2}=\sum_{i=0}^{j+1}Y^{i} \mm Y^{j+1-i}+ (1-(r-1)a)Y^{j+2}+\mm Y^{j}\mm,
  $$
  which gives
  $$
    d^{r-1} M^{L_r+1}=\sum_{i=0}^{L_r}Y^{i} \mm Y^{L_r-i}+ (1-(r-1)a)Y^{L_r+1}+\mm Y^{L_r-1}\mm.
  $$
  Inserting into this equality the value of $\mm Y^{L_r-1}\mm$ according to condition~(3) we obtain
  $$
    d^{r-1} M^{L_r+1}=(1+a)\sum_{i=0}^{L_r}Y^{i} \mm Y^{L_r-i}+ (1+a)(1-ra)Y^{L_r+1},
  $$
  and using that $d(a+1)=1$ we obtain~\eqref{potencia de M} for $j+1=L_r$. This completes the inductive step and concludes the proof.
\end{proof}

\begin{proposition}\label{corolario conditions to prove BNL}
  Assume the hypotheses of
Proposition~\ref{lema conditions to prove BNL}.  Then~\eqref{fundamental matrix equality for mm} is valid for all $k \in\mathds{N}_0$.
\end{proposition}

\begin{proof}
  A straightforward computation shows that~\eqref{fundamental matrix equality for mm} holds for $k=0,1$.
 Moreover, for $k>0$ we have by definition
 $$
  \mm_{k*}:=\begin{cases}
            d^r (E_0-E_1), & \mbox{if } L_r<k<L_{r+1} \text{ for some $r\ge 0$} \\
            d^r E_0- d^{r-1}E_1+a E_{L_r+1}, & \mbox{if $k=L_r$} \text{ for some $r\ge 1$.}
          \end{cases}
$$
Hence, if  $L_r<k<L_{r+1}$ for some $r\ge 0$, then~\eqref{fundamental matrix equality for mm} reads
$$
Y^k \mm=\sum_{j= 0}^{k+1} \mm_{kj}M^{k+1-j} Y^j= d^r M^{k+1}-d^r M^k Y=d^r M^k \mm,
$$
and so we have to prove
\begin{equation}\label{por probar en caso general}
  Y^k \mm= d^r M^k \mm, \ \text{for all $r\ge 0$ and all $L_r<k<L_{r+1}$.}
\end{equation}
On the other hand, if $k=L_r$ for some $r>0$,
then~\eqref{fundamental matrix equality for mm} reads
$$
Y^k \mm=\sum_{j= 0}^{k+1} \mm_{kj}M^{k+1-j} Y^j= d^r M^{k+1}-d^{r-1} M^k Y+a Y^{k+1},
$$
and so we have to prove
\begin{equation}\label{por probar en caso k es Lr}
  Y^{L_r} \mm= d^r M^{L_r+1}-d^{r-1} M^{L_r} Y+a Y^{L_r+1}, \ \text{for all $r\ge 1$.}
\end{equation}
We first prove inductively~\eqref{por probar en caso general}. We know that it holds for $k=1$, and we will prove that if it holds for
$k$ with $L_r<k<L_{r+1}-1$ for some $r\ge 0$, then it holds for $k+1$, and that if it holds for $k=L_r-1$ for some $r\ge 1$, then it holds for
$k+2=L_r+1$.

  Let $k$ satisfy $L_r<k<L_{r+1}-1$ for some $r\ge 0$. From condition~(1) of Proposition~\ref{lema conditions to prove BNL}, we obtain
  $$
    M Y^{k}\mm=\mm Y^k \mm+Y^{k+1}\mm =Y^{k+1}\mm.
  $$
  Hence, if $Y^k\mm=d^r M^k\mm$ for such $k$, then $Y^{k+1}\mm=d^r M^{k+1}\mm$, which is~\eqref{por probar en caso general} for $k+1$.

  Assume now that $k=L_r-1$ for some $r\ge 1$, and that~\eqref{por probar en caso general} holds, i.e.,
  $$
  Y^{L_r-1}\mm=d^{r-1} M^{L_r-1}\mm
  $$
  Multiplying this by $M^2=\mm Y+Y\mm+Y^2$ from the left we obtain
  $$
    \mm Y^{L_r}\mm+Y \mm Y^{L_r-1}\mm+Y^{L_r+1}\mm=d^{r-1}M^{L_r+1}\mm
  $$
  and combined with $\mm Y^{L_r}\mm+Y \mm Y^{L_r-1}\mm=a Y^{L_r+1}\mm$, which holds by condition~(2) of
  Proposition~\ref{lema conditions to prove BNL}, this yields
  $$
    d^{r-1}M^{L_r+1}\mm=(a+1)Y^{L_r+1}\mm.
  $$
  Using $(a+1)d=1$ this gives $Y^{L_r+1}\mm=d^r M^{L_r+1}\mm$, which is~\eqref{por probar en caso general} for $k+2=L_r+1$.

  Finally we prove~\eqref{por probar en caso k es Lr}, which is~\eqref{fundamental matrix equality for mm} for $k=L_r$. For this consider the equality
  $$
    d^{r} M^{L_r+1}=\sum_{i=0}^{L_r}Y^i \mm Y^{L_r-i}+(1-ra)Y^{L_r+1},
  $$
  which is the equality~\eqref{potencia de M} for $j=L_r$, and the equality
  $$
    d^{r-1} M^{L_r}Y=\sum_{i=0}^{L_r-1}Y^i \mm Y^{L_r-i}+(1-(r-1)a)Y^{L_r+1},
  $$
  which is the equality~\eqref{potencia de M} for $j=L_r-1$, multiplied by $Y$ from the right.
  Subtracting the second equality from the first, we obtain
  \begin{eqnarray*}
     d^{r} M^{L_r+1}- d^{r-1} M^{L_r}Y  &=& \sum_{i=0}^{L_r}Y^i \mm Y^{L_r-i}-\sum_{i=0}^{L_r-1}Y^i \mm Y^{L_r-i}-a Y^{L_r+1} \\
    &=& Y^{L_r}\mm-a Y^{L_r+1},
  \end{eqnarray*}
  which is~\eqref{por probar en caso k es Lr}.  Hence~\eqref{fundamental matrix equality for mm} holds for $k=L_r$, concluding the proof.
\end{proof}

In order to prove that for $L\in\mathcal{L}$ the matrix $\mm:=M(L,a)$ defines a twisting map, we decompose the matrix $\mm$ into three summands.
Set
$$
m_i:=
     \begin{cases}
      1, & \mbox{if } i\le n=L_1 \\
      d^r, & \mbox{if } L_r< i\le L_{r+1}
     \end{cases}
$$
and define the infinite matrix $M_1$ by $(M_1)_{ij}\coloneqq \delta_{0j} m_i$. Now define the set
$$
   |L|\coloneqq \{L_t\}_{t\in \mathds{N}}=\{L_1,L_2,\dots,L_k,\dots\},
$$
and set
$$
n_i=
\begin{cases}
  a, & \mbox{if } i\in |L| \\
  0, & \mbox{otherwise}.
\end{cases}
$$
We define the infinite matrix $B$ by $B_{ij} \coloneqq \delta_{ij} n_i$. Then
 $\mm=BY+YM_1-M_1 Y$ and
 \begin{equation}\label{B se relaciona con M1}
   BYM_1+Y M_1-M_1=0.
 \end{equation}
 \begin{lemma} \label{productos parciales}
   The following equalities hold for all $k\in \mathds{N}_0$:
   \begin{enumerate}
     \item $M_1 Y^k M_1= m_k M_1$,
     \item $M_1 Y^k B=n_k M_1 Y^k$,
     \item $(B Y^k M_1)_{ij}= \delta_{0j} n_i m_{k+i}$,
     \item $(B Y^k B)_{ij}= \delta_{i+k,j}(n_i n_{i+k})$.
   \end{enumerate}
 \end{lemma}
\begin{proof}
  A straightforward computation.
\end{proof}
\begin{remark} \label{BYkB se anula}
  Note that if $L\in \mathcal{L}$ and $k,k+1\notin |L|$, then $B Y^{k+1}B=0$. In fact, by Lemma~\ref{productos parciales}(4),
  it suffices to prove that if $n_i n_{i+k+1}\ne 0$ for some $i$, then either $k\in |L|$ or $k+1\in |L|$. But $n_i n_{i+k+1}\ne 0$ implies
  $i =L_t$ and $i+k+1=L_{r+t}$ for some $r,t\in \mathds{N}$.
  Since $L$ is quasi-balanced, then either $L_{r+t}=L_t+L_r$, which implies $k+1=L_r$, or $L_{r+t}=L_t+L_r+1$,
  which implies $k=L_r$.
\end{remark}
\begin{lemma} \label{caso mixto}
  Let $L\in\mathcal{L}$. If $i\in |L|$ and $k\notin |L|$, then $m_{i+k+1}=dm_i m_k$.
\end{lemma}
\begin{proof}
  We know that $i=L_t$ for some $t\in \mathds{N}$.

  If $k<n=L_1$, then $L_t<i+k+1\le L_t+n\le L_{t+1}$, and so $m_{i+k+1}=d^{t}$.
  Since $m_i=d^{t-1}$ and $m_k=1$, this proves the result in this case.
  Else $L_r<k< L_{r+1}$ for some $r\in\mathds{N}$, and since $L$ is quasi-balanced, we have
  $$
   L_{r+t}-1\le L_t+L_r<i+k<L_t+L_{r+1}\le L_{t+r+1}.
  $$
  Hence
  $$
    L_{r+t}< i+k+1 \le L_{t+r+1},
  $$
  and so $m_{i+k+1}=d^{r+t}$. Since $m_i=d^{t-1}$ and $m_k=d^r$, this concludes the proof.
\end{proof}

\begin{lemma}
  Assume $L\in \mathcal{L}$, and let $i,k\in\mathds{N}_0$ and $r\in \mathds{N}$. Then
  \begin{align}
    & n_i m_{i+k+1}+m_k m_{i+1}-m_{k+1} m_{i} = n_k m_{i+k+1}, \label{mi ni Delta}\\
    & n_i n_{L_r+i+1}+n_{i+1} n_{L_r+i+1} = a n_{L_r+i+1}, \label{mi ni salto}\\
    & a \sum_{j=0}^{L_r} n_{i+j}  =ra^2 + n_i n_{L_r+i}. \label{mi ni sumatoria}
  \end{align}
\end{lemma}

\begin{proof}
  We first prove~\eqref{mi ni Delta} in each of the four possible cases. Note that for $k\in |L|$ we have $m_{k+1}=dm_k$, and for $k\notin |L|$ we
  have   $m_{k+1}=m_k$.\vspace{1mm}\\
  {\bf{Case $i,k\notin |L|$:}} \
   Here $n_i=n_k=0$, $m_{i+1}=m_i$, $m_{k+1}=m_k$, hence $m_k m_{i+1}-m_{k+1} m_{i}=0$ and both sides
  of~\eqref{mi ni Delta} vanish.\vspace{1mm}\\
  {\bf{Case $i\in |L|, k\notin |L|$:}}\
   Here $n_i=a$, $n_k=0$, $m_{i+1}=dm_i$, $m_{k+1}=m_k$ and by Lemma~\ref{caso mixto} we have
   $m_{i+k+1}=dm_i m_k$. Hence
   $$
     n_i m_{i+k+1}+m_k m_{i+1}-m_{k+1} m_{i}= m_im_k(ad+d-1)=0,
   $$
  and both sides
  of~\eqref{mi ni Delta} vanish.\vspace{1mm}\\
  {\bf{Case $i\notin |L|, k\in |L|$:}}\
   Here $n_i=0$, $n_k=a$, $m_{i+1}=m_i$, $m_{k+1}=dm_k$ and by Lemma~\ref{caso mixto} we have
   $dm_i m_k=m_{i+k+1}$. Hence
   $$
     n_i m_{i+k+1}+m_k m_{i+1}-m_{k+1} m_{i}= (1-d)m_i m_k=ad m_i m_k =a m_{i+k+1}=n_k m_{i+k+1},
   $$
  as desired.\vspace{1mm}\\
  {\bf{Case $i,k\in |L|$:}}\
   Here $n_i=a$, $n_k=a$, $m_{i+1}=dm_i$ and $m_{k+1}=dm_k$. Hence
   $$
     n_i m_{i+k+1}+m_k m_{i+1}-m_{k+1} m_{i}=a m_{i+k+1}=n_k m_{i+k+1},
   $$
  as desired, concluding the proof of~\eqref{mi ni Delta}\vspace{1mm}.

  Now we prove~\eqref{mi ni salto}. If $L_r+i+1\notin |L|$, then both sides vanish. If $L_r+i+1\in |L|$, then
  we have to prove that $n_i+n_{i+1}=a$. There exists $t>0$ such that $L_r+i+1=L_{r+t}$. Since \vspace{1mm} $L\in \mathcal{L}$,
  we have either $L_{r+t}=L_r+L_t$ or $L_{r+t}=L_r+L_t+1$. \vspace{1mm}\\
  In the first case $L_t=i+1$, $n_i=0$ and $n_{i+1}=a$; and in the second case $L_t=i$, $n_i=a$ and $n_{i+1}=0$. Hence in both cases
  $n_i+n_{i+1}=a$, which proves~\eqref{mi ni salto}.

  In order to prove~\eqref{mi ni sumatoria} we consider three cases.
  \begin{itemize}
      \item[-] If $i<L_1=n$, then $L_r\le L_r+i< L_{r+1}$, and so
               $$
                 \sum_{j=0}^{L_r} n_{j+i}=\sum_{s=1}^{r}n_{L_s}=ar.
               $$
               Since $n_i=0$, we obtain~\eqref{mi ni sumatoria}.
      \item[-] If $L_t<i<L_{t+1}$, then $L_{r+t}\le L_r+i< L_{r+t+1}$, since $L\in\mathcal{L}$ implies
               $$
                L_{r+t}-1\le L_r+L_t < L_r+i<L_r+L_{t+1}\le L_{r+t+1}.
               $$
                Hence
               $$
                 \sum_{j=0}^{L_r} n_{j+i}=\sum_{j=i}^{L_r+i} n_{j}=\sum_{s=t+1}^{t+r} n_{L_s}=ar,
               $$
                and using $n_i=0$, we obtain~\eqref{mi ni sumatoria} in this case.
      \item[-] If $i=L_t$ for some $t$, then $a n_{i+L_r}=n_i n_{L_r+i}$, and so it suffices to prove
                $$
                  ra=  \sum_{j=0}^{L_r-1} n_{j+i}=\sum_{j=L_t}^{L_r+L_t-1} n_j.
                $$
                But $L\in \mathcal{L}$ implies
                $$
                L_{r+t-1}\le L_r+L_{t-1}+1\le L_r+L_t-1 < L_{r+t},
                $$
                and so
                $$
                \sum_{j=L_t}^{L_r+L_t-1} n_j=\sum_{s=0}^{r-1}n_{L_{t+s}}=ra,
                $$
                as desired.
  \end{itemize}
  Thus~\eqref{mi ni sumatoria} holds in all cases, concluding the proof.
\end{proof}

In order to verify the conditions of Proposition~\ref{lema conditions to prove BNL} we need to compute
\begin{equation} \label{expresion primera de mm Yk mm}
\mm Y^k \mm= \mm Y^k BY+ \mm Y^{k+1}M_1 -\mm Y^{k}M_1 Y,
\end{equation}
and so we have to compute $\mm Y^k M_1$.

\begin{lemma} \label{mm Yk M1}
  If $L\in \mathcal{L}$, then we have
  $$
    \mm Y^k M_1=n_k Y^{k+1} M_1.
  $$
\end{lemma}

\begin{proof}
  We have
  $$
    \left(n_k Y^{k+1} M_1\right)_{ij}=\delta_{0j}n_k m_{i+k+1}
  $$
  and by Lemma~\ref{productos parciales} and equality~\eqref{mi ni Delta}, we also have
  \begin{align*}
    \left(\mm Y^k M_1\right)_{ij}&=\left(BY^{k+1}M_1+YM_1 Y^k M_1-M_1 Y^{k+1} M_1\right)_{ij} \\
    & =    \delta_{0j}\left(n_i m_{i+k+1}+m_k m_{i+1}-m_{k+1} m_{i}\right)\\
    &= \delta_{0j}n_k m_{i+k+1},
  \end{align*}
  as desired.
\end{proof}

\begin{proposition} \label{expresion de mm Yk mm}
  If $L\in \mathcal{L}$, then
  $$
   \mm Y^k \mm=BY^{k+1}BY+ n_k Y M_1 Y^{k+1}-n_{k+1}M_1 Y^{k+2}+n_{k+1}Y^{k+2}M_1-n_k Y^{k+1} M_1 Y.
  $$
\end{proposition}

\begin{proof}
  By Lemma~\ref{mm Yk M1} and equality~\eqref{expresion primera de mm Yk mm} we have
  $$
   \mm Y^k \mm=\mm Y^{k}BY+n_{k+1}Y^{k+2}M_1-n_k Y^{k+1} M_1 Y.
  $$
  From Lemma~\ref{productos parciales}(2) we obtain
  $$
    \mm Y^{k}BY=B Y^{k+1}B Y+Y M_1 Y^k BY-M_1 Y^{k+1}BY=BY^{k+1}BY+ n_k Y M_1 Y^{k+1}-n_{k+1}M_1 Y^{k+2},
  $$
  which concludes the proof.
\end{proof}

\begin{proposition}\label{LL es suficiente para BnL}
  For each $a\in K\setminus\{0,-1\}$ and $L\in\mathcal{L}$, the matrix $\mm=M(L,a)$ defines a twisting map via
  Remark~\ref{condiciones de corolario pero para mm}.
\end{proposition}

\begin{proof}
  Since $\mm_{0j}=\delta_{0j}-\delta_{1j}$ and $\mm_{kj}=0$ for $j>k+1$, by Remark~\ref{condiciones de corolario pero para mm} and
  Propositions~\ref{lema conditions to prove BNL} and~\ref{corolario conditions to prove BNL}, it suffices to check the
conditions~(1)--(3) in Proposition~\ref{lema conditions to prove BNL}.

  If $k,k+1\notin |L|$, then by Proposition~\ref{expresion de mm Yk mm} we know that $\mm Y^{k}\mm=BY^{k+1}BY$. By
  Remark~\ref{BYkB se anula} we also know that $BY^{k+1}B=0$ for $k,k+1\notin |L|$, which proves item~(1).

  In order to prove item~(2), we use Proposition~\ref{expresion de mm Yk mm} and compute
  \begin{align*}
    \mm Y^{L_r}\mm   &= B Y^{L_r+1} B +a Y M_1 Y^{L_r+1}-a Y^{L_r+1}M_1 Y, \\
    Y \mm Y^{L_r-1}\mm &= Y B Y^{L_r}B Y-a Y M_1 Y^{Lr+1}+a Y^{L_r+2} M_1, \\
    a Y^{L_r+1} \mm &= a Y^{L_r+1} BY+ a Y^{L_r+2} M_1 -a Y^{L_r+1}M_1 Y.
  \end{align*}
  So we have to prove
  $$
    BY^{L_r+1}B+Y B Y^{L_r}B=a Y^{L_r+1}B.
  $$
  We have
  \begin{align*}
   (BY^{L_r+1}B)_{ij} & =\sum_k B_{i,k}B_{k+L_r+1,j}=\sum_k \delta_{ik}\delta_{k+L_r+1,j} n_i n_{k+L_r+1}=\delta_{i+L_r+1,j}n_i n_{i+L_r+1}, \\
   (YBY^{L_r}B)_{ij} & =\sum_k B_{i+1,k} B_{k+L_r,j}=\sum_k \delta_{i+1,k}\delta_{k+L_r,j} n_{i+1} n_{k+L_r}=\delta_{i+L_r+1,j}n_{i+1} n_{i+L_r+1}, \\
   (a Y^{L_r+1}B)_{ij} & = aB_{i+L_r+1,j}=a \delta_{i+L_r+1,j} n_{i+L_r+1},
  \end{align*}
  and so~\eqref{mi ni salto} concludes the proof of item~(2).

  In order to prove item~(3), we compute
  \begin{align*}
    a \sum_{i=0}^{L_r} Y^i \mm Y^{L_r-i} & = a \sum_{i=0}^{L_r} Y^{i} B Y^{L_r+1-i}+a \sum_{i=0}^{L_r} Y^{i+1} M_1 Y^{L_r-i}
    -a \sum_{i=0}^{L_r} Y^i M_1 Y^{L_r+1-i} \\
    &= a \sum_{i=0}^{L_r} Y^{i} B Y^{L_r+1-i}+ a Y^{L_r+1}M_1-a M_1 Y^{L_r+1}.
  \end{align*}
  Since by Proposition~\ref{expresion de mm Yk mm} we know that
  $$
    \mm Y^{L_r-1}\mm =B Y^{L_r}BY-a M_1 Y^{L_r+1}+a Y^{L_r+1}M_1,
  $$
  we have to prove
  $$
    a \sum_{s=0}^{L_r} Y^s B Y^{L_r+1-s}= B Y^{L_r}BY +ra^2 Y^{L_r+1}.
  $$
  But
  \begin{align*}
    (Y^s B Y^{L_r+1-s})_{ij} &= B_{s+i,j+s-L_r-1}=\delta_{i,j-L_r-1} n_{s+i}, \\
    (BY^{L_r}BY)_{ij} &= \sum_k B_{i,k} B_{k+L_r,j-1} =\sum_k \delta_{ik}\delta_{k+L_r,j-1} n_i n_{k+L_r}=n_i n_{i+L_r}\delta_{i+L_r,j-1}, \\
    ra^2 (Y^{L_r+1})_{ij} &= ra^2 \delta_{i,j-L_r-1},
  \end{align*}
  hence it suffices to prove
  $$
    a\sum_{s=0}^{L_r} n_{s+i}=n_i n_{i+L_r}+ra^2,
  $$
  which holds by~\eqref{mi ni sumatoria}. This concludes the proof.
\end{proof}

\begin{remark}
  Our strategy contains two main components.  On one hand
  the approach of equalities of infinite matrices yields conditions that reduce the possibilities to very few families.
  Even in the complicated case ~(2) of  Proposition~\ref{clasificacion}  one can achieve the classification of
  all possible twisting maps up to any degree, with increasing amount of computational work.
  On the other hand
  proving that a given infinite matrix yields a twisting map requires to verify an infinite number of matrix equalities for infinite matrices.
  We are able to realize this difficult task in Corollary~\ref{Corolario anda} and in Proposition~\ref{LL es suficiente para BnL}.
  In the first case we only have to prove one of the equalities,
  since those twisting maps have the $n$-extension property, i.e., they
  are completely determined by the values of $\mm_{k,*}$ for $k\le n$. In the case of Proposition~\ref{LL es suficiente para BnL} we manage to
  decompose the infinite matrix into three simpler ones, and we prove the required matrix equalities using properties of these simpler matrices.

  Notice that none of the twisting maps constructed in Proposition~\ref{LL es suficiente para BnL} has the $m$-extension property for any $m$.
  This is a direct consequence of the
  following property of quasi-balanced sequences:

  Let $L_{\le r}=(L_1,\dots,L_r)$ be a quasi-balanced partial sequence. Then there exists an extension of $L_{\le r}$ of the form
  $(L_1,\dots,L_r,\dots,L_{r+k})$ such that
  both
  $$
    (L_1,\dots,L_r,\dots,L_{r+k},L_{r+k}+n)\quad\text{and}\quad (L_1,\dots,L_r,\dots,L_{r+k},L_{r+k}+n+1)
  $$
  are quasi-balanced partial sequences.
\end{remark}

In a forthcoming article this property will be proven, together with several other properties of these sequences. For example,
the quasi-balanced sequences show a surprising connection to Euler's totient function and so they are interesting on its own.

There are several open problems related to the results of this article, we want to highlight two of them:
\begin{enumerate}
  \item In computations not shown in this paper we have found 16 different cases for the first $4n-1$ rows of the matrices
  $\mm$ corresponding to the case~(2) of Proposition~\ref{clasificacion}, and 4 of these cases correspond to
  twisting maps of the family $B(a,L)$.
  Does there exist any twisting map corresponding to any of the other 12 cases?
  \item Does any twisting map related to the case~(2) of Proposition~\ref{clasificacion} has the $m$-extension property for any $m$?
\end{enumerate}

\noindent \textbf{Acknowledgement.} We thank the anonymous referee for the thorough revision and numerous helpful suggestions.

\begin{bibdiv}
\begin{biblist}

\bib{A}{article}{
author={Arce, Jack},
   title={Representations of twisted tensor products},
   eprint={	arXiv:1505.01232  [math.RA]},
   }

\bib{AGGV}{article}{
   author={Arce, Jack},
   author={Guccione, Jorge A.},
   author={Guccione, Juan J.},
   author={Valqui, Christian},
   title={Twisted tensor products of $K^n$ with $K^m$},
   journal={Algebr. Represent. Theory},
   volume={22},
   date={2019},
   number={6},
   pages={1599--1651},
   issn={1386-923X},
   review={\MR{4034796}},
   doi={10.1007/s10468-018-9833-1},
}

\bib{CSV}{article}{
   author={Cap, Andreas},
   author={Schichl, Hermann},
   author={Van{\v{z}}ura, Ji{\v{r}}{\'{\i}}},
   title={On twisted tensor products of algebras},
   journal={Comm. Algebra},
   volume={23},
   date={1995},
   number={12},
   pages={4701--4735},
   issn={0092-7872},
   review={\MR{1352565}},
   doi={10.1080/00927879508825496},
}

\bib{C}{article}{
   author={Cibils, Claude},
   title={Non-commutative duplicates of finite sets},
   journal={J. Algebra Appl.},
   volume={5},
   date={2006},
   number={3},
   pages={361--377},
   issn={0219-4988},
   review={\MR{2235816}},
   doi={10.1142/S0219498806001776},
}

\bib{CG}{article}{
   author={Conner, Andrew},
   author={Goetz, Peter},
   title={The Koszul property for graded twisted tensor products},
   journal={J. Algebra},
   volume={513},
   date={2018},
   pages={50--90},
   issn={0021-8693},
   review={\MR{3849878}},
   doi={10.1016/j.jalgebra.2018.07.030},
}

\bib{CG2}{article}{
   author={Conner, Andrew},
   author={Goetz, Peter},
   title={Classification, Koszulity and Artin-Schelter regularity of certain
   graded twisted tensor products},
   journal={J. Noncommut. Geom.},
   volume={15},
   date={2021},
   number={1},
   pages={41--78},
   issn={1661-6952},
   review={\MR{4248207}},
   doi={10.4171/jncg/395},
}

\bib{GGV1}{article}{
   author={Guccione, Jorge A.},
   author={Guccione, Juan J.},
   author={Valqui, Christian},
   title={Twisted planes},
   journal={Comm. Algebra},
   volume={38},
   date={2010},
   number={5},
   pages={1930--1956},
   issn={0092-7872},
   review={\MR{2642035}},
   doi={10.1080/00927870903023105},
}

\bib{GGV2}{article}{
   author={Guccione, Jorge A.},
   author={Guccione, Juan J.},
   author={Valqui, Christian},
   title={Non commutative truncated polynomial extensions},
   journal={J. Pure Appl. Algebra},
   volume={216},
   date={2012},
   number={11},
   pages={2315--2337},
   issn={0022-4049},
   review={\MR{2927170}},
   doi={10.1016/j.jpaa.2012.01.021},
}

\bib{JLNS}{article}{
   author={Jara, P.},
   author={L{\'o}pez Pe{\~n}a, J.},
   author={Navarro, G.},
   author={{\c{S}}tefan, D.},
   title={On the classification of twisting maps between $K^n$ and $K^m$},
   journal={Algebr. Represent. Theory},
   volume={14},
   date={2011},
   number={5},
   pages={869--895},
   issn={1386-923X},
   review={\MR{2832263}},
   doi={10.1007/s10468-010-9222-x},
}

\bib{LN}{article}{
   author={L{\'o}pez Pe{\~n}a, Javier},
   author={Navarro, Gabriel},
   title={On the classification and properties of noncommutative duplicates},
   journal={$K$-Theory},
   volume={38},
   date={2008},
   number={2},
   pages={223--234},
   issn={0920-3036},
   review={\MR{2366562}},
   doi={10.1007/s10977-007-9017-y},
}

\end{biblist}
\end{bibdiv}
\end{document}